\newcommand{\numberset}{\mathbb}
\newcommand{\R}{\numberset{R}}
\newcommand{\F}{\numberset{F}}
\newcommand{\I}{\numberset{I}}
\newcommand{\PP}{\numberset{P}}
\newcommand{\TT}{\numberset{T}}
\newcommand{\VV}{\numberset{V}}
\DeclareFontFamily{U}{matha}{\hyphenchar\font45}
\DeclareFontShape{U}{matha}{m}{n}{
	<-6> matha5 <6-7> matha6 <7-8> matha7
	<8-9> matha8 <9-10> matha9
	<10-12> matha10 <12-> matha12
}{}
\DeclareSymbolFont{matha}{U}{matha}{m}{n}
\DeclareFontFamily{U}{mathx}{\hyphenchar\font45}
\DeclareFontShape{U}{mathx}{m}{n}{
	<-6> mathx5 <6-7> mathx6 <7-8> mathx7
	<8-9> mathx8 <9-10> mathx9
	<10-12> mathx10 <12-> mathx12
}{}
\DeclareSymbolFont{mathx}{U}{mathx}{m}{n}
\DeclareMathDelimiter{\vvvert} {0}{matha}{"7E}{mathx}{"17}%
\newcommand{\norm}[1]{\left\lVert#1\right\rVert}
\newcommand{\abs}[1]{\left\lvert#1\right\rvert}
\newtheorem{teo}{Theorem}
\newtheorem{pro}{Problem}
\newtheorem{lem}{Lemma}
\newtheorem{prop}{Proposition}
\newtheorem{rem}{Remark}
\newtheorem{assump}{Assumption}
\let\div\undefined\DeclareMathOperator{\div}{div} 
\let\curl\undefined\DeclareMathOperator{\curl}{curl} 
\DeclareMathOperator*{\Grad}{\boldsymbol\nabla}
\DeclareMathOperator*{\Grads}{{\underline{\boldsymbol{\varepsilon}}}}
\DeclareMathOperator{\grads}{\boldsymbol\nabla_s}
\newcommand{\derivative}[2]{\frac{\partial #1}{\partial #2}}
\newcommand\Huo{\mathbf{H}^1_0(\Omega)}
\newcommand\Ldo{L^2_0(\Omega)} 
\newcommand\Hub{\mathbf{H}^1(\B)} 
\newcommand\HuOm{\mathbf{H}^1(\Omega)} 
\newcommand\Hubd{(\Hub)^\prime} 
\newcommand\LdBd{\mathbf{L}^2(\B)} 
\newcommand\LdOd{\mathbf{L}^2(\Omega)} 
\newcommand\Winftyd{\mathbf{W}^{1,\infty}(\B)}
\newcommand\Oft{\Omega^f_t} 
\newcommand\Ost{\Omega^s_t} 
\newcommand\Of{\Omega^f} 
\newcommand\Os{\Omega^s} 
\newcommand\B{\mathcal B} 
\renewcommand\u{\mathbf{u}} 
\renewcommand\a{\mathbf{a}} 
\renewcommand\v{\mathbf{v}} 
\renewcommand\d{\mathbf{d}} 
\renewcommand\c{\mathbf{c}} 
\newcommand\f{\mathbf{f}} 
\newcommand\g{\mathbf{g}} 
\newcommand\n{\mathbf{n}} 
\newcommand\w{\mathbf{w}} 
\newcommand\x{\mathbf{x}} 
\newcommand\s{\mathbf{s}} 
\renewcommand\S{\mathbf{S}} 
\newcommand\V{\mathcal{V}} 
\newcommand\U{\mathcal{U}} 
\newcommand\Bil{\mathcal{L}} 
\newcommand\X{\mathbf{X}} 
\newcommand\Xbar{\overline{\X}} 
\newcommand\Y{\mathbf{Y}} 
\newcommand\Z{\mathbf{Z}} 
\newcommand\LL{{\boldsymbol{\Lambda}}} 
\newcommand\Vline{\mathbf{V}} 
\newcommand\Sline{\S} 
\newcommand\ssigma{\boldsymbol\sigma} 
\newcommand\llambda{\boldsymbol\lambda} 
\newcommand\mmu{\boldsymbol\mu} 
\newcommand\ds{\mathrm{d}\s} 
\newcommand\dx{\mathrm{d}\x} 
\newcommand\dt{\Delta t} 
\newcommand\dr{\delta\rho} 
\newcommand\T{\mathcal{T}} 
\newcommand\quadnode{\mathbf{p}}
\newcommand\quadweigth{\omega}
\newcommand\Pcal{\mathcal{P}}
\newcommand\tri{\mathrm{T}} 
\newcommand\tris{\tri_\mathrm{s}} 
\newcommand{\Amatr}{\mathsf{A}}
\newcommand{\Bmatr}{\mathsf{B}}
\newcommand{\Cmatr}{\mathsf{C}}
\newcommand{\Sys}{\mathcal{A}}
\newcommand{\cond}[1]{\mathsf{cond}(#1)}
\newcommand\LG{\begin{color}{black}}     
\newcommand\GL{\end{color}} 
\newcommand\fc{\begin{color}{black}}     
\newcommand\cf{\end{color}}
\newcommand{\FC}{\begin{color}{black}}
\newcommand{\CF}{\end{color}}
\renewcommand\lg{\begin{color}{black}}
	\newcommand\gl{\end{color}}
\newcommand\blue{\begin{color}{black}}
	\newcommand\noblue{\end{color}}
\newcommand\red{\begin{color}{black}}
	\newcommand\nored{\end{color}}
\newcommand\DB{\begin{color}{black}}
	\newcommand\BD{\end{color}}
\begin{document}

	\title[Conditioning of a fictitious domain method for FSI]{On the stability and conditioning\\of a fictitious domain formulation\\for fluid-structure interaction problems}
	
	\author[Daniele Boffi]{Daniele Boffi}
	\address{Computer, Electrical and Mathematical Sciences and Engineering Division, King Abdullah University of Science and Technology, Thuwal 23955, Saudi Arabia and Dipartimento di Matematica \textquoteleft F. Casorati\textquoteright, Universit\`a degli Studi di Pavia, via Ferrata 5, 27100, Pavia, Italy}
	\email{daniele.boffi@kaust.edu.sa}
	\urladdr{kaust.edu.sa/en/study/faculty/daniele-boffi}
	
	\author{Fabio Credali}
	\address{Computer, Electrical and Mathematical Sciences and Engineering Division, King Abdullah University of Science and Technology, Thuwal 23955, Saudi Arabia}
	\email{fabio.credali@kaust.edu.sa}
	\urladdr{cemse.kaust.edu.sa/profiles/fabio-credali}
	
	\author{Lucia Gastaldi}
	\address{Dipartimento di Ingegneria Civile, Architettura, Territorio, Ambiente e di Matematica, Universit\`a degli Studi di Brescia, via Branze 43, 25123, Brescia, Italy}
	\email{lucia.gastaldi@unibs.it}
	\urladdr{lucia-gastaldi.unibs.it}
	
	\begin{abstract}

\fc We consider a distributed Lagrange multiplier formulation for
fluid-structure interaction problems in the spirit of the fictitious
domain approach. This is an unfitted method, which does not require the construction of meshes conforming to the interface. We focus on the stationary problem arising from the time discretization and we analyze the behavior of
the condition number with respect to mesh refinement. At the numerical level, the computation of the term coupling the fluid and the solid mesh requires the knowledge of the intersection between fluid and mapped solid elements and it might happen that a portion of the intersected elements is very small. We show that our formulation is stable independently of such intersections and that the conditioning is not affected by the
interface position.\cf

\

\noindent\textbf{Keywords:} Fluid-structure interactions, Fictitious domain, Stability, Condition number, Non-matching grids.

\

\noindent\textbf{Mathematics Subject Classification:} 65N30, 65N12, 74F10
		
\end{abstract}

	\maketitle	
	
	\section{Introduction}
	
	The scientific community has a large interest in developing effective methods for solving partial differential equations modeling multi-physics and multi-bodies problems. Such equations are often defined on time dependent domains characterized by complex interfaces. In our research, we focus on fluid-structure interaction problems, which have several applications in biology, medicine, structural mechanics, and several other fields.
	
	The discretization of these problems is challenging since it requires an accurate representation of the underlying geometry. To this aim, several methods have been developed during the last decades. They can be classified as \textit{fitted} or \textit{unfitted} depending on the fact that the computational mesh is aligned or not with the interface.
	\fc Among fitted approaches, the most popular is the Arbitrary Lagrangian--Eulerian (ALE)~\cite{hirt1974arbitrary,hughes1981lagrangian,donea1982arbitrary}. Fluid and solid domains are discretized by two meshes evolving around a shared interface. Despite the advantage of having the kinematic constraints imposed by construction, strong mesh distortions may appear. \cf
	Unfitted approaches may overcome \fc this issue\cf. In this case, fluid and solid are discretized independently, with meshes not fitting at the interface. Thus, the complexity of mesh generation is minimized. \fc In some cases, \cf this comes at the price of reducing the accuracy of the scheme unless special techniques are used. \fc Among unfitted approaches for FSI and multiphysics problems\cf, we mention, for instance, the level set method~\cite{SUSSMAN1994146,chang1996level}, the immersed boundary method~\cite{peskin1972flow,peskin2002immersed}, the fictitious domain approach~\cite{glowinski1997lagrange,glowinski2001fictitious,yu2005dlm}, \fc the immersogeometric method\cf~\cite{iga-heart}, the shifted boundary method~\cite{sbm-flow}, the fat boundary method~\cite{fat-interface}, and, finally, the Nitsche
	method. \FC Within the family of Nitsche-type methods, we find schemes based on a fictitious domain approach as in~\cite{burman1,burman2}, the CutFEM~\cite{wadbro2013uniformly,
		HANSBO201490, burman2015cutfem, cutStokes}, the Finite Cell
	Method~\cite{finitecell1, finitecell2, finitecell3} and the Nitsche--XFEM~\cite{alauzet2016nitsche, burman2014unfitted}.\CF
	
	\FC
	Our approach falls within the class of unfitted methods for FSI. It originated from the finite element immersed boundary method~\cite{pesk} and further evolved into a fictitious domain method based on the use of a Lagrange multiplier~\cite{2015,stat}. 
	\CF
	We consider the simulation of fluid-structure interaction problems which couple an incompressible Newtonian fluid with an incompressible visco-elastic solid body. This setting is motivated by the fact that many biological tissues satisfy this assumption, see e.g.~\cite{peskin1972flow}. Actually, our technique can be extended also to the case of compressible solids as in~\cite{compressible}. See also~\cite{heltai-costanzo}, where the finite element immersed boundary method is generalized to an incompressible Newtonian fluid interacting with general hyper-elastic structures.
	
	The main idea is to fictitiously extend the fluid domain to the region occupied by the immersed structure
	so that we solve the Navier--Stokes equations using the Eulerian framework in a fixed domain.
	For the solid deformation we use the standard Lagrangian framework and solve the governing elasticity equation on a reference domain mapped, at each time instant, into the
	actual configuration. 
	A  distributed Lagrange multiplier is introduced to deal with the kinematic constraint describing the interaction
	between fluid and solid at variational level.
\fc
The fluid and the solid equations are then coupled by additional
terms depending on the Lagrange multiplier.
In particular, in the fluid equation, this term involves functions defined on the two domains,
hence we refer to it as \emph{coupling term}.

At the computational level, the main advantage of our approach is that we can
use totally independent meshes which do not evolve in time. However, the
computation of the coupling term requires to integrate on the elements of the
reference mesh mapped on the background fluid mesh.
	Such computation requires the knowledge of the intersections between the fluid mesh
	and the mapped solid one. Hence, very small cells contained in the support of both fluid and mapped solid basis functions may originate. We refer to them as \emph{small cut cells}. Then, the matrix associated with
	the coupling term can  be evaluated exactly by employing a
	quadrature rule on each intersected element. An inexact procedure can
	also be considered, using a single quadrature rule on each mapped element.
	In this case, it might happen that no quadrature nodes fall inside a
	small cut cell.  
	We estimated the error arising from the use of inexact quadrature 
	in our recent works~\cite{boffi2022interface,BCG24}. In this paper, we discuss the effect of the presence of small cut cells on the stability and conditioning of the considered formulation. \FC On the other hand, in the case of Nitsche-type \CF methods,
the term \emph{cut cell} refers to a mesh element which is cut by the
interface and it might happen that one portion of the cut cell is small. It is
known that this fact can produce instabilities in the solution, as well as ill-conditioning, unless stabilization/penalization terms, such as the ghost penalty technique~\cite{burman-ghost}, are added to the variational formulation.
	
	In this work, we recall the main features of our distributed Lagrange
	multiplier formulation and we prove upper bounds for the condition number, depending only on the choice of the coupling term and not on the integration technique or
	the interface position. 
	On the other hand, the inf-sup stability is guaranteed at the discrete
	level without the need for any artificial penalization term dealing with
	small cut cells. \cf
	
	Our theoretical results on the stability and conditioning are confirmed by
	extensive numerical investigations. 
	\fc A final numerical test involving a time dependent problem shows that the method is unconditionally stable in time, independently of the presence of small cut cells.\cf
	
	\FC
	The paper is organized as follows. After recalling the functional analysis notation, we derive our fictitious domain formulation starting from the physical background (see Section~\ref{sec:pb_setting}). In Section~\ref{sec:time}, we recall the time semi-discretization and the related stability properties, whereas computational aspects regarding the coupling term are recalled in Section~\ref{sec:coupling_term}. We prove the upper bounds for the condition number in Section~\ref{sec:conditioning}. Our numerical investigation is finally presented in Section~\ref{sec:tests}. The derivation of the model and the results of Sections~\ref{sec:time} and \ref{sec:coupling_term} are essentially known. We summarized them here since we need them for our analysis of Section~\ref{sec:conditioning} with the intention of being self-contained.\CF
	
	\
	
	\textbf{Notation.} Let $D\subset\R^2$ be an open bounded domain. The space of square-integrable functions on $D$ is denoted by $L^2(D)$ and endowed with the scalar product $(\cdot,\cdot)_D$, inducing the norm $\|\cdot\|_{0,D}$. We denote by~$L^2_0(D)$ the subspace of zero mean valued functions. Sobolev spaces are denoted by $W^{s,p}(D)$, with $s\in\R$ being the differentiability exponent and $p\in[1,\infty]$ the integrability exponent. When $p=2$, we adopt the notation ${H^s(D)=W^{s,2}(D)}$. The space $H^s(D)$ is endowed with the norm $\|\cdot\|_{s,D}$. Finally, the space $H^1_0(D)\subset H^1(D)$ contains functions with zero trace on $\partial D$. Vector spaces and vector/tensor valued functions are denoted by boldface letters. 
	
	\section{Problem setting}\label{sec:pb_setting}
	
	We focus on fluid-structure interaction problems consisting of an elastic body immersed in a Newtonian fluid. The configuration of the system is described by two regions $\Ost$ and $\Oft$ representing the position of solid and fluid at time $t$, respectively, \fc and sharing a common interface $\partial\Ost\cap\partial\Oft$\cf. More precisely, both $\Ost$ and $\Oft$ can be either two- or three-dimensional domains.  We point out that our approach can also deal with codimension one (see e.g.~\cite{2015,annese}) and codimension two solids (see e.g.~\cite{ALZETTA,heltaizunino}). We then introduce a third domain $\Omega=\Ost\cup\Oft$, acting as a container for the system and independent of time. We assume \fc that the solid domain $\Ost$ is totally immersed in the fluid one, so that its closure does not touch the boundary of the container~$\partial\Omega$. Therefore, the interface $\partial\Ost\cap\partial\Oft$ coincides with $\partial\Ost$.\cf
	
	The fluid is described in the Eulerian framework, whereas the solid deformation is described in the Lagrangian setting. Hence, we introduce a solid reference domain $\B$; at each time instant, $\B$ is mapped into the actual position of the body $\Ost$ through the action of a map $\X:\B\rightarrow\Ost$. Thus, $\X$ represents the position of the solid and each point $\x\in\Ost$ can be expressed in terms of $\s$ as~$\x = \X(\s,t)$.
	
	We denote by $\F$ the deformation gradient $\grads\X$, and $|\F|$ its determinant. In case of incompressible materials, $|\F|$ is constant during time. In particular, $|\F|=1$ when the reference domain $\B$ coincides with the initial configuration of the immersed body. In addition, the time derivative $\partial\X/\partial t$ is equal to the material velocity $\u^s$ of the solid, i.e.
	\begin{equation}\label{eq:kinematik_eq_solid}
		\derivative{\X}{t}(\s,t) = \u^s(\x,t)\qquad\text{for }\x=\X(\s,t).
	\end{equation}
	An example of configuration is sketched in Figure~\ref{fig:geo}.
	\begin{figure}
		\centering
		\begin{tikzpicture}
			\draw[fill=cyan!20, thick] (0,0) -- (4,0) -- (4,4) -- (0,4) -- (0,0);
			\draw (4.5,3.7) node {$\Omega$};
			\draw (0.5,3.7) node {$\Oft$};	
			
			\draw (-4.5,3.7) node {$\B$};
			\coordinate (c1) at (-4.5,0.5);
			\coordinate (c2) at (-3.5,0.7);
			\coordinate (c3) at (-2,1.8);
			\coordinate (c4) at (-2,3);
			\coordinate (c5) at (-3,3.5);
			\coordinate (c8) at (-4.5,3);
			\coordinate (c7) at (-4.7,2);
			\coordinate (c6) at (-4.8,1);
			\draw[fill = yellow!30, thick] plot [smooth cycle] coordinates {(c1) (c2) (c3) (c4) (c5) (c8) (c7) (c6)};			
			
			\coordinate (c1) at (1.5,0.5);
			\coordinate (c2) at (2.5,0.7);
			\coordinate (c3) at (3,1.8);
			\coordinate (c4) at (3,3);
			\coordinate (c5) at (2,3.5);
			\coordinate (c8) at (1.5,3);
			\coordinate (c7) at (1,2);
			\coordinate (c6) at (0.6,1);
			\draw[fill = yellow!30, thick] plot [smooth cycle] coordinates {(c1) (c2) (c3) (c4) (c5) (c8) (c7) (c6)};
			\draw (2.2,3) node {$\Ost$};
			
			\draw (-3.2,2.3) node {$\s$};
			\draw (2.1,2) node {$\x$};
			
			\draw (-1.1,2.4) node {$\X(\s,t)$};
			
			\draw[->,thick,violet] (-3,2.3)  .. controls (0,2)  ..  (1.9,2);
		\end{tikzpicture}
		\caption{Our geometrical setting. $\Omega$ and $\B$ are fixed domains, independent of time. A Lagrangian point $\s\in\B$ is mapped into $\x\in\Ost$ through the map $\X$.}
		\label{fig:geo}
	\end{figure}
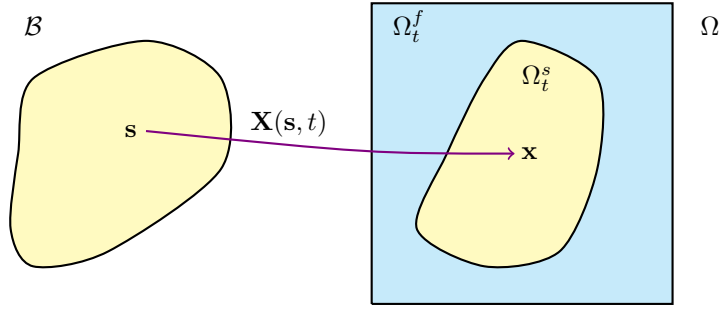
	
	Assuming that both fluid and solid materials are incompressible, the equations governing the evolution of our system are
	\begin{equation}\label{eq:strong}
		\begin{aligned}
			&\rho_f\bigg(\derivative{\u^f}{t}+\u^f\cdot\nabla\u^f\bigg)=\div\ssigma^f&&\quad\text{in }\Oft\\
			&\div\u^f = 0&&\quad\text{in }\Oft\\
			&\rho_s\frac{\partial^2\X}{\partial^2 t} = \div_\s\TT &&\quad\text{in }\B\\ 
			&|\F| = 1 &&\quad\text{in }\B.
		\end{aligned}
	\end{equation}
	Here, $\u^f$ is the fluid velocity, while $\rho_f,\,\rho_s$ are fluid and solid densities. The symbols $\ssigma_f$ and $\TT$ denote the Cauchy stress tensors for fluid and solid, respectively. More precisely, $\ssigma^f$ is given by
	\begin{equation*}
		\ssigma^f = -p^f \I + \nu_f\Grads(\u^f),
	\end{equation*}
	where $\nu_f>0$ is the fluid viscosity, $p^f$ the pressure, and $\Grads(\u^f)=(\Grad\u^f+(\Grad\u^f)^\top)/2$ is the symmetric gradient.
	
	We consider a visco-elastic solid material; the associated Cauchy stress tensor is composed of two terms
	\begin{equation}
		\TT = |\F| \ssigma_v^s \F^{-\top} + \PP\qquad\text{with }\ssigma_v^s = -p^s \I + \nu_s\Grads(\u^s),
	\end{equation}
	where $\ssigma_v^s$ represents the viscous contribution, while $\u^s$, $p^s$ are velocity and pressure of the solid, respectively, and $\nu_s>0$ denotes the viscosity. The pressure $p^s$ is the Lagrange multiplier associated to the incompressibility condition. $\PP$ is the first Piola--Kirchhoff stress tensor, which can be expressed in terms of a potential energy density~$W(\F,\s,t)$ as $\PP(\F,\s,t) = \displaystyle\derivative{W}{\F}(\F,\s,t)$.
	
	The equations in~\eqref{eq:strong} are completed by suitable interface, boundary, and initial conditions. Velocity and Cauchy stress are imposed to be continuous at the interface by the transmission conditions
	\begin{equation*}
		\begin{aligned}
			&\u^f = \u^s && \text{ on } \partial\Ost\\
			&\ssigma_f\n_f = - \big(|\F|^{-1}\TT\F^\top)\n_s && \text{ on } \partial\Ost,
		\end{aligned}
	\end{equation*}
	where $\n_s$ and $\n_f$ are the outer unit normal vectors to $\Ost$ and $\Oft$, respectively. We finally add homogeneous boundary condition for the fluid velocity $\u^f =0$ on $\partial\Omega$ and the initial conditions
	\begin{equation*}
		\u^f(0) = \u_0^f\ \text{ in }\Of_0,
		\qquad
		\u^s(0) = \u_0^s\ \text{ in }\Os_0,
		\qquad
		\X(0) = \X_0\ \text{ in } \B.
	\end{equation*}
	
	In order to present the fictitious domain formulation of the above equations, we extend the fluid variables to the entire $\Omega$ incorporating the solid domain
	\begin{equation}
		\u = \begin{cases}
			\u^f \quad\text{ in }\Oft\\
			\u^s \quad\text{ in }\Ost,
		\end{cases}
		\qquad
		p = \begin{cases}
			p^f \quad\text{ in }\Oft\\
			p^s \quad\text{ in }\Ost.
		\end{cases}
	\end{equation}
	Since $\u$ must coincide with $\u^s$ in the solid region $\Ost$, we impose the following kinematic constraint in $\B$ corresponding to~\eqref{eq:kinematik_eq_solid}
	\begin{equation}\label{eq:kinematic_fictitious}
		\derivative{\X}{t}(\s,t) = \u(\X(\s,t),t) \quad \text{ for } \s\in\B.
	\end{equation}
	We enforce~\eqref{eq:kinematic_fictitious} weakly by means of a distributed Lagrange multiplier. To this aim, we consider a suitable Hilbert space $\LL$ and we introduce a continuous bilinear form $\c:\LL\times\Hub\rightarrow\R$ with the property
	\begin{equation}
		\c(\mmu,\Z) = 0 \quad\forall\mmu\in\LL \quad\text{implies}\quad \Z=0.
	\end{equation}
	Two possible choices of $\LL$ and $\c$ are used in our model. We can set $\LL_0=\Hubd$, i.e. the dual space of $\Hub$, and define $\c_0$ as the duality pairing
	\begin{equation}\label{eq:c_dual}
		\c_0(\mmu,\Y) = \langle \mmu,\Y \rangle \quad \forall\mmu\in\LL_0,\;\forall\Y\in\Hub,
	\end{equation}
	or we can choose $\LL_1=\Hub$ and set $\c_1$ as the inner product in $\Hub$
	\begin{equation}\label{eq:c_h1}
		\c_1(\mmu,\Y) = (\mmu,\Y)_\B + (\grads\mmu,\grads\Y)_\B \quad \forall\mmu\in\LL_1,\,\forall\Y\in\Hub.
	\end{equation}
	
	From now on, we will use the generic notation $(\LL,\c)$ and we specify
	$(\LL_0,\c_0)$ or $(\LL_1,\c_1)$ when necessary.
	Finally, the weak formulation of our problem reads as follows (see~\cite{2015} for its full derivation).
	
	\begin{pro}
		\label{pro:problem_fictitious}
		Given $\u_0\in\Huo$ and $\X_0:\B\longrightarrow\Omega$, $\forall t\in (0,T)$, find $(\u(t),p(t))\in\Huo \times \Ldo$, $\X(t)\in \Winftyd$ and $\llambda(t)\in \LL$, such that for almost every $t\in(0,T)$ it holds
		\begin{subequations}
			\begin{align}
				&\notag \rho_f\frac{\partial}{\partial t}(\u(t),\v)+b(\u(t),\u(t),\v)+a(\u(t),\v)\\
				&\qquad\qquad\qquad\quad-(\div\v,p(t))+\c(\llambda(t),\v(\X(t)))=0 
				&&\forall\v\in\Huo\\
				&(\div\u(t),q)=0  &&\forall q\in \Ldo\\
				& \label{eq:fict_solid} \dr \bigg(\frac{\partial^2\X}{\partial t^2},\Y \bigg)_\B+(\PP(\F(\s,t)),\grads\Y )_\B-\c(\llambda(t),\Y)=0&&\forall\Y\in \Hub \\
				& \c\bigg(\mmu,\u(\X(\cdot,t),t)-\frac{\partial\X}{\partial t}(t) \bigg)=0 &&\forall\mmu\in\LL \\
				&\u(\x,0)=\u_0(\x) &&\forall\x\in\Omega\\
				&\X(\s,0)=\X_0(\s) &&\forall\s\in\B.
			\end{align}
		\end{subequations}
	\end{pro}
	Here, we adopted the following notation: $\dr = \rho_s-\rho_f$ and
	\begin{equation*}
		a(\u,\v)=\int_\Omega \nu\Grads(\u):\Grads(\v)\,\dx,\qquad b(\u,\v,\w)=\frac{\rho_f}{2}((\u\cdot\nabla\v,\w)-(\u\cdot\nabla\w,\v)).
	\end{equation*}
	The extended viscosity $\nu$ is equal to $\nu_f$ in $\Oft$ and $\nu_s$ in $\Ost$. For simplicity, in the remainder of the paper, we neglect the non-linear convective term $b(\u,\v,\w)$.
	
	\FC
	\begin{rem} 
		By exploiting the incompressibility of both the fluid and solid materials, it is straightforward to impose the divergence free condition to the extended velocity. Notice that, in this case, the variable $p$ represents the Lagrange multiplier associated to such constraint, as usual for the Navier--Stokes equations. This might be considered as a limitation of our method. However, it is possible to extend our fictitious domain approach to the case of compressible solids. This is done for instance in~\cite{compressible}, where a fictitious pressure field without any physical meaning is introduced in the solid domain, and it is weakly imposed to be zero.
		Moreover, in our analysis we strongly rely on the assumption that the solid material has a viscous component. The method can be generalized to hyper-elastic structures as done in~\cite{heltai-costanzo}, where the authors take due account of the additional viscous term that is fictitiously added to the extended fluid equation.
	\end{rem}
	\CF
	
	\section{Time discretization and saddle point problem}\label{sec:time}
	
	In this section, we introduce the time semi-discretization of Problem~\ref{pro:problem_fictitious}. We consider a positive integer $N$ and we subdivide the time domain $[0,T]$ into $N$ equal sub-intervals of size $\dt$. For $n=0,\dots,N$, we set $t_n=n\dt$. We consider a backward approximation of time derivatives, i.e. given the generic function $f$
	\begin{equation}
		\derivative{f}{t}(t_{n+1})\approx\frac{f^{n+1}-f^{n}}{\dt},\qquad
		\frac{\partial^2f}{\partial t^2}(t_{n+1})\approx\frac{f^{n+1}-2f^{n}+f^{n-1}}{\dt^2},
	\end{equation}
	where $f^n=f(t_n)$.
	
	The time semi-discretization of Problem~\ref{pro:problem_fictitious} is the following.
	
	\begin{pro}
		\label{pro:problem_time_dis}
		Given $\u_0\in \Huo$ and $\X_0\in\Winftyd$, for $n=1,\dots,N$ find $(\u^n,p^n)\in\Huo\times \Ldo$, $\X^n\in \Hub$, and $\llambda^n\in\LL$, such that
		\begin{subequations}
			\begin{align}
				&\notag\rho_f\bigg(\frac{\u^{n+1}-\u^{n}}{\dt},\v\bigg)+a(\u^{n+1},\v)\\
				&\hspace*{2.7cm}-(\div\v,p^{n+1})+\c(\llambda^{n+1},\v(\X^{n}))=0&&\forall\v\in\Huo\\
				&(\div\u^{n+1},q)=0  &&\forall q\in \Ldo\\
				&\notag \dr \bigg(\frac{\X^{n+1}-2\X^n+\X^{n-1}}{\dt^2},\Y \bigg)_\B+(\PP(\F^{n+1}),\grads\Y )_\B\\
				&\hspace*{5.8cm}-\c(\llambda^{n+1},\Y)=0 &&\forall\Y\in \Hub\\
				& \label{eq:initiTime}\c\bigg(\mmu,\u^{n+1}(\X^{n})-\frac{\X^{n+1}-\X^{n}}{\dt} \bigg)=0  &&\forall\mmu\in\LL
			\end{align}
		\end{subequations}
	\end{pro}
	
	The coupling terms are treated in a semi-implicit way by taking into account the position of the solid body at the previous time instant as in $\c(\llambda^{n+1},\v(\X^{n}))$ and $\c(\mmu,\u^{n+1}(\X^{n}))$. Moreover, the term $\X^{-1}$ can be computed by solving the following equation
	\begin{equation*}
		\frac{\X^0-\X^{-1}}{\Delta t} = \u_0^s \quad \text{ in }\B.
	\end{equation*}
	
	We recall that the method is unconditionally stable, without any restriction on the choice of time step $\dt$, see~\cite[Prop. 3]{2015}.
	
	The semi-discrete Problem~\ref{pro:problem_time_dis} can be interpreted as a sequence of stationary saddle point problems, see~\cite{stat}. Before presenting such formulation, we assume that the solid material is described by a linear constitutive law, hence we set $\PP(\F)=\kappa\F$. At a fixed time instant, unknowns and physical constants can be rearranged as
	\begin{gather*}
		\Xbar = \X^{n},\quad\u = \u^{n+1},\quad p = p^{n+1},\quad\X = \frac{\X^{n+1}}{\dt},\quad\llambda = \llambda^{n+1},\\
		\f = \frac{\rho_f}{\dt}\u^n,\quad\g = \frac{\dr}{\Delta t^2}(2\X^n-\X^{n-1}),\quad\d = \frac{1}{\Delta t}\X^n,\quad
		\alpha = \frac{\rho_f}{\Delta t},\quad\beta = \frac{\dr}{\Delta t},\quad\gamma = \kappa\Delta t,
	\end{gather*}
	so that we obtain the following saddle point problem.
	\begin{pro}
		\label{pro:stationary_general}
		Let $\Xbar\in\Winftyd$ be invertible with Lipschitz inverse. Given
		$\f\in\LdOd$, $\g\in \LdBd$, and $\d\in\Hub$, find $(\u,p)\in\Huo\times\Ldo$,
		$\X\in\Hub$, and $\llambda\in\LL$ such that
		\begin{subequations}
			\begin{align}
				&\a_f(\u,\v)-(\div\v,p)_\Omega+\c(\llambda,\v(\Xbar))=(\f,\v)_\Omega&&
				\forall\v\in\Huo\\
				&(\div\u,q)_\Omega=0&&\forall q\in \Ldo\\
				&\a_s(\X,\Y)-\c(\llambda,\Y)=(\g,\Y)_\B&&\forall\Y\in\Hub\\
				&\c(\mmu,\X-\u(\Xbar))=\c(\mmu,\d)&&\forall\mmu\in\LL
			\end{align}
		\end{subequations}
		where
		\begin{equation}\label{eq:forme}
			\begin{aligned}
				&\a_f(\u,\v)=\alpha(\u,\v)_\Omega+(\nu\Grads(\u),\Grads(\v))_\Omega&& \forall\u,\v\in\Huo\\
				&\a_s(\X,\Y) = \beta(\X,\Y)_\B+\gamma (\grads\X,\grads\Y)_\B && \forall\X,\Y\in\Hub.
			\end{aligned}
		\end{equation}
	\end{pro}
	
	\subsection{Known theoretical results on continuous and discrete problems}\label{sec:analysis_c}
	
	In this section, we recall the theoretical results regarding Problem~\ref{pro:stationary_general} and its finite element discretization. We also introduce the notation used in the rest of the paper. We first define the product space $\VV = \Huo\times\Hub\times\LL\times\Ldo$ and the norm 
	\begin{equation}
		\|\V\|_\VV^2 = \norm{\v}^2_{1,\Omega} + \norm{\Y}^2_{1,\B} + \norm{\mmu}^2_{\LL}+\|q\|_{0,\Omega}^2.
	\end{equation}
	The tuple $\V=(\v,\Y,\mmu,q)$ is a generic element of $\VV$. For a fixed $\Xbar$, we introduce the bilinear form $\Bil:\VV\times\VV\rightarrow\R$ defined as
	\begin{equation}\label{eq:def_Bil}
		\begin{aligned}
			\Bil(\U,\V;\Xbar) = & \,\a_f(\u,\v) + \a_s(\X,\Y)\\
			&+ \c(\mmu,\X-\u(\Xbar)) - \c(\llambda,\Y-\v(\Xbar))\\
			&-(\div\v,p)_\Omega + (\div\u,q)_\Omega,
		\end{aligned}
	\end{equation}
	so that we can rewrite Problem~\ref{pro:stationary_general} as follows.
	
	\begin{pro}\label{pro:stat_gen_unified}
		Let $\Xbar\in\Winftyd$ be invertible with Lipschitz inverse, given $\f\in \LdOd$, $\g\in \LdBd$ and $\d\in\Hub$, find $\U\in\VV$ such that
		\begin{equation}
			\Bil(\U,\V;\Xbar) = (\f,\v)_\Omega + (\g,\Y)_\B + \c(\mmu,\d)\qquad\forall\V\in\VV.
		\end{equation}
	\end{pro}
	
	Combining, in a classical way, the well-posedness theory presented in~\cite{stat} with the results in~\cite{xu2003some}, $\Bil$~satisfies the following inf-sup condition.
	
	\begin{prop}
		There exists a positive constant $\eta$ such that
		\begin{equation*}
			\inf_{\U\in\VV}\sup_{\V\in\VV} \frac{\Bil(\U,\V;\Xbar)}{\|\U\|_\VV\|\V\|_{\VV}} \ge \eta.
		\end{equation*}
	\end{prop}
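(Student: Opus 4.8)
The plan is to read~\eqref{eq:def_Bil} as a classical Brezzi saddle point form and then to convert the two Brezzi conditions into the single Babu\v{s}ka inf-sup of the statement by means of the abstract equivalence of~\cite{xu2003some}. To this end I would regard $(\u,\X)\in\Huo\times\Hub$ as the primal variable and the pair of multipliers $(\llambda,p)\in\LL\times\Ldo$ as the dual variable, and introduce the elliptic block $A((\u,\X),(\v,\Y))=\a_f(\u,\v)+\a_s(\X,\Y)$ together with the constraint block $B((\u,\X),(\mmu,q))=\c(\mmu,\X-\u(\Xbar))+(\div\u,q)_\Omega$. With these notations~\eqref{eq:def_Bil} becomes $\Bil(\U,\V;\Xbar)=A((\u,\X),(\v,\Y))+B((\u,\X),(\mmu,q))-B((\v,\Y),(\llambda,p))$, which is exactly the skew-symmetric saddle point structure covered by that theory; it then remains to check ellipticity of $A$ and the inf-sup of $B$.

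Ellipticity of $A$ holds on the \emph{whole} space, so no reduction to a kernel is needed. For $\a_f$ this is the usual consequence of Korn's and Poincar\'e's inequalities on $\Huo$ together with $\alpha,\nu>0$; for $\a_s$ it follows from $\beta,\gamma>0$, since in~\eqref{eq:forme} the zero-order term $\beta(\X,\X)_\B$ is what upgrades the seminorm $\gamma(\grads\X,\grads\X)_\B$ to control of the full $\Hub$ norm (recall that $\X$ carries no boundary condition). Hence $A((\u,\X),(\u,\X))\ge\alpha_0(\norm{\u}_{1,\Omega}^2+\norm{\X}_{1,\B}^2)$.

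The heart of the matter is the inf-sup of $B$: given $(\mmu,q)$ I must construct $(\u,\X)$ realizing $B((\u,\X),(\mmu,q))\gtrsim\norm{\mmu}_\LL^2+\norm{q}_{0,\Omega}^2$ with $\norm{(\u,\X)}$ controlled by $\norm{\mmu}_\LL+\norm{q}_{0,\Omega}$. The idea is to recover the two multipliers through distinct primal components. To capture $\norm{\mmu}_\LL$ I would test with the solid displacement only: the defining property of $\c$ gives $\sup_\Y\c(\mmu,\Y)/\norm{\Y}_{1,\B}=\norm{\mmu}_\LL$ for both $(\LL_0,\c_0)$ and $(\LL_1,\c_1)$ (by the Riesz representative for $\c_1$ and by the very definition of the dual norm for $\c_0$), so there is $\X_\mmu$ with $\c(\mmu,\X_\mmu)=\norm{\mmu}_\LL^2$ and $\norm{\X_\mmu}_{1,\B}=\norm{\mmu}_\LL$; this choice does not touch the divergence term. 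To capture $\norm{q}_{0,\Omega}$ I would invoke the classical Stokes inf-sup on $\Huo\times\Ldo$ to pick $\u_q$ with $(\div\u_q,q)_\Omega\ge\beta_{\mathrm{St}}\norm{q}_{0,\Omega}^2$ and $\norm{\u_q}_{1,\Omega}\lesssim\norm{q}_{0,\Omega}$, and then set $(\u,\X)=(\delta\u_q,\X_\mmu)$ for a small parameter $\delta>0$.

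With this test pair, $B((\delta\u_q,\X_\mmu),(\mmu,q))$ equals the two desired positive contributions $\norm{\mmu}_\LL^2$ and $\delta(\div\u_q,q)_\Omega$ plus the single cross term $-\delta\,\c(\mmu,\u_q(\Xbar))$. Absorbing this cross term is the main obstacle, since it couples the two multipliers through the composition with $\Xbar$. I would handle it by Young's inequality after proving the continuity estimate $\norm{\v(\Xbar)}_{1,\B}\le C_\Xbar\norm{\v}_{1,\Omega}$, which is exactly where the hypothesis of Problem~\ref{pro:stat_gen_unified} --- that $\Xbar\in\Winftyd$ be invertible with Lipschitz inverse --- is used: the change of variables $\s\mapsto\Xbar(\s)$ then preserves $H^1$ regularity with a constant depending only on the Lipschitz norms of $\Xbar$ and $\Xbar^{-1}$. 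Choosing $\delta$ small relative to $\beta_{\mathrm{St}}$, $C_\Xbar$ and the continuity constant of $\c$ makes the cross term absorbable, which yields the inf-sup of $B$ with a constant $\beta_0$ independent of the interface position. Feeding $\alpha_0$, $\beta_0$ and the (obvious) continuity bounds for $A$ and $B$ into the estimate of~\cite{xu2003some} finally produces the global constant $\eta>0$, completing the proof.
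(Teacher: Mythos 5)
Your route --- reading $\Bil$ as a Brezzi saddle point form with primal variable $(\u,\X)$ and multiplier pair $(\llambda,p)$, checking the two Brezzi conditions, and converting them into the single Babu\v{s}ka inf-sup via the equivalence of~\cite{xu2003some} --- is essentially the route the paper takes, with the Brezzi analysis outsourced to~\cite{stat}. Your treatment of the crucial condition, the inf-sup for $B((\u,\X),(\mmu,q))=\c(\mmu,\X-\u(\Xbar))+(\div\u,q)_\Omega$, is sound: the Riesz/dual-norm representative for $\mmu$, the Stokes inf-sup for $q$, and the $\delta$-scaling that absorbs the cross term through the change-of-variables bound $\norm{\v(\Xbar)}_{1,\B}\le C_\Xbar\norm{\v}_{1,\Omega}$ (which is indeed where the hypothesis that $\Xbar$ is bi-Lipschitz enters) all hold up.

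The gap is in the ellipticity step. You assert that $A$ is elliptic on the whole space, ``so no reduction to a kernel is needed'', resting this on $\beta>0$. But $\beta=\dr/\Delta t$ with $\dr=\rho_s-\rho_f$, and the equal-density case $\beta=0$ lies squarely inside the scope of the statement: the paper's own experiments in Section~\ref{sec:tests} take $\alpha=\beta=0$ and apply the conditioning theory that is built on precisely this proposition. When $\beta=0$, $\a_s(\X,\X)=\gamma\,|\X|_{1,\B}^2$ is only a seminorm on $\Hub$ --- constant displacements are invisible to $A$, exactly because, as you yourself note, $\X$ carries no boundary condition --- so whole-space ellipticity is false and your argument stops there. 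The standard repair is the kernel reduction you explicitly set aside: on $\ker B=\{(\u,\X):\ \X=\u(\Xbar),\ \div\u=0\}$ (this characterization uses the nondegeneracy property of $\c$ and the zero-mean constraint on the pressure space), one has $\norm{\X}_{1,\B}=\norm{\u(\Xbar)}_{1,\B}\le C_\Xbar\norm{\u}_{1,\Omega}$, so coercivity of $\a_f$ alone --- Korn and Poincar\'e on $\Huo$, valid even with $\alpha=0$ --- already controls $\norm{\u}_{1,\Omega}^2+\norm{\X}_{1,\B}^2$. Ellipticity on $\ker B$ is all that the Brezzi/Xu--Zikatanov equivalence you invoke requires, so with this one change your proof covers every $\beta\ge 0$, which is the generality the paper needs.
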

	
	In order to introduce the finite element discretization of
	Problem~\ref{pro:stat_gen_unified}, we consider two meshes $\T_h^\Omega$
	and $\T_h^\B$ decomposing $\Omega$ and $\B$, respectively, into triangles.
	We denote by $h_\Omega$ and $h_\B$
	the mesh sizes of $\T_h^\Omega$ and $\T_h^\B$, respectively.
	For fluid velocity and pressure, we choose a pair of discrete spaces
	$(\Vline_h,Q_h)\subset\Huo\times\Ldo$ satisfying the
	discrete inf-sup condition for the Stokes problem. For the solid unknowns,
	we consider $\Sline_h\subset\Hub$ and $\LL_h\subset\LL$. For simplicity, we
	assume that $\Sline_h=\LL_h$ even if more general cases have been studied
	(see~\cite{najwa}). From now on, we set
	\begin{equation}\label{eq:element}
		\begin{aligned}
			\Vline_h &= \{ \v\in\HuOm: \v_{\mid
				\tri}\in[\Pcal_1(\tri)]^2 \quad \forall \tri\in\T_{h/2}^\Omega \} \\
			Q_h &= \{q\in \Ldo\cap H^1(\Omega): q_{\mid \tri}\in \Pcal_1(\tri)\quad\forall \tri\in\T_{h}^\Omega\}\\
			\S_h &= \{\w\in\Hub:\w_{\mid \tri}\in [\Pcal_1(\tri)]^2\quad\forall \tri\in\mathcal{T}_{h}^\B\}\\
			\LL_h&=\{\mmu\in\Hub:\mmu_{\mid \tri}\in [\Pcal_1(\tri)]^2\quad\forall \tri\in\mathcal{T}_{h}^\B\}.
		\end{aligned}
	\end{equation}
	The pair of Stokes spaces $(\Vline_h,Q_h)$ corresponds to the Bercovier--Pironneau element introduced in~\cite{bercovier}, also known as $\Pcal_1$-iso-$\Pcal_2/\Pcal_1$. The velocity mesh $\T_{h/2}^\Omega$ is a refinement of $\T_h^\Omega$ obtained by partitioning each $\tri\in\T_{h}^\Omega$ into four sub triangles by connecting the three mid points of the edges of $\tri$.
	
	We make the following assumption on the considered meshes.
	\begin{assump}\label{assump:mesh}
		We assume that $\T_h^\Omega$ and $\T_h^\B$ are quasi-uniform meshes with size $h_\Omega,h_\B<1$.
	\end{assump}
	
	Notice that, when $\c=\c_0$ and if $\mmu\in\mathbf{L}^1_{loc}(\B)$, the duality pairing in $\Hub$ can be identified with the scalar product in $\LdBd$, so that we set
	\begin{equation}\label{eq:c_l2_inner}
		\c_0(\mmu_h,\Y_h) = (\mmu_h,\Y_h)_\B \quad \forall\mmu_h\in\LL_h,\forall\Y_h\in\Sline_h.
	\end{equation}
	Similarly to the continuous case, we now introduce the discrete product space $\VV_h=\Vline_h\times\Sline_h\times\LL_h\times Q_h$, so that the discrete counterpart of Problem~\ref{pro:stat_gen_unified} reads as follows.
	
	\begin{pro}\label{pro:dis_stat_gen_unified}
		Let $\Xbar\in\Winftyd$ be invertible with Lipschitz inverse, given $\f\in \LdOd$, $\g\in \LdBd$ and $\d\in\Hub$, find $\U_h\in\VV_h$ such that
		\begin{equation}
			\Bil(\U_h,\V_h;\Xbar) = (\f,\v_h)_\Omega + (\g,\Y_h)_\B - \c(\mmu_h,\d)\qquad\forall\V_h\in\VV_h.
		\end{equation}
	\end{pro}
	
	This discrete problem is well-posed thanks to the theory discussed
in~\cite{stat} combined with~\cite{xu2003some}. The following proposition
holds \fc true independently of how the fluid and the mapped solid meshes
intersect each other.\cf
	
	\begin{prop}\label{prop:infsup_discrete}
		There exists a positive constant $\tilde\eta$, independent of $h$, such that
		\begin{equation*}
			\inf_{\U_h\in\VV_h}\sup_{\V_h\in\VV_h} \frac{\Bil(\U_h,\V_h;\Xbar)}{\|\U_h\|_\VV\|\V_h\|_{\VV}} \ge \tilde\eta.
		\end{equation*}
	\end{prop}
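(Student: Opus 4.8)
The plan is to prove the discrete inf-sup condition constructively, by exhibiting for every $\U_h=(\u_h,\X_h,\llambda_h,p_h)\in\VV_h$ an explicit test tuple $\V_h\in\VV_h$ satisfying $\Bil(\U_h,\V_h;\Xbar)\gtrsim\norm{\U_h}_\VV^2$ together with $\norm{\V_h}_\VV\lesssim\norm{\U_h}_\VV$, with hidden constants independent of $h$ and, crucially, independent of how the mapped solid mesh cuts $\T_h^\Omega$. I would assemble $\V_h$ as a weighted sum of three elementary pieces, each controlling one block of $\U_h$, and then balance the weights. This is essentially the explicit realization of the combination of the theory in \cite{stat} with \cite{xu2003some} announced before the statement; carrying it out by hand makes the robustness with respect to small cut cells transparent.

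The first piece is the diagonal choice $\V_h^{(1)}=(\u_h,\X_h,\llambda_h,p_h)$. Inserting it into \eqref{eq:def_Bil}, the two coupling contributions $\c(\llambda_h,\X_h-\u_h(\Xbar))$ cancel and the two divergence contributions $(\div\u_h,p_h)_\Omega$ cancel as well, leaving exactly $\Bil(\U_h,\V_h^{(1)};\Xbar)=\a_f(\u_h,\u_h)+\a_s(\X_h,\X_h)$. Coercivity of $\a_f$ on $\Huo$ (from $\alpha>0$ and Korn's inequality) and of $\a_s$ on $\Hub$ (from $\beta,\gamma>0$) then controls $\norm{\u_h}_{1,\Omega}^2+\norm{\X_h}_{1,\B}^2$. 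The second piece handles the pressure: using the discrete Stokes inf-sup of the Bercovier--Pironneau pair $(\Vline_h,Q_h)$ I pick $\v_h^p\in\Vline_h$ with $-(\div\v_h^p,p_h)_\Omega\ge\beta_0\norm{p_h}_{0,\Omega}^2$ and $\norm{\v_h^p}_{1,\Omega}\lesssim\norm{p_h}_{0,\Omega}$, and set $\V_h^{(2)}=(\v_h^p,0,0,0)$, producing $\beta_0\norm{p_h}_{0,\Omega}^2$ plus the cross terms $\a_f(\u_h,\v_h^p)$ and $\c(\llambda_h,\v_h^p(\Xbar))$.

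The third piece controls the multiplier through the \emph{solid} side of the coupling, not the fluid side: with $\V_h^{(3)}=(0,\Y_h,0,0)$ one has $\Bil(\U_h,\V_h^{(3)};\Xbar)=\a_s(\X_h,\Y_h)-\c(\llambda_h,\Y_h)$, where no term $\v(\Xbar)$ appears. For $\c=\c_1$ I would simply take $\Y_h=-\llambda_h$, admissible since $\Sline_h=\LL_h$, which yields $\c_1(\llambda_h,\llambda_h)=\norm{\llambda_h}_{\LL_1}^2$ directly. For $\c=\c_0$, whose norm $\norm{\cdot}_{\LL_0}$ is the dual norm of $\Hub$ while the discrete pairing is the $L^2$ product \eqref{eq:c_l2_inner}, I would instead invoke a uniform discrete inf-sup $\sup_{\Y_h\in\Sline_h}\c_0(\llambda_h,\Y_h)/\norm{\Y_h}_{1,\B}\ge c\,\norm{\llambda_h}_{\LL_0}$ and take the near-maximizer as $\Y_h$; in either variant the cross term $\a_s(\X_h,\Y_h)$ is dominated by the already controlled $\norm{\X_h}_{1,\B}$.

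Setting $\V_h=\V_h^{(1)}+\epsilon_1\V_h^{(2)}+\epsilon_2\V_h^{(3)}$ and fixing $\epsilon_1,\epsilon_2$ small, Young's inequality absorbs every cross term into the coercive and inf-sup contributions, giving the full lower bound and hence $\tilde\eta$. The point that makes $\tilde\eta$ independent of the mesh intersection is that the only surviving fluid--solid coupling terms are of the form $\c(\cdot,\v(\Xbar))$ and occur \emph{exclusively} as cross terms to be bounded from above, never as a lower bound; their continuity $\abs{\c(\mmu,\v(\Xbar))}\lesssim\norm{\mmu}_\LL\norm{\v}_{1,\Omega}$ follows from the change of variables $\s\mapsto\Xbar(\s)$ and the chain rule, and depends only on the global regularity of $\Xbar$ (invertible, $\Xbar\in\Winftyd$, Lipschitz inverse), not on the relative position of the meshes. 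The main obstacle is the third piece in the $\c_0$ case: proving the multiplier inf-sup uniformly in $h$ when $\llambda_h$ is measured in the dual norm, since the $L^2$ pairing realizes that norm only through the identity $\Sline_h=\LL_h$ and the abstract transfer argument of \cite{xu2003some}, whereas the $\c_1$ case is immediate.
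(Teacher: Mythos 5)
Your proposal is correct, and it is more explicit than what the paper actually provides: the paper gives no self-contained proof of Proposition~\ref{prop:infsup_discrete}, but simply asserts that it follows by combining the blockwise (Brezzi-type) well-posedness analysis of~\cite{stat} with the abstract Brezzi-to-Babu\v{s}ka conversion of~\cite{xu2003some}. Your composite test function $\V_h=\V_h^{(1)}+\epsilon_1\V_h^{(2)}+\epsilon_2\V_h^{(3)}$ is the concrete realization of that abstract argument: the diagonal piece exploits the skew-symmetric structure of $\Bil$ (the coupling and divergence contributions cancel exactly, leaving $\a_f(\u_h,\u_h)+\a_s(\X_h,\X_h)$), the Bercovier--Pironneau inf-sup recovers the pressure, the solid-side test function recovers the multiplier, and Young's inequality balances the cross terms; the verification of each cancellation is correct. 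Your structural observation that the composed terms $\c(\cdot,\v_h(\Xbar))$ appear only as upper-bounded cross terms, whose continuity $\abs{\c(\mmu_h,\v_h(\Xbar))}\le C\norm{\mmu_h}_\LL\norm{\v_h}_{1,\Omega}$ uses only $\Xbar\in\Winftyd$ with Lipschitz inverse, is precisely the reason $\tilde\eta$ is independent of how the meshes intersect --- the point the paper states but leaves implicit. What the citation-based route buys is brevity; what your route buys is an explicit constant and a transparent display of where each hypothesis enters.

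Two caveats. First, the lemma you invoke in the $\c_0$ case, namely the uniform bound $\sup_{\Y_h\in\Sline_h}(\llambda_h,\Y_h)_\B/\norm{\Y_h}_{1,\B}\ge c\norm{\llambda_h}_{\LL_0}$, is indeed the crux, and you leave it unproven; it does hold under Assumption~\ref{assump:mesh}, and the proof is short: since $\llambda_h\in\LL_h=\Sline_h$, one has $(\llambda_h,\Y)_\B=(\llambda_h,P_h\Y)_\B$ for the $L^2(\B)$-projection $P_h$ onto $\Sline_h$, and quasi-uniformity gives the $H^1$-stability $\norm{P_h\Y}_{1,\B}\le C\norm{\Y}_{1,\B}$, whence $\norm{\llambda_h}_{\LL_0}=\sup_{\Y}(\llambda_h,P_h\Y)_\B/\norm{\Y}_{1,\B}\le C\sup_{\Y_h}(\llambda_h,\Y_h)_\B/\norm{\Y_h}_{1,\B}$. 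You should state this rather than attribute it to~\cite{xu2003some}, whose content is the Brezzi-to-Babu\v{s}ka equivalence, not this discrete dual-norm equivalence. Second, a minor point: coercivity of $\a_f$ on $\Huo$ needs only Korn's first inequality (it holds even with $\alpha=0$), whereas coercivity of $\a_s$ on all of $\Hub$ genuinely requires $\beta>0$; your argument therefore operates in the time-discretization regime $\beta,\gamma>0$ of~\eqref{eq:forme}, which is the setting the proposition tacitly assumes.
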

	
	Therefore, the convergence theorem directly follows.
	
	\begin{teo}
		\label{theo:brezzi}
		Let $\Vline_h$ and $Q_h$ satisfy the usual compatibility
		conditions for the Stokes problem. If $\U=(\u,\X,\llambda,p)$ and
		$\U_h=(\u_h,\X_h,\llambda_h,p_h)$ denote the solution for the
		continuous and the discrete problem,  respectively, then the following error estimate holds true
		\begin{equation*}
			\begin{aligned}
				\norm{\u-\u_h}_{1,\Omega} &+ \norm{p-p_h }_{0,\Omega}+\norm{\X-\X_h }_{1,\B}+\norm{ \llambda-\llambda_h}_{\LL}\\
				&\hspace{-0.5cm}\leq C\big( \inf_{\v_h\in\Vline_h}\norm{\u-\v_h}_{1,\Omega} + \inf_{q_h\in Q_h}\norm{p-q_h}_{0,\Omega} + \inf_{\Y_h\in\Sline_h}\norm{\X-\Y_h}_{1,\B} + 
				\inf_{\mmu_h\in\LL_h}\norm{\llambda-\mmu_h}_{\LL} \big).
			\end{aligned}
		\end{equation*}
	\end{teo}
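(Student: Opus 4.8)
The plan is to apply the standard Babuška--Brezzi quasi-optimality argument: once an $h$-independent discrete inf-sup condition is available, a Céa-type estimate follows from the boundedness of the bilinear form together with Galerkin orthogonality. Three ingredients are therefore required. Galerkin orthogonality is immediate, since $\VV_h\subset\VV$ and the discrete equation in Problem~\ref{pro:dis_stat_gen_unified} is the restriction of the continuous equation in Problem~\ref{pro:stat_gen_unified} to test functions in $\VV_h$; subtracting the two gives
\[
\Bil(\U-\U_h,\V_h;\Xbar)=0\qquad\forall\,\V_h\in\VV_h.
\]
The discrete inf-sup with constant $\tilde\eta$ is exactly Proposition~\ref{prop:infsup_discrete}, so it only remains to record the continuity of $\Bil(\cdot,\cdot;\Xbar)$.

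First I would establish boundedness. The forms $\a_f$ and $\a_s$ are continuous on $\Huo\times\Huo$ and $\Hub\times\Hub$, respectively, and the two divergence terms are controlled by Cauchy--Schwarz. For the coupling term I would use the continuity of $\c$ together with the composition bound $\norm{\v(\Xbar)}_{1,\B}\le C(\Xbar)\norm{\v}_{1,\Omega}$, which is precisely where the hypothesis that $\Xbar$ is invertible with Lipschitz inverse enters, through a change of variables controlling the $\Hub$-norm of $\v\circ\Xbar$. Collecting these bounds yields a constant $M$ with $|\Bil(\U,\V;\Xbar)|\le M\norm{\U}_\VV\norm{\V}_\VV$; crucially, $M$ is finite and independent of $h$ because $\Xbar$ is a fixed map.

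Next, for an arbitrary $\V_h\in\VV_h$ I would apply the discrete inf-sup to $\U_h-\V_h\in\VV_h$, use Galerkin orthogonality to trade $\U_h$ for $\U$, and finally invoke continuity:
\[
\tilde\eta\,\norm{\U_h-\V_h}_\VV\le\sup_{\W_h\in\VV_h}\frac{\Bil(\U_h-\V_h,\W_h;\Xbar)}{\norm{\W_h}_\VV}=\sup_{\W_h\in\VV_h}\frac{\Bil(\U-\V_h,\W_h;\Xbar)}{\norm{\W_h}_\VV}\le M\norm{\U-\V_h}_\VV.
\]
A triangle inequality then gives $\norm{\U-\U_h}_\VV\le(1+M/\tilde\eta)\,\norm{\U-\V_h}_\VV$, and taking the infimum over $\V_h\in\VV_h$ produces quasi-optimality in the $\VV$-norm.

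It remains to unpack this bound into the stated form. Because $\VV_h=\Vline_h\times\Sline_h\times\LL_h\times Q_h$ is a product and $\norm{\cdot}_\VV^2$ is the sum of the four squared component norms, the best-approximation infimum factorizes over the components; passing from their Euclidean combination to their sum costs only the fixed factor $\sqrt{4}$, which is absorbed into $C$. This yields exactly the right-hand side of the theorem, with $C=C(M,\tilde\eta)$ independent of $h$ and of the interface position, since $\tilde\eta$ has this property by Proposition~\ref{prop:infsup_discrete} and $M$ has it because $\Xbar$ is fixed. I expect the only genuinely nontrivial step to be the uniform continuity estimate for the coupling term, i.e. bounding the composition operator $\v\mapsto\v\circ\Xbar$ in the $\Hub$-norm; the remainder is the textbook mixed-method argument.
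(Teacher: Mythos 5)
Your proof is correct and is essentially the paper's own argument: the paper states that the theorem ``directly follows'' from the discrete inf-sup condition of Proposition~\ref{prop:infsup_discrete} together with the continuity of $\Bil$ (the bound later recorded in~\eqref{eq:B_cont_cor}), i.e.\ exactly the standard conforming Galerkin quasi-optimality argument you spell out. The only point worth noting is that Galerkin orthogonality requires the continuous and discrete right-hand sides to coincide on $\VV_h$, which is the intended reading despite the sign discrepancy in the $\c(\mmu,\d)$ term between Problems~\ref{pro:stat_gen_unified} and~\ref{pro:dis_stat_gen_unified} (a typo in the paper).
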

		
	\section{Computational aspects regarding the coupling term}\label{sec:coupling_term}
	
	In the previous sections, we recalled the main results regarding stability and well-posedness of continuous and discrete problems. We now focus on the coupling term, which needs particular care during the assembly phase. To begin with, we rewrite Problem~\ref{pro:dis_stat_gen_unified} in algebraic form as
	\begin{equation}\label{eq:lin_sys}
		\Sys\,\U_h = \mathcal{F}
	\end{equation}
	where	
	\begin{equation}\renewcommand{\arraystretch}{1.15}
		\label{eq:matrix}
		\Sys=\left[\begin{array}{@{}cccc@{}}
			\Amatr_f & 0 & \Cmatr_f^\top & -\Bmatr_f^\top\\ 
			0  & \Amatr_s & -\Cmatr_s^\top &0 \\
			-\Cmatr_f &  \Cmatr_s & 0 & 0\\
			\Bmatr_f & 0 & 0 & 0 \\
		\end{array}\right],\qquad
		\mathcal{F}=
		\left[ \begin{array}{c}
			\f\\
			\g\\
			\d\\
			\mathbf{0}
		\end{array}\right],
	\end{equation}
	and by abuse of notation $\U_h$ denotes the vector associated to the
	unknowns.
	Except for $\Cmatr_f$, each sub-matrix of $\Sys$ can be assembled exactly provided that a sufficiently precise quadrature rule is employed. Indeed, while $\Amatr_f$ and $\Bmatr_f$ are constructed on the fluid mesh $\T_h^\Omega$ and $\Amatr_s$,  $\Cmatr_s$ are defined on the solid mesh $\T_h^\B$, the interface matrix $\Cmatr_f$ combines the behavior of fluid and solid in the fictitious region of the fluid domain.
	
	$\Cmatr_f$ is the discrete counterpart of $\c(\mmu_h,\v_h(\Xbar))$, which involves the computation of integrals on $\T_h^\B$ of $\mmu_h\in\LL_h$ and $\v_h\in\Vline_h$, i.e. of functions defined on two independent meshes. More precisely,
	\begin{subequations}\label{eq:ch}
		\begin{align}
			&\label{eq:ch_l2}\c_0(\mmu_h,\v_h(\Xbar)) = \sum_{\tris\in\T^\B_h}\int_{\tris} \mmu_h\cdot \v_h(\Xbar)\,\ds,\\
			&\label{eq:ch_h1} \c_1(\mmu_h,\v_h(\Xbar)) 
			= \sum_{\tris\in\T^\B_h}\int_{\tris} \left(\mmu_h\cdot \v_h(\Xbar) + \grads\mmu_h:\grads\v_h(\Xbar)\right)\,\ds.
		\end{align}
	\end{subequations}	
	
	\begin{figure}
		\centering
		
		\vspace{3.5mm}
		
		\begin{overpic}[width=0.59\textwidth]{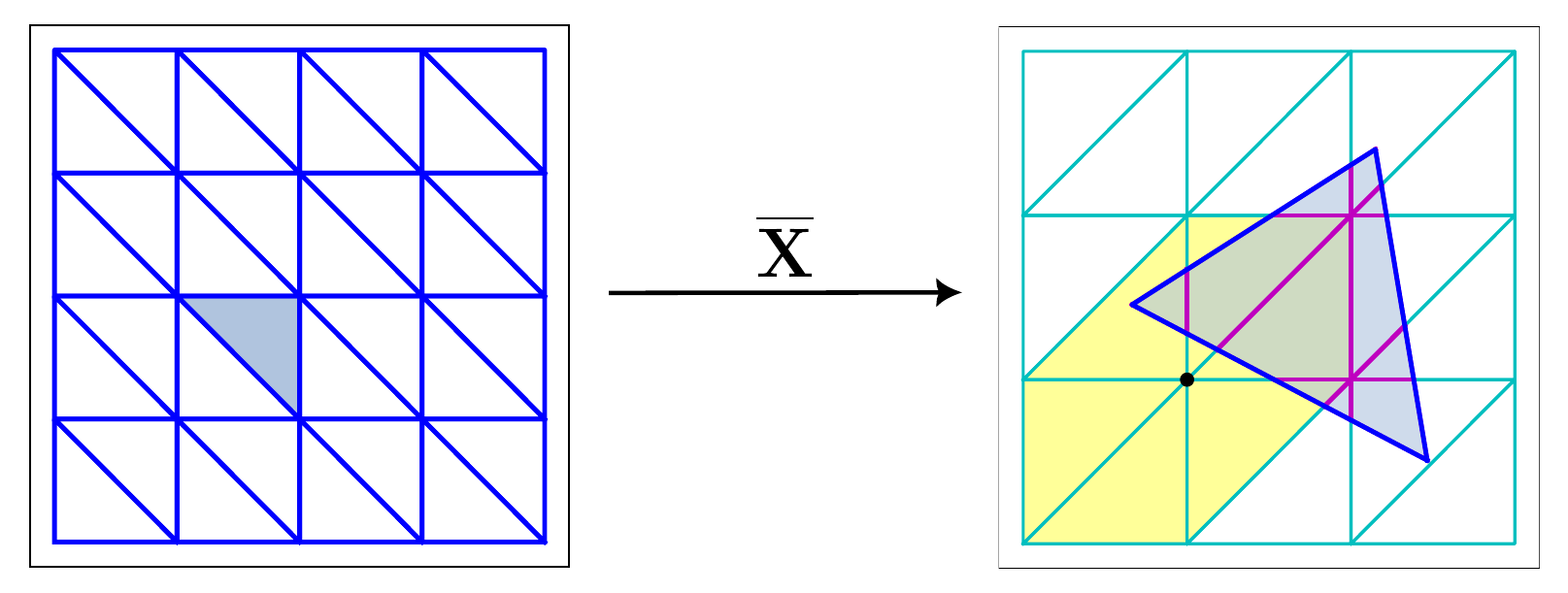}
			\put (-1.5,35) {$\B$}
		\end{overpic}
		\begin{overpic}[width=0.22\textwidth]{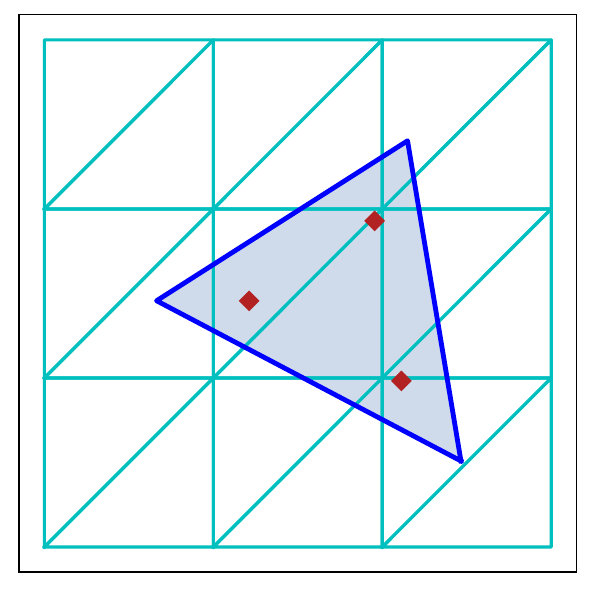}
			\put (-95,102) {Exact integration}
			\put (7,102) {Inexact integration}
		\end{overpic}
		\caption{ Mapping of a solid element $\tri_s\in\T_h^\B$ into the fluid mesh $T_{h/2}^\Omega$ showing the support mismatch of fluid basis function (yellow) with the immersed solid element and quadrature nodes for inexact integration.}
		\label{fig:coupling_term}
	\end{figure}	
	
	The composition of $\v_h$ with the map $\Xbar$ takes care of the
	actual configuration of the  solid. This fact is depicted in
	Figure~\ref{fig:coupling_term}, where a triangle $\tris\in\T_h^\B$ is
	immersed into the fluid mesh~$\T_{h/2}^\Omega$. Such operation introduces a mismatch
	between the mapped solid element and the support of fluid basis functions.
	An exhaustive discussion regarding the possible techniques for computing
	the integrals in~\eqref{eq:ch} can be found in~\cite{boffi2022interface}.
	The exact computation of the integrals can be done with the help of a composite quadrature
	rule defined on the intersection between the fluid and the mapped solid
	mesh (see purple lines in Figure~\ref{fig:coupling_term}) 
	which takes into consideration that $\v_h(\Xbar)$ is a piecewise
	polynomial in each element $\tris\in\T_h^\B$. 
	
	To this aim, we introduce a triangulation of the mapped elements by subdividing each intersection polygon into triangles, then we apply a suitable quadrature rule exact for the degree of the involved polynomials.
	This ``new mesh'' is thus an implementation tool and it does not appear in the formulation of the problem. We emphasize that the well-posedness of Problem~\ref{pro:dis_stat_gen_unified} is completely independent of $h$ (see Proposition~\ref{prop:infsup_discrete}). Therefore, the problem is naturally stable without the need of any artificial penalization term dealing with small cut cells.
	
	An alternative procedure consists in approximating the above integrals
	without taking into account that $\v_h(\Xbar)$ is piecewise polynomial in
	$\tri_s\in\T_h^\B$. In the rightmost picture of Figure~\ref{fig:coupling_term}, we show quadrature nodes of an inexact quadrature rule of order two. Given a generic
	triangle, we denote by $\{(\quadnode_k^0,\quadweigth_k^0)\}_{k=1}^{K_0}$
	and $\{(\quadnode_k^1,\quadweigth_k^1)\}_{k=1}^{K_1}$ quadrature rules for
	the $\LdBd$ scalar product and the $\LdBd$ scalar product of gradients,
	respectively. In this case, the expressions in~\eqref{eq:ch} become
	\begin{subequations}\label{eq:noint}
		\begin{equation}\label{eq:noint_l2}
			\c_{0,h}(\mmu_h,\v_h(\Xbar)) = \sum_{\tris\in\T^\B_h}\abs{\tris} \sum_{k=1}^{K_0} \quadweigth_k^0\,\mmu_h(\quadnode_k^0)\cdot \v_h(\Xbar(\quadnode_k^0)),
		\end{equation}
		{and}
		\begin{equation}\label{eq:noint_h1}
			\begin{aligned}
				&\c_{1,h}(\mmu_h,\v_h(\Xbar)) \\
				&\quad= \sum_{\tris\in\T^\B_h}\abs{\tris}\left(  \sum_{k=1}^{K_0} \quadweigth_k^0\,\mmu_h(\quadnode_k^0)\cdot \v_h(\Xbar(\quadnode_k^0)) + \sum_{k=1}^{K_1} \quadweigth_k^1\,\grads\mmu_h(\quadnode_k^1):\grads\v_h(\Xbar(\quadnode_k^1))\right).
			\end{aligned}
		\end{equation}
	\end{subequations}
	
	It is clear that, if the coupling term is assembled with the formulas given
	in~\eqref{eq:noint}, then a quadrature error is produced. This topic has
	been addressed in~\cite[Sec. 7]{BCG24}. We report here the associated
	quadrature error estimates.
	
	Taking into account the finite element spaces we are using, we make the
	following assumption.
	\begin{assump}\label{assump:quadrature}
		We assume that the quadrature rule
		$\{(\quadnode_k^0,\quadweigth_k^0)\}_{k=1}^{K_0}$ is exact for quadratic
		polynomials, whereas $\{(\quadnode_k^1,\quadweigth_k^1)\}_{k=1}^{K_1}$ is exact
		for constants.
	\end{assump}
	The quadrature errors can be bounded in terms of the mesh sizes as
	follows:
	\begin{equation}\label{eq:quaderr_l2}
		\begin{aligned}
			\abs{\c_0(\mmu_h,\v_h(\Xbar))-\c_{0,h}(\mmu_h,\v_h(\Xbar))}
			&\le C h_\B^{1/2}|\log h_\B| \norm{\mmu_h}_{\LL}\norm{\v_h}_{1,\Omega}
		\end{aligned}
	\end{equation}
	and
	\begin{equation}\label{eq:h1_est_fin}
		\abs{\c_1(\mmu_h,\v_h(\Xbar))-\c_{1,h}(\mmu_h,\v_h(\Xbar))}\le
		C \bigg( h_\B^{1/2}|\log h_\B| +
		\frac{h_\B}{h_\Omega} \bigg)
		\norm{\mmu_h}_{1,\B}\norm{\v_h}_{1,\Omega}.
	\end{equation}
	
	As we did in the continuous setting, we adopt the generic notation $\c_h$ for
	the inexact construction of both choices of coupling term.
	
	In the next section, we recall well-posedness results and convergence estimates for the problem with inexact coupling term.
	
	\subsection{Results on inexact coupling}\label{sec:inexact_analysis}
	
	Before rewriting Problem~\ref{pro:dis_stat_gen_unified} to take into account the inexact assembly of the coupling term, we define the inexact version of the bilinear form $\Bil$ and we study its properties. Given $\Xbar$, we introduce the new bilinear form $\Bil_h:\VV_h\times\VV_h\rightarrow\R$ defined as
	\begin{equation}\label{eq:def_Bil_h}
		\begin{aligned}
			\Bil_h(\U_h,\V_h;\Xbar) = & \,\a_f(\u_h,\v_h) + \a_s(\X_h,\Y_h)\\
			&+ \c(\mmu_h,\X_h)-\c_h(\mmu_h,\u_h(\Xbar)) - \c(\llambda_h,\Y_h)+\c_h(\llambda_h,\v_h(\Xbar))\\
			&-(\div\v_h,p_h)_\Omega + (\div\u_h,q_h)_\Omega.
		\end{aligned}
	\end{equation}
	
	The counterpart of Problem~\ref{pro:dis_stat_gen_unified} with inexact assembly of the coupling term reads as follows.	
	\begin{pro}\label{pro:ch_stat_gen_unified}
		Let $\Xbar\in\Winftyd$ be invertible with Lipschitz inverse, given $\f\in \LdOd$, $\g\in \LdBd$ and $\d\in\Hub$, find $\U_h^\star\in\VV_h$ such that
		\begin{equation}
			\Bil_h(\U_h^\star,\V_h;\Xbar) = (\f,\v_h)_\Omega + (\g,\Y_h)_\B - \c(\mmu_h,\d)\qquad\forall\V_h\in\VV_h.
		\end{equation}
	\end{pro}
	
	By replacing $\Bil$ with $\Bil_h$, we can rewrite the linear system~\eqref{eq:lin_sys} as
	\begin{equation}\label{eq:lin_sys_h}
		\Sys_h\,\U_h^\star = \mathcal{F}
	\end{equation}
	where
	\begin{equation}\renewcommand{\arraystretch}{1.5}
		\label{eq:matrix_h}
		\Sys_h=\left[\begin{array}{@{}cccc@{}}
			\Amatr_f & 0 & \Cmatr_{f,h}^\top & -\Bmatr_f^\top\\ 
			0  & \Amatr_s & -\Cmatr_s^\top &0 \\
			-\Cmatr_{f,h} &  \Cmatr_s & 0 & 0\\
			\Bmatr_f & 0 & 0 & 0 \\
		\end{array}\right].
	\end{equation}
	
	We recall the main results from~\cite{BCG24} about estimates for the
quadrature error as well as the analysis of
Problem~\ref{pro:ch_stat_gen_unified}. The well-posedness of
Problem~\ref{pro:ch_stat_gen_unified} is a consequence of the results
in~\cite{BCG24} combined with the theory in~\cite{xu2003some}. \fc Also in
this case, the following proposition states the stability of the problem
without any assumption on the intersection between the fluid and the mapped
solid meshes.\cf
	
	\begin{prop}\label{prop:infsup_discrete_ch}
		Under Assumptions~\ref{assump:mesh} and~\ref{assump:quadrature}, there exists a positive constant $\hat\eta$, independent of $h$, such that
		\begin{equation*}
			\inf_{\U_h\in\VV_h}\sup_{\V_h\in\VV_h} \frac{\Bil_h(\U_h,\V_h;\Xbar)}{\|\U_h\|_\VV\|\V_h\|_{\VV}} \ge \hat\eta.
		\end{equation*}
	\end{prop}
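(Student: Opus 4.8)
The strategy is to regard $\Bil_h$ as a perturbation of the exact form $\Bil$ and to transfer the stability of Proposition~\ref{prop:infsup_discrete} by a Strang-type argument. First I would isolate the difference of the two forms. Comparing~\eqref{eq:def_Bil} with~\eqref{eq:def_Bil_h}, the volume contributions $\a_f(\u_h,\v_h)$, $\a_s(\X_h,\Y_h)$, $(\div\v_h,p_h)_\Omega$, $(\div\u_h,q_h)_\Omega$ coincide, and so do the solid-only couplings $\c(\mmu_h,\X_h)$ and $\c(\llambda_h,\Y_h)$; only the two terms carrying the composition with $\Xbar$ are assembled inexactly. Hence
\[
\Bil(\U_h,\V_h;\Xbar)-\Bil_h(\U_h,\V_h;\Xbar)
=\big(\c(\llambda_h,\v_h(\Xbar))-\c_h(\llambda_h,\v_h(\Xbar))\big)
-\big(\c(\mmu_h,\u_h(\Xbar))-\c_h(\mmu_h,\u_h(\Xbar))\big),
\]
so that the whole discrepancy between the two forms reduces to two quadrature errors of the kind studied in Section~\ref{sec:coupling_term}.

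Next I would estimate these two brackets by the bounds already available: \eqref{eq:quaderr_l2} for the choice $(\LL_0,\c_0)$ and~\eqref{eq:h1_est_fin} for $(\LL_1,\c_1)$. Since $\llambda_h$ and $\mmu_h$ are the Lagrange-multiplier components of $\U_h$ and $\V_h$, while $\v_h$ and $\u_h$ are their velocity components, each quadrature error is controlled by the corresponding factors of $\|\U_h\|_\VV$ and $\|\V_h\|_\VV$. This yields
\[
\abs{\Bil(\U_h,\V_h;\Xbar)-\Bil_h(\U_h,\V_h;\Xbar)}\le \varepsilon(h)\,\|\U_h\|_\VV\,\|\V_h\|_\VV,
\]
with $\varepsilon(h)=C\,h_\B^{1/2}|\log h_\B|$ in the $\c_0$ case and $\varepsilon(h)=C\big(h_\B^{1/2}|\log h_\B|+h_\B/h_\Omega\big)$ in the $\c_1$ case.

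The perturbation step is then routine. Because $\VV_h$ is finite dimensional, Proposition~\ref{prop:infsup_discrete} guarantees that for every $\U_h\in\VV_h$ the supremum is attained at some $\V_h\in\VV_h$ with $\Bil(\U_h,\V_h;\Xbar)\ge\tilde\eta\,\|\U_h\|_\VV\|\V_h\|_\VV$. Writing $\Bil_h=\Bil-(\Bil-\Bil_h)$ and subtracting the bound above gives $\Bil_h(\U_h,\V_h;\Xbar)\ge(\tilde\eta-\varepsilon(h))\,\|\U_h\|_\VV\|\V_h\|_\VV$, which is exactly the desired inf-sup inequality for $\Bil_h$ with constant $\tilde\eta-\varepsilon(h)$.

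The crux is to make $\varepsilon(h)$ strictly smaller than $\tilde\eta$ uniformly, so that $\hat\eta$ can be taken as the fixed fraction $\tilde\eta/2$ independent of $h$. For $(\LL_0,\c_0)$ this is immediate, since $\varepsilon(h)\to 0$ as $h_\B\to 0$ and Assumption~\ref{assump:mesh} keeps $h_\B<1$; it suffices to restrict to $h$ below a threshold depending only on $\tilde\eta$ and $C$. For $(\LL_1,\c_1)$ the term $h_\B/h_\Omega$ does not vanish when the two meshes are refined comparably, and this is the main obstacle: closing the argument requires the solid mesh to be sufficiently fine relative to the fluid one, i.e.\ $h_\B/h_\Omega$ below a constant threshold fixed by $\tilde\eta$ and $C$. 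Under such a condition one again obtains $\varepsilon(h)\le\tilde\eta/2$ and hence $\hat\eta=\tilde\eta/2$, genuinely independent of $h$.
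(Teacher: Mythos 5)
Your reduction of $\Bil-\Bil_h$ to the two quadrature-error brackets is correct, and the Strang-type perturbation step itself is sound; the problem is that this route cannot deliver the proposition as stated, and the restriction you flag at the end is a genuine gap, not a technicality. The statement claims $\hat\eta$ independent of $h$ under Assumptions~\ref{assump:mesh} and~\ref{assump:quadrature} alone. Your argument yields $\tilde\eta-\varepsilon(h)$, which is only useful once $\varepsilon(h)\le\tilde\eta/2$: for $(\LL_0,\c_0)$ this silently adds the requirement that $h_\B$ lie below a threshold (not part of the assumptions, and not recoverable by any compactness argument since there are infinitely many admissible meshes above any threshold), and for $(\LL_1,\c_1)$ it adds the requirement that $h_\B/h_\Omega$ be small. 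The latter is precisely the condition the paper is at pains to avoid: the surrounding text stresses that stability holds with no assumption on how the meshes interact, and all the numerical tests are run with $h_\Omega/h_\B$ kept \emph{constant} under refinement (indeed $h_\B\approx(4/3)h_\Omega$ in the time-dependent test), a regime in which your $\c_1$ bound $\varepsilon(h)=C\big(h_\B^{1/2}|\log h_\B|+h_\B/h_\Omega\big)$ never becomes small. So your proof certifies stability only in the regime the proposition (and the conditioning analysis built on it) does not want to assume.

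The reason the result is true without these restrictions — and the route the paper takes, by invoking \cite{BCG24} together with \cite{xu2003some} — is that stability of $\Bil_h$ does not require the quadrature error to be \emph{small}, only the inexact form to be \emph{bounded}. Look at the structure of~\eqref{eq:def_Bil_h}: the multiplier is coupled to the solid displacement through the exact form $\c(\mmu_h,\X_h)$, which lives entirely on $\T_h^\B$ and carries no quadrature error; only the terms composed with $\Xbar$ are inexact. One then verifies the Brezzi/Xu--Zikatanov conditions directly: $\a_f+\a_s$ is coercive on the primal variables, the pressure is handled by the Stokes inf-sup for $(\Vline_h,Q_h)$, and $\mmu_h$ is controlled by testing with $\Y_h$ through the exact coupling (for $\c=\c_1$, simply $\Y_h=\mmu_h$ gives $\c_1(\mmu_h,\mmu_h)=\|\mmu_h\|_{1,\B}^2$), while the inexact cross term $\c_h(\mmu_h,\v_h(\Xbar))$ enters the estimates only through its continuity — exactly the uniform bound of Proposition~\ref{prop:continuity_ch}, which holds independently of $h$ and of the ratio $h_\B/h_\Omega$. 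This direct saddle-point argument produces $\hat\eta$ independent of $h$ under the stated assumptions. The perturbation strategy is intrinsically lossy here: it asks the quadrature error to beat the inf-sup constant, whereas the structure of $\Bil_h$ only needs it to be bounded.
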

	
	Hence, taking into account the quadrature errors, the following convergence theorems hold in the case of $\c_{0,h}$ and $\c_{1,h}$, respectively.
	
	\begin{teo}\label{theo:c0h}
		Let $\c_h=\c_{0,h}$. Let $\U=(\u,\X,\llambda,p)$ be the solution of the continuous Problem~\ref{pro:stat_gen_unified}, $\U_h=(\u_h,\X_h,\llambda_h,p_h)$ the solution of the discrete Problem~\ref{pro:dis_stat_gen_unified}, and $\U_h^\star=(\u_h^\star,\X_h^\star,\llambda_h^\star,p_h^\star)$ the solution of Problem~\ref{pro:ch_stat_gen_unified}. Under Assumptions~\ref{assump:mesh} and~\ref{assump:quadrature}, the following estimate holds true
		\begin{equation*}
			\begin{aligned}
				&\norm{\u-\u_h^\star}_{1,\Omega} + \norm{p-p_h^\star}_{0,\Omega} + \norm{\X-\X_h^\star}_{1,\B} + \norm{\llambda-\llambda_h^\star}_{\LL} \\
				&\quad\le C\,\bigg(\inf_{\v_h\in\Vline_h}\norm{\u-\v_h}_{1,\Omega} + \inf_{q_h\in Q_h}\norm{p-q_h}_{0,\Omega} + \inf_{\Y_h\in\Sline_h}\norm{\X-\Y_h}_{1,\B} + \inf_{\mmu_h\in\LL_h}\norm{\llambda-\mmu_h}_{\LL} \\
				&\qquad\qquad+h_\B^{1/2}|\log h_\B|\|\u_h\|_{1,\Omega}+ h_\B^{1/2}|\log h_\B|\|\llambda_h\|_{\LL}\bigg).
			\end{aligned}
		\end{equation*}
	\end{teo}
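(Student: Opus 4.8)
The plan is to use a Strang-type argument, splitting the total error by the triangle inequality into the discretization error of the exactly assembled problem and the perturbation produced by the inexact quadrature. Writing $\U_h$ for the solution of Problem~\ref{pro:dis_stat_gen_unified}, I would start from
\[
\norm{\U-\U_h^\star}_\VV \le \norm{\U-\U_h}_\VV + \norm{\U_h-\U_h^\star}_\VV.
\]
The first summand is controlled directly by the convergence result of Theorem~\ref{theo:brezzi}, which supplies precisely the four best-approximation infima appearing in the claim. It remains to absorb the consistency term $\norm{\U_h-\U_h^\star}_\VV$ into the last two quadrature contributions.

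To bound the consistency term, I set $\W_h=\U_h-\U_h^\star\in\VV_h$ and invoke the discrete inf-sup stability of $\Bil_h$ from Proposition~\ref{prop:infsup_discrete_ch}, which gives
\[
\hat\eta\,\norm{\W_h}_\VV \le \sup_{\V_h\in\VV_h}\frac{\Bil_h(\W_h,\V_h;\Xbar)}{\norm{\V_h}_\VV}.
\]
Since $\U_h$ and $\U_h^\star$ solve Problems~\ref{pro:dis_stat_gen_unified} and~\ref{pro:ch_stat_gen_unified} with identical right-hand sides, the defining relations yield $\Bil_h(\U_h^\star,\V_h;\Xbar)=\Bil(\U_h,\V_h;\Xbar)$ for every $\V_h$, so by linearity in the first argument the numerator reduces to the consistency mismatch $\Bil_h(\U_h,\V_h;\Xbar)-\Bil(\U_h,\V_h;\Xbar)$.

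Next I would compute this mismatch explicitly by comparing the definitions~\eqref{eq:def_Bil} and~\eqref{eq:def_Bil_h}. The bilinear forms $\a_f$ and $\a_s$, the divergence couplings, and the exactly assembled solid terms $\c(\mmu_h,\X_h)$ and $\c(\llambda_h,\Y_h)$ all cancel, leaving only the two coupling contributions in which $\c$ has been replaced by $\c_h$:
\[
\Bil_h(\U_h,\V_h;\Xbar)-\Bil(\U_h,\V_h;\Xbar) = \big(\c(\mmu_h,\u_h(\Xbar))-\c_h(\mmu_h,\u_h(\Xbar))\big) - \big(\c(\llambda_h,\v_h(\Xbar))-\c_h(\llambda_h,\v_h(\Xbar))\big).
\]
Each term on the right is exactly a quadrature error of the form estimated in~\eqref{eq:quaderr_l2} with $\c_h=\c_{0,h}$. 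Applying that bound and using $\norm{\mmu_h}_\LL,\norm{\v_h}_{1,\Omega}\le\norm{\V_h}_\VV$ yields
\[
\abs{\Bil_h(\W_h,\V_h;\Xbar)} \le C\,h_\B^{1/2}\abs{\log h_\B}\,\big(\norm{\u_h}_{1,\Omega}+\norm{\llambda_h}_\LL\big)\,\norm{\V_h}_\VV.
\]
Dividing by $\norm{\V_h}_\VV$, taking the supremum, and dividing by $\hat\eta$ bounds $\norm{\W_h}_\VV$ by exactly the last two terms of the claimed estimate; summing with the Theorem~\ref{theo:brezzi} contribution finishes the proof.

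The only delicate point is the bookkeeping in the difference $\Bil_h-\Bil$: one must verify that the exact pieces $\c(\mmu_h,\X_h)$ and $\c(\llambda_h,\Y_h)$, which are assembled identically in both formulations, cancel, so that the residual contains \emph{no} quadrature error on the solid-only arguments $\X_h,\Y_h$ and reduces to precisely the two $\c-\c_h$ errors acting on the composed fields $\u_h(\Xbar)$ and $\v_h(\Xbar)$. Everything else is a routine application of the three quoted results, and the dependence on the discrete norms $\norm{\u_h}_{1,\Omega}$ and $\norm{\llambda_h}_\LL$ (rather than on the continuous solution norms) is inherited verbatim from the right-hand side of~\eqref{eq:quaderr_l2}.
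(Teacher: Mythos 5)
Your proof is correct and takes exactly the route the paper intends: the paper itself defers this result to~\cite{BCG24}, and the ingredients it recalls for that purpose --- Theorem~\ref{theo:brezzi}, the discrete inf-sup stability of $\Bil_h$ in Proposition~\ref{prop:infsup_discrete_ch}, and the quadrature error bound~\eqref{eq:quaderr_l2} --- are assembled in your Strang-type argument precisely as in the cited proof. In particular, your key step is done correctly: since both discrete problems share the same right-hand side, the consistency mismatch $\Bil_h(\U_h,\V_h;\Xbar)-\Bil(\U_h,\V_h;\Xbar)$ reduces, after cancellation of the exactly assembled solid-only couplings, to the two $\c_0-\c_{0,h}$ quadrature errors on $\u_h(\Xbar)$ and $\v_h(\Xbar)$, which is exactly what produces the terms $h_\B^{1/2}|\log h_\B|\,\|\u_h\|_{1,\Omega}$ and $h_\B^{1/2}|\log h_\B|\,\|\llambda_h\|_{\LL}$ in the statement.
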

	
	\begin{teo}\label{theo:c1h}
		Let $\c_h=\c_{1,h}$. Let $\U=(\u,\X,\llambda,p)$ be the solution of the continuous Problem~\ref{pro:stat_gen_unified}, $\U_h=(\u_h,\X_h,\llambda_h,p_h)$ the solution of the discrete Problem~\ref{pro:dis_stat_gen_unified}, and $\U_h^\star=(\u_h^\star,\X_h^\star,\llambda_h^\star,p_h^\star)$ the solution of Problem~\ref{pro:ch_stat_gen_unified}. Under Assumptions~\ref{assump:mesh} and~\ref{assump:quadrature}, the following estimate holds true
		\begin{equation*}
			\begin{aligned}
				&\norm{\u-\u_h^\star}_{1,\Omega} + \norm{p-p_h^\star}_{0,\Omega} + \norm{\X-\X_h^\star}_{1,\B} + \norm{\llambda-\llambda_h^\star}_{1,\B} \\
				&\qquad\le C\,\bigg(\inf_{\v_h\in\Vline_h}\norm{\u-\v_h}_{1,\Omega} + \inf_{q_h\in Q_h}\norm{p-q_h}_{0,\Omega} + \inf_{\Y_h\in\Sline_h}\norm{\X-\Y_h}_{1,\B} + \inf_{\mmu_h\in\LL_h}\norm{\llambda-\mmu_h}_{1,\B} \\
				&\quad\qquad\qquad+\big( h_\B^{1/2}|\log h_\B| + \frac{h_\B}{h_\Omega} \big)(\|\u_h\|_{1,\Omega}+\|\llambda_h\|_{1,\B})\bigg).
			\end{aligned}
		\end{equation*}
	\end{teo}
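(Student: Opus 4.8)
The plan is to proceed by a Strang-type perturbation argument, inserting the \emph{exact} discrete solution $\U_h$ of Problem~\ref{pro:dis_stat_gen_unified} as an intermediate quantity and splitting
\begin{equation*}
\norm{\U-\U_h^\star}_\VV \le \norm{\U-\U_h}_\VV + \norm{\U_h-\U_h^\star}_\VV .
\end{equation*}
The first summand is controlled at once by the convergence result in Theorem~\ref{theo:brezzi}, which produces the four best-approximation infima on the right-hand side of the claim, recalling that for $\c=\c_1$ the multiplier norm $\norm{\cdot}_\LL$ coincides with $\norm{\cdot}_{1,\B}$. The whole effect of the inexact quadrature is thus concentrated in the second summand $\norm{\U_h-\U_h^\star}_\VV$, which I would estimate next.

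For this term I would invoke the discrete inf-sup stability of $\Bil_h$ from Proposition~\ref{prop:infsup_discrete_ch}. Since $\U_h-\U_h^\star\in\VV_h$, there is a constant $\hat\eta>0$, independent of $h$, with
\begin{equation*}
\hat\eta\,\norm{\U_h-\U_h^\star}_\VV \le \sup_{\V_h\in\VV_h}\frac{\Bil_h(\U_h-\U_h^\star,\V_h;\Xbar)}{\norm{\V_h}_\VV}.
\end{equation*}
The key observation is that $\U_h$ and $\U_h^\star$ solve Problems~\ref{pro:dis_stat_gen_unified} and~\ref{pro:ch_stat_gen_unified} with \emph{identical} right-hand sides, so that $\Bil_h(\U_h^\star,\V_h;\Xbar)=\Bil(\U_h,\V_h;\Xbar)$ for every $\V_h\in\VV_h$. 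Substituting this identity into the numerator and comparing the definitions~\eqref{eq:def_Bil} and~\eqref{eq:def_Bil_h} of $\Bil$ and $\Bil_h$, the viscous forms $\a_f,\a_s$, the divergence pairings, and the exactly integrated contributions $\c(\mmu_h,\X_h)$ and $\c(\llambda_h,\Y_h)$ cancel, leaving only the quadrature discrepancy in the coupling term:
\begin{equation*}
\Bil_h(\U_h-\U_h^\star,\V_h;\Xbar)=(\c-\c_h)(\mmu_h,\u_h(\Xbar))-(\c-\c_h)(\llambda_h,\v_h(\Xbar)).
\end{equation*}

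It then remains to bound the two surviving terms by the $H^1$ quadrature estimate~\eqref{eq:h1_est_fin}, valid under Assumptions~\ref{assump:mesh} and~\ref{assump:quadrature}. Applying it to the pairs $(\mmu_h,\u_h)$ and $(\llambda_h,\v_h)$ and using $\norm{\mmu_h}_{1,\B},\norm{\v_h}_{1,\Omega}\le\norm{\V_h}_\VV$ yields
\begin{equation*}
\abs{\Bil_h(\U_h-\U_h^\star,\V_h;\Xbar)}\le C\Big(h_\B^{1/2}\abs{\log h_\B}+\tfrac{h_\B}{h_\Omega}\Big)\big(\norm{\u_h}_{1,\Omega}+\norm{\llambda_h}_{1,\B}\big)\norm{\V_h}_\VV .
\end{equation*}
Dividing by $\norm{\V_h}_\VV$, passing to the supremum, and dividing by $\hat\eta$ reproduces exactly the quadrature term in the statement, and the triangle inequality concludes the argument. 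I expect the only delicate point to be the bookkeeping in the cancellation step: one must verify that the shared (exactly evaluated) parts of $\Bil$ and $\Bil_h$ agree identically on $\VV_h$, so that precisely the $\c-\c_h$ mismatch remains. Everything else is inherited verbatim from the stated stability and quadrature results, and, crucially, no smallness hypothesis on the cut cells enters, since both $\hat\eta$ and the constant in~\eqref{eq:h1_est_fin} are independent of the interface position.
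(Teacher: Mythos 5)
Your proof is correct and is essentially the intended argument: the paper does not prove Theorem~\ref{theo:c1h} inline (it recalls it from~\cite{BCG24}), but the fingerprint of the underlying proof — the norms $\|\u_h\|_{1,\Omega}+\|\llambda_h\|_{1,\B}$ of the \emph{exact-coupling} discrete solution appearing in the quadrature term of the stated bound — is precisely what your Strang-type decomposition through $\U_h$, combined with Proposition~\ref{prop:infsup_discrete_ch} and the estimate~\eqref{eq:h1_est_fin}, produces. Your cancellation identity $\Bil_h(\U_h-\U_h^\star,\V_h;\Xbar)=(\c-\c_h)(\mmu_h,\u_h(\Xbar))-(\c-\c_h)(\llambda_h,\v_h(\Xbar))$ checks out against the definitions~\eqref{eq:def_Bil} and~\eqref{eq:def_Bil_h}, since both discrete problems share the same right-hand side.
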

	
	We observe that the inexact assembly of the coupling form $\c_0$ gives a convergent method whenever the mesh sizes $h_\Omega$ and $h_\B$ decay to zero. On the other hand, if $\c_1$ is considered, we obtain an optimal method under the additional condition that $h_\B/h_\Omega$ tends to zero fast enough. This behavior was confirmed by the numerical tests we presented in~\cite{boffi2022interface,boffi2022parallel,BCG24}.
	
	In the statement of Problem~\ref{pro:ch_stat_gen_unified}, without
	affecting the generality of the results, we assumed
	that the three terms on the right hand side are computed exactly. 
	Actually, the error produced by inexact integration can be easily estimated
	by standard arguments as in~\cite[Chap. 4, Sect. 4.1]{ciarlet2002finite}. The interested reader
	can find these results in~\cite[Lemma 1]{BCG24}.
	
	\section{Estimate of the condition number}\label{sec:conditioning}
	
	In this section, we study the condition number of the matrices $\Sys$ and $\Sys_h$ associated with Problem~\ref{pro:dis_stat_gen_unified} and Problem~\ref{pro:ch_stat_gen_unified}, respectively.
	
	A general framework for studying the condition number of linear systems arising from finite element approximations was presented by Ern and Guermond in~\cite{ern-guermond}.  The Euclidean condition number of our system behaves as described by the following theorem, where the notation $\Sys_\square$ indicates both $\Sys$ and $\Sys_h$, as well as $\Bil_\square$ stands for both $\Bil$ and $\Bil_h$.
	
	\begin{teo}\label{teo:ern_guermond}
		Let $C_\star,\, C^\star$ be two positive constants, independent of the mesh sizes, arising from the equivalence between the $L^2$ norm of a finite element function and the Euclidean norm of the corresponding vector of degrees of freedom (see~\cite[Sect. 2.3]{ern-guermond}). Then, the following bounds hold
		\begin{equation}
			C_\star \frac\omega\tau \le \cond{\Sys_\square} \le C^\star \frac\omega\tau,
		\end{equation}
		where
		\begin{equation}\label{eq:omega_tau}
			\omega = \sup_{\U_h\in\VV_h}\sup_{\V_h\in\VV_h} \frac{\Bil_\square(\U_h,\V_h;\Xbar)}{\|\U_h\|_{0}\|\V_h\|_{0}},\qquad
			\tau = \inf_{\U_h\in\VV_h}\sup_{\V_h\in\VV_h} \frac{\Bil_\square(\U_h,\V_h;\Xbar)}{\|\U_h\|_{0}\|\V_h\|_{0}}
		\end{equation}
		and
		\begin{equation}\label{eq:l2_VV}
			\|\V_h\|_{0}^2 = \|\v_h\|_{0,\Omega}^2 + \|\Y_h\|_{0,\B}^2 + \|\mmu_h\|_{0,\B}^2 + \|q_h\|_{0,\Omega}^2.
		\end{equation}
	\end{teo}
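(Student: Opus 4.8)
The plan is to read Theorem~\ref{teo:ern_guermond} as an instance of the abstract conditioning estimate of Ern and Guermond~\cite{ern-guermond}, the only structural input being that $\Sys_\square$ is the matrix representing the bilinear form $\Bil_\square(\cdot,\cdot;\Xbar)$ in the finite element basis of $\VV_h$. Fixing such a basis, I would denote by $\mathsf{U}$ and $\mathsf{V}$ the vectors of degrees of freedom associated with $\U_h,\V_h\in\VV_h$, so that, comparing~\eqref{eq:def_Bil} (resp.~\eqref{eq:def_Bil_h}) with the block structure~\eqref{eq:matrix} (resp.~\eqref{eq:matrix_h}), one has the identity $\Bil_\square(\U_h,\V_h;\Xbar)=\mathsf{V}^\top\Sys_\square\mathsf{U}$, where the test function supplies the row index and the trial function the column index. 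Since $\Bil_\square$ is inf-sup stable by Proposition~\ref{prop:infsup_discrete} or Proposition~\ref{prop:infsup_discrete_ch}, the matrix $\Sys_\square$ is invertible, hence its Euclidean condition number is finite and equals the ratio $\sigma_{\max}(\Sys_\square)/\sigma_{\min}(\Sys_\square)$ of its extreme singular values.

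Next I would record the variational characterization of these singular values. A double supremum together with the Cauchy--Schwarz inequality (saturated at $\mathsf{V}=\Sys_\square\mathsf{U}$) yields
\[
\sigma_{\max}(\Sys_\square)=\sup_{\mathsf{U}}\sup_{\mathsf{V}}\frac{\mathsf{V}^\top\Sys_\square\mathsf{U}}{\|\mathsf{U}\|_{\ell^2}\|\mathsf{V}\|_{\ell^2}},\qquad
\sigma_{\min}(\Sys_\square)=\inf_{\mathsf{U}}\sup_{\mathsf{V}}\frac{\mathsf{V}^\top\Sys_\square\mathsf{U}}{\|\mathsf{U}\|_{\ell^2}\|\mathsf{V}\|_{\ell^2}},
\]
where $\|\cdot\|_{\ell^2}$ is the Euclidean norm of the coefficient vector. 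These two quantities are exactly $\omega$ and $\tau$ of~\eqref{eq:omega_tau}, except that the latter measure $\U_h,\V_h$ through the $L^2$-type norm~\eqref{eq:l2_VV} rather than through the Euclidean norm of their coefficients.

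The bridge between the two metrics is the norm equivalence recalled in~\cite[Sect. 2.3]{ern-guermond}: under the quasi-uniformity Assumption~\ref{assump:mesh} there exist constants $C_\star,C^\star>0$, independent of the mesh sizes, such that $C_\star\|\mathsf{V}\|_{\ell^2}\le\|\V_h\|_0\le C^\star\|\mathsf{V}\|_{\ell^2}$ for every $\V_h\in\VV_h$. Inserting this double inequality into the definitions of $\omega$ and $\tau$ and using the characterizations above, I would obtain
\[
\frac{1}{(C^\star)^2}\sigma_{\max}(\Sys_\square)\le\omega\le\frac{1}{C_\star^2}\sigma_{\max}(\Sys_\square),\qquad
\frac{1}{(C^\star)^2}\sigma_{\min}(\Sys_\square)\le\tau\le\frac{1}{C_\star^2}\sigma_{\min}(\Sys_\square).
\]
Taking the quotient $\omega/\tau$, the singular values combine into $\sigma_{\max}(\Sys_\square)/\sigma_{\min}(\Sys_\square)=\cond{\Sys_\square}$, while the equivalence constants combine into the factors $C_\star^2/(C^\star)^2$ and $(C^\star)^2/C_\star^2$; relabeling these two factors as $C_\star$ and $C^\star$ gives precisely the asserted two-sided bound.

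I expect the only real difficulty to be bookkeeping rather than analysis: the global vector $\mathsf{U}$ collects four groups of degrees of freedom (velocity, displacement, multiplier and pressure) living on the two independent meshes $\T_{h/2}^\Omega$ and $\T_h^\B$, so one must ensure that a single pair $C_\star,C^\star$ controls the $L^2$-to-Euclidean equivalence simultaneously on the four blocks $\Vline_h$, $\Sline_h$, $\LL_h$ and $Q_h$. This is exactly where Assumption~\ref{assump:mesh} enters: quasi-uniformity provides the block-wise equivalences with shape-regularity constants, and taking $C_\star$ (resp.\ $C^\star$) as the smallest (resp.\ largest) of the four resulting constants yields the global equivalence used above. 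It is worth stressing that, once this is done, all of the geometric information on the relative position of the fluid and mapped solid meshes --- and in particular the presence of small cut cells --- is confined to the ratio $\omega/\tau$ and never enters the constants $C_\star,C^\star$, which is the point exploited in the conditioning analysis that follows.
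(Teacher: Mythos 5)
Your reconstruction is the natural one to attempt here: the paper itself gives no proof of this theorem (it is quoted as an instance of the Ern--Guermond framework, with a pointer to~\cite[Sect.~2.3]{ern-guermond}), and the skeleton of your argument --- represent $\Bil_\square$ by $\Sys_\square$, characterize $\sigma_{\max}(\Sys_\square)$ and $\sigma_{\min}(\Sys_\square)$ by the double sup and the inf-sup over coefficient vectors, invoke invertibility via Propositions~\ref{prop:infsup_discrete} and~\ref{prop:infsup_discrete_ch}, and then transfer between coefficient vectors and finite element functions --- is exactly the standard argument behind the cited result. The two variational identities for the extreme singular values are correct as stated.

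There is, however, a genuine flaw in the bridging step. The claim that $C_\star\|\mathsf{V}\|_{\ell^2}\le\|\V_h\|_0\le C^\star\|\mathsf{V}\|_{\ell^2}$ holds with constants \emph{independent of the mesh sizes} is false: for a nodal basis on a quasi-uniform triangulation of a two-dimensional domain one has $\|\V_h\|_0^2=\mathsf{V}^\top\Mmatr\,\mathsf{V}$ with mass-matrix eigenvalues of order $h^2$, so the correct equivalence is $c_\star\, h\,\|\mathsf{V}\|_{\ell^2}\le\|\V_h\|_0\le c^\star\, h\,\|\mathsf{V}\|_{\ell^2}$; the factor $h^{d/2}$ cannot be removed. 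This would be harmless if a \emph{single} scaling factor applied to the whole vector, since it would cancel in the ratio $\omega/\tau$ exactly as in your computation. But here the four blocks live on two independent meshes: the $\v_h$ and $q_h$ blocks scale with $h_\Omega$, the $\Y_h$ and $\mmu_h$ blocks with $h_\B$. The honest equivalence is therefore $\|\V_h\|_0\approx\|\mathsf{D}\mathsf{V}\|_{\ell^2}$ with a block-diagonal scaling $\mathsf{D}$, and what $\omega/\tau$ controls (up to $h$-independent constants) is $\cond{\mathsf{D}^{-1}\Sys_\square\mathsf{D}^{-1}}$ rather than $\cond{\Sys_\square}$; passing from one to the other costs a factor of order $\left(\max(h_\Omega,h_\B)/\min(h_\Omega,h_\B)\right)^{2}$. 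Your proposed fix --- taking the smallest and largest of the four block constants --- does not resolve this, because those constants differ by powers of $h_\Omega/h_\B$, not by mesh-independent amounts, and Assumption~\ref{assump:mesh} (quasi-uniformity of each mesh separately) places no bound on that ratio. The theorem, and your proof of it, therefore require the additional hypothesis that $h_\Omega/h_\B$ stays bounded above and below --- a hypothesis the paper uses implicitly (all its refinements keep $h_\Omega/h_\B$ constant) but which your argument claims, incorrectly, to derive from quasi-uniformity alone.
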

	
	In the remainder of this section, in order to bound $\cond{\Sys_\square}$ in terms of $h_\Omega$ and $h_\B$, we study $\tau$ and $\omega$.
	The structure of the proof is the same for both the exact and inexact computation of the coupling term and for the two choices of $\c$, and can be organized into the following three steps:
	
	\begin{enumerate}[i)]
		\item we prove an $h$-dependent norm equivalence between the $L^2$-norm defined in \eqref{eq:l2_VV} and~$\|\cdot\|_\VV$,
		\item we prove bounds for $\omega$ and $\tau$ by exploiting the well-posedness of the problem and continuity of the bilinear form $\Bil_\square$,
		\item we apply Theorem~\ref{teo:ern_guermond}.
	\end{enumerate}
	
	The first step, which does not depend on the assembly technique for the coupling term, is presented in the following lemma.
	
	\begin{lem}\label{lem:equiv_norme}
		Under Assumption~\ref{assump:mesh}, there exist two positive constants $\underline{C},\,\overline{C}$ such that for all $\V_h\in\VV_h$ we have
		\begin{equation*}
			\underline{C} h_\B^{2-2\ell} \|\V_h\|_0^2 \le \|\V_h\|_\VV^2 \le \overline{C} ( h_\Omega^{-2} + h_\B^{-2}) \|\V_h\|_0^2\qquad\text{for $\LL=\LL_\ell$},
		\end{equation*}
		where $\ell=0,1$.
	\end{lem}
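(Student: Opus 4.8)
The plan is to establish the two inequalities separately, working component-by-component on the four blocks of $\V_h = (\v_h, \Y_h, \mmu_h, q_h)$. The key tools are the standard finite element inverse and norm-equivalence estimates that hold on quasi-uniform meshes (Assumption~\ref{assump:mesh}), applied separately to the fluid mesh $\T_h^\Omega$ (for $\v_h$ and $q_h$) and the solid mesh $\T_h^\B$ (for $\Y_h$ and $\mmu_h$). The only subtlety is that the solid components live on $\B$ with mesh size $h_\B$, while the fluid components live on $\Omega$ with mesh size $h_\Omega$, so the two blocks contribute different powers of the mesh size, and the $\LL$-norm of $\mmu_h$ differs between the cases $\ell=0$ and $\ell=1$.

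For the upper bound $\|\V_h\|_\VV^2 \le \overline{C}(h_\Omega^{-2}+h_\B^{-2})\|\V_h\|_0^2$, I would recall the inverse inequality on quasi-uniform meshes: for a piecewise polynomial $w_h$ on a mesh of size $h$ one has $\|\grad w_h\|_{0} \le C h^{-1}\|w_h\|_0$, hence $\|w_h\|_1^2 \le C h^{-2}\|w_h\|_0^2$. Applying this to $\v_h$ on $\T_{h/2}^\Omega$ gives $\|\v_h\|_{1,\Omega}^2 \le C h_\Omega^{-2}\|\v_h\|_{0,\Omega}^2$; applying it to $\Y_h$ on $\T_h^\B$ gives $\|\Y_h\|_{1,\B}^2 \le C h_\B^{-2}\|\Y_h\|_{0,\B}^2$. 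The pressure term $\|q_h\|_{0,\Omega}^2$ appears identically in both norms, so it is controlled by either factor. The multiplier term requires the two cases: when $\LL=\LL_1$ we have $\|\mmu_h\|_\LL^2 = \|\mmu_h\|_{1,\B}^2 \le C h_\B^{-2}\|\mmu_h\|_{0,\B}^2$ by the same inverse inequality, while when $\LL=\LL_0$ the $\LL_0$-norm is the $\Hubd$ dual norm, which is bounded by the $\LdBd$ norm, so $\|\mmu_h\|_\LL^2 \le C\|\mmu_h\|_{0,\B}^2$ with no negative power of $h_\B$ at all. Summing the four contributions and bounding each mesh-dependent factor by $h_\Omega^{-2}+h_\B^{-2}$ yields the claim in both cases (for $\ell=0$ the factor is even more favorable, but the stated bound is certainly valid).

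For the lower bound $\underline{C}\,h_\B^{2-2\ell}\|\V_h\|_0^2 \le \|\V_h\|_\VV^2$, I would simply note that $\|\cdot\|_\VV$ dominates $\|\cdot\|_0$ term-by-term \emph{except} for the multiplier block when $\ell=1$. Indeed $\|\v_h\|_{1,\Omega} \ge \|\v_h\|_{0,\Omega}$, $\|\Y_h\|_{1,\B} \ge \|\Y_h\|_{0,\B}$, and the pressure terms coincide, so for $\ell=0$, where $\|\mmu_h\|_{\LL_0} \ge c\|\mmu_h\|_{0,\B}$ (the $L^2$ norm controls the weaker dual norm from below up to a constant, using that $\LL_h \subset \LdBd$), the constant-factor lower bound holds with $h_\B^{2-2\ell}=h_\B^2$; here one must be slightly careful, but since $h_\B<1$ the factor $h_\B^2$ only weakens the inequality and the estimate $\|\mmu_h\|_{\LL_0}\ge c\|\mmu_h\|_{0,\B}$ makes even the sharper $\ell=0$ statement go through. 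For $\ell=1$, the troublesome term is $\|\mmu_h\|_{1,\B}^2$ versus $\|\mmu_h\|_{0,\B}^2$: here the $\VV$-norm is the \emph{stronger} one, so to bound $\|\mmu_h\|_{0,\B}^2$ from above by $\|\mmu_h\|_{1,\B}^2$ trivially gives a constant, which is the wrong direction. Instead, the factor $h_\B^{2-2\ell}=h_\B^0=1$ when $\ell=1$—wait, let me reconsider: the exponent $2-2\ell$ equals $2$ for $\ell=0$ and $0$ for $\ell=1$, so for $\ell=1$ the claimed lower bound is simply $\underline C\|\V_h\|_0^2 \le \|\V_h\|_\VV^2$, which holds term-by-term with no mesh factor at all, since each $\VV$-norm summand dominates its $\|\cdot\|_0$ counterpart.

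The genuine obstacle, and the step requiring the most care, is therefore the $\ell=0$ lower bound, where the factor $h_\B^{2}$ must be extracted. This is where the dual norm $\|\mmu_h\|_{\Hubd}$ must be compared to $\|\mmu_h\|_{0,\B}$ from below: by definition $\|\mmu_h\|_{\Hubd} = \sup_{\Y\in\Hub} \c_0(\mmu_h,\Y)/\|\Y\|_{1,\B}$, and choosing $\Y=\mmu_h$ gives $\|\mmu_h\|_{\Hubd} \ge \|\mmu_h\|_{0,\B}^2/\|\mmu_h\|_{1,\B}$; invoking the inverse inequality $\|\mmu_h\|_{1,\B} \le C h_\B^{-1}\|\mmu_h\|_{0,\B}$ then yields $\|\mmu_h\|_{\Hubd} \ge c\,h_\B\,\|\mmu_h\|_{0,\B}$, i.e. $\|\mmu_h\|_{\LL_0}^2 \ge c\,h_\B^{2}\|\mmu_h\|_{0,\B}^2$. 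This is exactly the source of the $h_\B^{2}=h_\B^{2-2\ell}$ factor in the statement, and the inverse inequality valid on quasi-uniform meshes is precisely what makes it work. Combining this lower bound on the multiplier block with the trivial term-by-term lower bounds on the other three blocks completes the $\ell=0$ case and hence the lemma.
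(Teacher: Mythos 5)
Your proof is correct and follows essentially the same route as the paper: inverse inequalities on quasi-uniform meshes plus the inclusion $\LdBd\subset\LL_0$ for the upper bound, the trivial term-by-term comparison for $\ell=1$, and the inverse inequality $\|\mmu_h\|_{0,\B}\le C h_\B^{-1}\|\mmu_h\|_{\LL_0}$ for the $\ell=0$ lower bound (which you derive via duality with the test function $\Y=\mmu_h$, whereas the paper simply cites it among the standard inverse inequalities). The only blemish is the claim $\|\mmu_h\|_{\LL_0}\ge c\|\mmu_h\|_{0,\B}$ in your middle paragraph, which is false in general since the dual norm is the weaker one; your final paragraph correctly supersedes it with the $h_\B$-weighted estimate, which is exactly the source of the $h_\B^{2-2\ell}$ factor.
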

	
	\begin{proof}
		We take first $\LL=\LL_0$. We start proving the upper bound. By taking into account the following standard inverse inequalities (see~\cite{ciarlet2002finite})
		\begin{equation}\label{eq:inv_ineq}
			\begin{aligned}
				&\|\v_h\|_{1,\Omega}\le Ch_{\Omega}^{-1}\|\v_h\|_{0,\Omega}&&\forall\v_h\in\Vline_h,\\
				&\|\Y_h\|_{1,\B}\le Ch_{\B}^{-1}\|\Y_h\|_{0,\B}&&\forall\Y_h\in\Sline_h,\\
				&\|\mmu_h\|_{0,\B} \le Ch_{\B}^{-1}\|\mmu_h\|_{\LL_0}&&\forall\mmu_h\in\LL_h,\\
			\end{aligned}
		\end{equation}		
		the inclusion $\LdBd\subset\LL_0$, and the assumption on the mesh sizes, we can write
		\begin{equation}\label{eq:c0_upper}
			\|\V_h\|_\VV^2 \le C\,(h_{\Omega}^{-2}\|\v_h\|_{0,\Omega}^2 + h_\B^{-2}\|\Y_h\|_{0,\B}^2 + \|\mmu_h\|_{0,\B}^2 + \|q\|_{0,\Omega}^2)\le C\,(h_{\Omega}^{-2}+ h_\B^{-2})\|\V_h\|_0^2,
		\end{equation}
		so that the bound is proved.
		For the lower bound, we exploit an inverse inequality relating $\LL_0$ with $\LdBd$, so that we have
		\begin{equation}
			\begin{aligned}
				\|\V_h\|_\VV^2 &\ge \|\v_h\|_{1,\Omega}^2 + \|\Y_h\|_{1,\B}^2 +  h_\B^{2}\|\mmu_h\|_{0,\B}^2 + \|q\|_{0,\Omega}^2\\
				&\ge C\,h_\B^{2}(\|\v_h\|_{1,\Omega}^2+\|\Y_h\|_{1,\B}^2+\|\mmu_h\|_{0,\B}^2 + \|q\|_{0,\Omega}^2)\\
				&\ge C\,h_\B^{2}\|\V_h\|_0^2.
			\end{aligned}
		\end{equation}
		
		Now, we consider $\LL=\LL_1$. The proof of the lower bound is trivial and, for the upper bound, we exploit again the inverse inequalities in~\eqref{eq:inv_ineq}. By working as in~\eqref{eq:c0_upper}, the proof is concluded.
	\end{proof}

	\subsection{Condition number in case of exact coupling}	
	Let us consider the case of exact integration of the coupling term. In the following proposition we estimate $\omega$ and $\tau$.
	
	\begin{prop}\label{prop:tau_omega_ex}
		Under Assumption~\ref{assump:mesh}, there exist two positive constants $\underline{\gamma},\,\overline{\gamma}$ such that the following inequalities hold true
		\begin{equation*}
			\omega=\sup_{\U_h\in\VV_h}\sup_{\V_h\in\VV_h} \frac{\Bil(\U_h,\V_h;\Xbar)}{\|\U_h\|_{0}\|\V_h\|_{0}} \le \overline{\gamma}\,( h_\Omega^{-2} + h_\B^{-2}),\qquad\quad
			\tau=\inf_{\U_h\in\VV_h}\sup_{\V_h\in\VV_h} \frac{\Bil(\U_h,\V_h;\Xbar)}{\|\U_h\|_{0}\|\V_h\|_{0}} \ge \underline{\gamma}\,h_\B^{2-2\ell},
		\end{equation*}
		where $\ell=0,1$ is associated to the choice of $\c$.
	\end{prop}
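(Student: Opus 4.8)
The plan is to derive both estimates from two ingredients already at hand: the continuity of $\Bil$ with an $h$-independent constant and the discrete inf-sup condition of Proposition~\ref{prop:infsup_discrete}, each bridged to the $L^2$-based quantities $\omega$ and $\tau$ through the norm equivalence of Lemma~\ref{lem:equiv_norme}. The whole mesh-dependence is absorbed by that lemma, so no separate argument concerning the intersection of the fluid and mapped solid meshes enters the proof.

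For the bound on $\omega$, I would first record that $\Bil(\cdot,\cdot;\Xbar)$ is continuous on $\VV_h\times\VV_h$ with a constant $M$ independent of $h$, that is $\Bil(\U_h,\V_h;\Xbar)\le M\,\|\U_h\|_\VV\,\|\V_h\|_\VV$. The forms $\a_f$, $\a_s$ and the divergence pairings are manifestly continuous in the $\VV$-norm; the only term requiring care is the coupling $\c(\mmu_h,\v_h(\Xbar))$, whose continuity follows by a change of variables relying on the hypothesis that $\Xbar$ is invertible with Lipschitz inverse. This controls $\norm{\v_h(\Xbar)}_{0,\B}$ (and, for $\c_1$, the composed gradient through the chain rule) by $\norm{\v_h}_{1,\Omega}$ up to constants depending on $\Xbar$ but not on $h$. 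Applying the upper bound of Lemma~\ref{lem:equiv_norme} to both $\|\U_h\|_\VV$ and $\|\V_h\|_\VV$ then gives
\begin{equation*}
\frac{\Bil(\U_h,\V_h;\Xbar)}{\|\U_h\|_0\,\|\V_h\|_0}\le M\,\frac{\|\U_h\|_\VV\,\|\V_h\|_\VV}{\|\U_h\|_0\,\|\V_h\|_0}\le M\,\overline{C}\,(h_\Omega^{-2}+h_\B^{-2}),
\end{equation*}
and taking the supremum over $\U_h,\V_h\in\VV_h$ yields $\omega\le\overline{\gamma}\,(h_\Omega^{-2}+h_\B^{-2})$ with $\overline{\gamma}=M\,\overline{C}$.

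For the bound on $\tau$, I would fix $\U_h\in\VV_h$ and invoke Proposition~\ref{prop:infsup_discrete} to select, by finite dimensionality, a supremizer $\V_h^\star\in\VV_h$ with $\Bil(\U_h,\V_h^\star;\Xbar)\ge\tilde\eta\,\|\U_h\|_\VV\,\|\V_h^\star\|_\VV$. Since the supremum defining $\tau$ is unconstrained, this particular $\V_h^\star$ is an admissible test element; using the lower bound of Lemma~\ref{lem:equiv_norme}, namely $\|\cdot\|_\VV\ge\sqrt{\underline{C}}\,h_\B^{1-\ell}\,\|\cdot\|_0$, on both factors gives
\begin{equation*}
\sup_{\V_h\in\VV_h}\frac{\Bil(\U_h,\V_h;\Xbar)}{\|\U_h\|_0\,\|\V_h\|_0}\ge\frac{\Bil(\U_h,\V_h^\star;\Xbar)}{\|\U_h\|_0\,\|\V_h^\star\|_0}\ge\tilde\eta\,\frac{\|\U_h\|_\VV}{\|\U_h\|_0}\,\frac{\|\V_h^\star\|_\VV}{\|\V_h^\star\|_0}\ge\tilde\eta\,\underline{C}\,h_\B^{2-2\ell}.
\end{equation*}
Taking the infimum over $\U_h\in\VV_h$ then yields $\tau\ge\underline{\gamma}\,h_\B^{2-2\ell}$ with $\underline{\gamma}=\tilde\eta\,\underline{C}$.

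I expect the main obstacle to be the verification that the continuity constant $M$ of $\Bil$ is genuinely independent of $h$. The subtlety lies entirely in the coupling term, because of the composition $\v_h\circ\Xbar$ and the evaluation of $\v_h$ on the deformed configuration: one must ensure that the change-of-variables estimate depends only on the Lipschitz constants of $\Xbar$ and $\Xbar^{-1}$, and not on how the mapped solid elements cut the fluid mesh. Once this $h$-uniform continuity is secured, the rest is a bookkeeping exercise combining it (for $\omega$) and Proposition~\ref{prop:infsup_discrete} (for $\tau$) with the two sides of Lemma~\ref{lem:equiv_norme}.
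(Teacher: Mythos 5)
Your proposal is correct and follows essentially the same route as the paper: $h$-uniform continuity of $\Bil$ (with the coupling term handled via the Lipschitz invertibility of $\Xbar$) combined with the upper bound of Lemma~\ref{lem:equiv_norme} for $\omega$, and Proposition~\ref{prop:infsup_discrete} combined with the lower bound of the same lemma for $\tau$. Your treatment of $\tau$ is in fact slightly more careful than the paper's, which writes the inf-sup factorization directly, while you justify it by selecting a supremizer $\V_h^\star$ and exploiting the positivity of $\Bil(\U_h,\V_h^\star;\Xbar)$.
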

	
	\begin{proof}
		
		The bound of $\omega$ is a consequence of the continuity of $\Bil$, which is easily obtained by combining the continuity of each term at the right hand side of~\eqref{eq:def_Bil} with the Cauchy--Schwarz inequality
		\begin{equation}\label{eq:B_cont_cor}
			\Bil(\U,\V;\Xbar) \le M\|\U\|_\VV\|\V\|_\VV \qquad\forall\U,\V\in\VV.
		\end{equation}
		Using the upper bound in Lemma~\ref{lem:equiv_norme}, we find
		\begin{equation*}
			\sup_{\U_h\in\VV_h}\sup_{\V_h\in\VV_h} \frac{\Bil(\U_h,\V_h;\Xbar)}{\|\U_h\|_{0}\|\V_h\|_{0}} = \sup_{\U_h\in\VV_h}\sup_{\V_h\in\VV_h} \frac{\Bil(\U_h,\V_h;\Xbar)}{\|\U_h\|_{\VV}\|\V_h\|_{\VV}} \cdot \frac{\|\U_h\|_{\VV}}{\|\U_h\|_{0}}\cdot\frac{\|\V_h\|_{\VV}}{\|\V_h\|_{0}}\le C ( h_\Omega^{-2} + h_\B^{-2}).
		\end{equation*}
		On the other hand, Proposition~\ref{prop:infsup_discrete} and the lower bound in Lemma~\ref{lem:equiv_norme} give us
		\begin{equation*}
			\inf_{\U_h\in\VV_h}\sup_{\V_h\in\VV_h} \frac{\Bil(\U_h,\V_h;\Xbar)}{\|\U_h\|_{0}\|\V_h\|_{0}} =
			\inf_{\U_h\in\VV_h}\sup_{\V_h\in\VV_h} \frac{\Bil(\U_h,\V_h;\Xbar)}{\|\U_h\|_{\VV}\|\V_h\|_{\VV}} \cdot \frac{\|\U_h\|_{\VV}}{\|\U_h\|_{0}}\cdot\frac{\|\V_h\|_{\VV}}{\|\V_h\|_{0}}\ge C h_\B^{2-2\ell}.
		\end{equation*}
	\end{proof}
	
	\subsection{Condition number in case of inexact coupling}	
	
	We move to the case of inexact computation of the coupling term. We follow the same lines of the previous section. We stress the fact that one of the main arguments consists in the continuity of the bilinear form $\Bil_h$ and, thus, of the bilinear form $\c_h$.
	
	\begin{prop}\label{prop:continuity_ch}
		Under Assumptions~\ref{assump:mesh} and~\ref{assump:quadrature} and given $\Xbar\in\Winftyd$, there exists a constant $C$, independent of $h$, such that the following inequality holds true
		\begin{equation}
			\c_{h}(\mmu_h,\v_h(\Xbar)) \le C  \|\mmu_h\|_\LL\,\|\v_h\|_{1,\Omega}\qquad\forall\mmu_h\in\LL_h,\,\v_h\in\Vline_h.
		\end{equation}
	\end{prop}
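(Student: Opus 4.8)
The plan is to exploit the quadrature error estimates \eqref{eq:quaderr_l2} and \eqref{eq:h1_est_fin}, already recalled from~\cite{BCG24}, by writing the inexact form as the exact one plus the quadrature error,
\begin{equation*}
	\c_h(\mmu_h,\v_h(\Xbar)) = \c(\mmu_h,\v_h(\Xbar)) + \big(\c_h(\mmu_h,\v_h(\Xbar)) - \c(\mmu_h,\v_h(\Xbar))\big),
\end{equation*}
and bounding the two contributions separately. Throughout, $\c$ and $\c_h$ denote either $\c_0,\c_{0,h}$ or $\c_1,\c_{1,h}$, according to the chosen multiplier space $\LL=\LL_\ell$ with $\ell=0,1$.

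For the exact term I would first establish the composition estimate $\|\v_h(\Xbar)\|_{1,\B}\le C\,\|\v_h\|_{1,\Omega}$, which is the key structural step. Since $\Xbar\in\Winftyd$ is invertible with Lipschitz inverse, the change of variables $\x=\Xbar(\s)$ has Jacobian bounded above and below, while the chain rule gives $\grads(\v_h(\Xbar)) = (\grad\v_h\circ\Xbar)\,\grads\Xbar$ with $\grads\Xbar\in\mathbf{L}^\infty(\B)$. Transporting the integrals defining $\|\v_h(\Xbar)\|_{0,\B}$ and $\|\grads(\v_h(\Xbar))\|_{0,\B}$ back to $\Omega$ then yields the claim, with a constant depending only on the Lipschitz constants of $\Xbar$ and $\Xbar^{-1}$. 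Combined with the continuity of the bilinear form $\c$ on $\LL\times\Hub$ --- the duality pairing \eqref{eq:c_dual} for $\ell=0$ and the $\Hub$ inner product \eqref{eq:c_h1} for $\ell=1$ --- this gives
\begin{equation*}
	\c(\mmu_h,\v_h(\Xbar)) \le \|\mmu_h\|_\LL\,\|\v_h(\Xbar)\|_{1,\B} \le C\,\|\mmu_h\|_\LL\,\|\v_h\|_{1,\Omega}.
\end{equation*}

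For the quadrature error I would invoke \eqref{eq:quaderr_l2} when $\ell=0$ and \eqref{eq:h1_est_fin} when $\ell=1$, both valid precisely under Assumptions~\ref{assump:mesh} and~\ref{assump:quadrature}. It then remains to observe that the mesh-dependent prefactors are bounded independently of $h$: since $h_\B<1$, the map $h_\B\mapsto h_\B^{1/2}|\log h_\B|$ is bounded on $(0,1)$, and, in the regime $h_\Omega\sim h_\B\sim h$ underlying the single-parameter notation, the factor $h_\B/h_\Omega$ appearing for $\ell=1$ is bounded as well. Hence the error contribution is itself controlled by $C\|\mmu_h\|_\LL\|\v_h\|_{1,\Omega}$, and adding the two bounds concludes the argument.

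The main obstacle is conceptual rather than computational: unlike the exact form, $\c_h$ evaluates $\v_h(\Xbar)$ at isolated quadrature nodes, so a direct estimate would require pointwise, i.e. $\mathbf{L}^\infty$, control of a finite element function, which in two dimensions costs a logarithmic inverse factor and is not uniform in $h$. The decomposition above sidesteps this difficulty by delegating all the delicate pointwise analysis to the quadrature error estimates \eqref{eq:quaderr_l2}--\eqref{eq:h1_est_fin}, where the dangerous $\mathbf{L}^\infty$ terms are already shown to be offset by the powers of $h_\B$; the remaining exact part only needs the robust change-of-variables estimate for $\v_h(\Xbar)$.
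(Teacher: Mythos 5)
Your case $\ell=0$ is exactly the paper's proof: the same splitting of $\c_{0,h}$ into the exact form plus the quadrature error, the bound \eqref{eq:quaderr_l2}, and the observation that $h_\B^{1/2}|\log h_\B|$ is bounded for $h_\B<1$; the change-of-variables estimate $\|\v_h(\Xbar)\|_{1,\B}\le C\|\v_h\|_{1,\Omega}$ that you spell out is what the paper uses implicitly when it invokes the continuity of $\c_0$ together with $\Xbar(\B)\subset\Omega$. That half of your argument is fine.

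The case $\ell=1$ contains a genuine gap. The quadrature error estimate \eqref{eq:h1_est_fin} carries the factor $h_\B/h_\Omega$, and nothing in the hypotheses of the proposition controls this ratio: Assumption~\ref{assump:mesh} only requires each mesh family to be quasi-uniform with $h_\Omega,h_\B<1$, and imposes no relation between the two mesh sizes (take, say, $h_\Omega=h_\B^2$, and the ratio blows up as the meshes are refined). Your sentence invoking ``the regime $h_\Omega\sim h_\B\sim h$ underlying the single-parameter notation'' smuggles in an extra hypothesis that is not in the statement; the paper itself consistently treats $h_\B/h_\Omega$ as a quantity that need not be bounded --- Theorem~\ref{theo:c1h} needs $h_\B/h_\Omega\to0$ as an \emph{additional} condition for optimality, and Theorem~\ref{theo:conditioning}, which uses the present proposition through Proposition~\ref{prop:Bh_cont_cor}, is stated for arbitrary pairs $(h_\Omega,h_\B)$. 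This is precisely why the paper abandons the exact-plus-error decomposition for $\c_{1,h}$ and instead estimates the quadrature sums directly, element by element (see \eqref{eq:procedure}--\eqref{eq:c1h_t}): discrete Cauchy--Schwarz on each sum; exactness of $\{(\quadnode_k^0,\quadweigth_k^0)\}_{k=1}^{K_0}$ for quadratics and of $\{(\quadnode_k^1,\quadweigth_k^1)\}_{k=1}^{K_1}$ for constants, so that the $\mmu_h$ factors reproduce $\|\mmu_h\|_{0,\tri}$ and $\|\grads\mmu_h\|_{0,\tri}$ exactly; an $\mathbf{L}^\infty$ bound for the $\v_h(\Xbar)$ factor; and the inverse inequality $\|\v_h(\Xbar)\|_{\infty,\tri}\le C h_\tri^{-1}\|\v_h(\Xbar)\|_{0,\tri}$, whose $h_\tri^{-1}$ is absorbed by $|\tri|^{1/2}$ through shape regularity. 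Summing over $\tri\in\T_h^\B$ then gives $\c_{1,h}(\mmu_h,\v_h(\Xbar))\le C\|\mmu_h\|_{1,\B}\|\v_h\|_{1,\Omega}$ with $C$ independent of both mesh sizes \emph{and of their ratio}. To repair your proof you must either reproduce such a direct estimate for $\ell=1$, or add the hypothesis $h_\B\le C\,h_\Omega$ to the statement --- which would weaken the proposition and propagate an unwanted restriction into the conditioning bounds.
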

	
	\begin{proof}
		\textsl{Case 1.} We set $\c_h=\c_{0,h}$. The continuity of
		$\c_{0,h}(\mmu_h,\v_h(\Xbar))$ is a direct consequence of the
		continuity of $\c_0$ combined with~\eqref{eq:quaderr_l2} and the
		inclusion $\Xbar(\B)\subset\Omega$; indeed it holds
		\[
		\begin{aligned}
			\c_{0,h}(\mmu_h,\v_h(\Xbar)) &= \c_0(\mmu_h,\v_h(\Xbar)) +
			\big[\c_{0,h}(\mmu_h,\v_h(\Xbar)) -
			\c_0(\mmu_h,\v_h(\Xbar))\big]\\
			& \le C \big( \|\mmu_h\|_\LL\,\|\v_h\|_{1,\Omega} +
			h_\B^{1/2}|\log h_\B| \norm{\mmu_h}_{\LL}\norm{\v_h}_{1,\Omega}
			\big)\\
			& \le C \big(1+h_\B^{1/2}|\log h_\B|\big)\norm{\mmu_h}_{\LL}\norm{\v_h}_{1,\Omega}.
		\end{aligned}
		\]
		
		\textsl{Case 2.} We consider $\c_h=\c_{1,h}$. Since now $\LL=\Hub$ we
		can bound directly the integral associated to the $\LdBd$ scalar
		product by applying the discrete Cauchy--Schwarz inequality and the
		accuracy of the quadrature rule, hence
		\begin{equation}\label{eq:procedure}
			\begin{aligned}
				\sum_{k=1}^{K_0} \quadweigth_k^0\,\mmu_h(\quadnode_k^0)\cdot&
				\v_h(\Xbar(\quadnode_k^0))
				\le \left(\sum_{k=1}^{K_0}
				\quadweigth_k^0\,\mmu_h^2(\quadnode_k^0)\right)^{1/2}\left(\sum_{k=1}^{K_0}
				\quadweigth_k^0\,\v_h^2(\Xbar(\quadnode_k^0))\right)^{1/2}\\
				&=\|\mmu_h\|_{0,\tri}\left(\sum_{k=1}^{K_0}
				\quadweigth_k^0\,\v_h^2(\Xbar(\quadnode_k^0))\right)^{1/2}
				\le C|\tri|^{1/2}\|\mmu_h\|_{0,\tri}\|\v_h(\Xbar)\|_{\infty,\tri}.
			\end{aligned}
		\end{equation}
		We combine~\eqref{eq:procedure} with the inverse inequality
		$\|\v_h(\Xbar)\|_{\infty,\tri}\le C_I h_\tri^{-1}\|\v_h(\Xbar)\|_{0,\tri}$, so
		that we find
		\begin{equation}\label{eq:procedure_2}
			\sum_{k=1}^{K_0} \quadweigth_k^0\,\mmu_h(\quadnode_k^0)\cdot
			\v_h(\Xbar(\quadnode_k^0))
			\le C|\tri|^{1/2}h_\tri^{-1}\|\mmu_h\|_{0,\tri}\|\v_h(\Xbar)\|_{0,\tri}
			\le C\|\mmu_h\|_{0,\tri}\|\v_h(\Xbar)\|_{0,\tri}.
		\end{equation}	
		
		We now focus on the term involving gradients. By applying the same
		procedure as in~\eqref{eq:procedure} and~\eqref{eq:procedure_2}, we obtain
		\begin{equation}\label{eq:c1h_t}
			\sum_{k=1}^{K_1} \quadweigth_k^1\,\grads\mmu_h(\quadnode_k^1):\grads\v_h(\Xbar(\quadnode_k^1))
			\le C\,\|\grads\mmu_h\|_{0,\tri}\|\grads\v_h(\Xbar)\|_{0,\tri}.
		\end{equation}
		We sum up~\eqref{eq:procedure_2} and~\eqref{eq:c1h_t} over all $\tri\in\T_h^\B$. The inclusion $\Xbar(\B)\subset\Omega$, together with the Cauchy--Schwarz inequality, gives
		\[
		\c_{1,h}(\mmu_h,\v_h(\Xbar)) \le
		C\,\left(\|\mmu_h\|_{0,\B}\,\|\v_h(\Xbar)\|_{0,\B}
		+\|\grads\mmu_h\|_{0,\B}\|\grads\v_h(\Xbar)\|_{0,\B}\right)
		\le C\,\|\mmu_h\|_{\LL}\,\|\v_h\|_{1,\Omega},
		\]
		so that the result is proved.
	\end{proof}
	
	The continuity of $\Bil_h$ is a direct consequence of the above proposition, and is stated as follows.
	
	\begin{prop}\label{prop:Bh_cont_cor}
		Under Assumptions~\ref{assump:mesh} and~\ref{assump:quadrature}, there exists a constant $M>0$, independent of $h$, such that
		\begin{equation}\label{eq:Bh_cont_cor_ch1}
			\Bil_h(\U_h,\V_h;\Xbar)\le M \|\U_h\|_\VV\|\V_h\|_\VV \qquad \forall\U_h,\V_h\in\VV_h.\\
		\end{equation}	
		
	\end{prop}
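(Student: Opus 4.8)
The plan is to bound each of the contributions appearing in the definition~\eqref{eq:def_Bil_h} of $\Bil_h$ individually, and then to recombine the resulting products of component norms by a discrete Cauchy--Schwarz inequality. The crucial observation is that the only genuinely non-standard ingredient, namely the continuity of the inexactly assembled coupling terms $\c_h(\mmu_h,\u_h(\Xbar))$ and $\c_h(\llambda_h,\v_h(\Xbar))$, has already been established in Proposition~\ref{prop:continuity_ch} with a constant independent of $h$. Everything else is classical, so the argument is essentially bookkeeping.

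First I would treat the volumetric forms. The bilinear forms $\a_f$ and $\a_s$ defined in~\eqref{eq:forme} are bounded, with constants depending on $\alpha,\nu$ and $\beta,\gamma$ but not on $h$, so that $\a_f(\u_h,\v_h)\le C\,\|\u_h\|_{1,\Omega}\|\v_h\|_{1,\Omega}$ and $\a_s(\X_h,\Y_h)\le C\,\|\X_h\|_{1,\B}\|\Y_h\|_{1,\B}$ follow from Cauchy--Schwarz. The two \emph{exact} coupling terms $\c(\mmu_h,\X_h)$ and $\c(\llambda_h,\Y_h)$ have both arguments living in $\Hub$, hence they are controlled by the continuity of $\c$ itself, giving $\c(\mmu_h,\X_h)\le C\,\|\mmu_h\|_\LL\|\X_h\|_{1,\B}$ and similarly for $\c(\llambda_h,\Y_h)$. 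The two \emph{inexact} terms are bounded directly by Proposition~\ref{prop:continuity_ch}, which yields $\c_h(\mmu_h,\u_h(\Xbar))\le C\,\|\mmu_h\|_\LL\|\u_h\|_{1,\Omega}$ and $\c_h(\llambda_h,\v_h(\Xbar))\le C\,\|\llambda_h\|_\LL\|\v_h\|_{1,\Omega}$. Finally, for the divergence terms I would use Cauchy--Schwarz together with $\|\div\v_h\|_{0,\Omega}\le\sqrt{2}\,\|\v_h\|_{1,\Omega}$, obtaining $(\div\v_h,p_h)_\Omega\le C\,\|\v_h\|_{1,\Omega}\|p_h\|_{0,\Omega}$ and $(\div\u_h,q_h)_\Omega\le C\,\|\u_h\|_{1,\Omega}\|q_h\|_{0,\Omega}$.

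Having isolated all contributions, I would notice that each of the above bounds is a product of exactly two of the four component norms $\|\cdot\|_{1,\Omega}$, $\|\cdot\|_{1,\B}$, $\|\cdot\|_\LL$, $\|\cdot\|_{0,\Omega}$ that make up $\|\cdot\|_\VV$. Summing the eight estimates and applying the discrete Cauchy--Schwarz inequality to the resulting sum of cross products then gives
\begin{equation*}
\Bil_h(\U_h,\V_h;\Xbar)\le M\,\|\U_h\|_\VV\|\V_h\|_\VV,
\end{equation*}
with $M$ depending only on the physical constants and on the $h$-independent constant of Proposition~\ref{prop:continuity_ch}, hence independent of $h$ as claimed.

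There is in fact no real obstacle remaining in this proposition: the delicate point—controlling the inexact coupling, in particular the gradient part of $\c_{1,h}$ where an inverse inequality must be invoked to absorb the factor $|\tri|^{1/2}h_\tri^{-1}$—was precisely the content of Proposition~\ref{prop:continuity_ch}. The present statement only requires assembling those pieces, so the task reduces to verifying that each term produces the correct pair of $\|\cdot\|_\VV$-components and that the constants are uniform in $h$ under Assumptions~\ref{assump:mesh} and~\ref{assump:quadrature}.
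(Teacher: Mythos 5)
Your proposal is correct and follows exactly the paper's own argument: the paper likewise proves this proposition by combining Proposition~\ref{prop:continuity_ch} with the continuity of the remaining terms in the definition~\eqref{eq:def_Bil_h}, which is precisely your term-by-term bookkeeping. The only difference is that you spell out the individual bounds, while the paper states the combination in one line.
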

	
	\begin{proof}		
		The result is obtained by combining Proposition~\ref{prop:continuity_ch} with the continuity of the other terms in definition~\eqref{eq:def_Bil_h}.
	\end{proof}
	
	We now estimate $\omega$ and $\tau$ (see~\eqref{eq:omega_tau}) for $\Bil_h$.
	
	\begin{prop}\label{prop:tau_omega_inex}
		Under Assumptions~\ref{assump:mesh} and~\ref{assump:quadrature}, there exist two positive constants, denoted again by $\underline{\gamma},\,\overline{\gamma}$, such that the following inequalities hold true
		\begin{equation*}
			\omega=\sup_{\U_h\in\VV_h}\sup_{\V_h\in\VV_h} \frac{\Bil_h(\U_h,\V_h;\Xbar)}{\|\U_h\|_{0}\|\V_h\|_{0}} \le \overline{\gamma}\,( h_\Omega^{-2} + h_\B^{-2}),\qquad\quad
			\tau=\inf_{\U_h\in\VV_h}\sup_{\V_h\in\VV_h} \frac{\Bil_h(\U_h,\V_h;\Xbar)}{\|\U_h\|_{0}\|\V_h\|_{0}} \ge \underline{\gamma}\,h_\B^{2-2\ell},
		\end{equation*}
		where $\ell=0,1$ is associated to the choice of $\c_h$.
	\end{prop}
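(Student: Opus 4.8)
The plan is to mirror exactly the structure of Proposition~\ref{prop:tau_omega_ex}, but replace each appeal to the exact-integration results with its inexact counterpart. The key observation is that Lemma~\ref{lem:equiv_norme} is insensitive to the assembly technique for the coupling term — it is a pure norm-equivalence statement about finite element functions — so it applies verbatim to the inexact setting. Thus the only two ingredients I need to adapt are the continuity of the bilinear form (which controls $\omega$) and the discrete inf-sup stability (which controls $\tau$).

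First I would bound $\omega$. By the continuity of $\Bil_h$ established in Proposition~\ref{prop:Bh_cont_cor}, namely
\begin{equation*}
	\Bil_h(\U_h,\V_h;\Xbar)\le M \|\U_h\|_\VV\|\V_h\|_\VV,
\end{equation*}
I can factor the supremum quotient exactly as in the exact case:
\begin{equation*}
	\frac{\Bil_h(\U_h,\V_h;\Xbar)}{\|\U_h\|_{0}\|\V_h\|_{0}} = \frac{\Bil_h(\U_h,\V_h;\Xbar)}{\|\U_h\|_{\VV}\|\V_h\|_{\VV}} \cdot \frac{\|\U_h\|_{\VV}}{\|\U_h\|_{0}}\cdot\frac{\|\V_h\|_{\VV}}{\|\V_h\|_{0}}.
\end{equation*}
The first factor is bounded by $M$, and the upper bound in Lemma~\ref{lem:equiv_norme} controls each of the remaining two factors by $\sqrt{\overline{C}(h_\Omega^{-2}+h_\B^{-2})}$, giving $\omega\le\overline{\gamma}(h_\Omega^{-2}+h_\B^{-2})$.

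For the lower bound on $\tau$, I would use the inexact discrete inf-sup condition of Proposition~\ref{prop:infsup_discrete_ch}, which guarantees a constant $\hat\eta>0$, independent of $h$, with $\inf\sup\Bil_h/(\|\U_h\|_\VV\|\V_h\|_\VV)\ge\hat\eta$. Factoring the infimum–supremum quotient the same way and invoking the lower bound in Lemma~\ref{lem:equiv_norme}, which supplies $\|\V_h\|_\VV^2\ge\underline{C}h_\B^{2-2\ell}\|\V_h\|_0^2$ for $\LL=\LL_\ell$, yields $\tau\ge\underline{\gamma}\,h_\B^{2-2\ell}$. Because both Propositions~\ref{prop:Bh_cont_cor} and~\ref{prop:infsup_discrete_ch} hold under Assumptions~\ref{assump:mesh} and~\ref{assump:quadrature} with constants independent of how the fluid and mapped solid meshes intersect, the resulting bounds for $\omega$ and $\tau$ inherit this same independence — which is precisely the point of the paper.

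I do not expect a genuine obstacle here, since the real work has already been done upstream: the delicate quadrature-error estimate~\eqref{eq:h1_est_fin} and the inverse-inequality argument inside Proposition~\ref{prop:continuity_ch} are what make $\Bil_h$ continuous despite the loss incurred by inexact integration, and the stability $\hat\eta$ is quoted from~\cite{BCG24}. The only point requiring minor care is to verify that the $h_\B/h_\Omega$ term appearing in the $\c_{1,h}$ error estimate does not degrade the continuity constant $M$; but Proposition~\ref{prop:Bh_cont_cor} already absorbs this, so no new estimate is needed and the proof reduces to the two factorization arguments above, identical in form to those in the proof of Proposition~\ref{prop:tau_omega_ex}.
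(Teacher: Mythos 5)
Your proposal is correct and is exactly the argument the paper intends: the paper's proof of Proposition~\ref{prop:tau_omega_inex} simply states that it is ``similar to the one of Proposition~\ref{prop:tau_omega_ex}'', meaning precisely your substitution of Proposition~\ref{prop:Bh_cont_cor} for the continuity of $\Bil$ and of Proposition~\ref{prop:infsup_discrete_ch} for Proposition~\ref{prop:infsup_discrete}, with Lemma~\ref{lem:equiv_norme} reused unchanged. No gap; your write-up just makes the implicit adaptation explicit.
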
	
	
	\begin{proof}
		The proof is similar to the one of Proposition~\ref{prop:tau_omega_ex}.
	\end{proof}
	
	\subsection{Main result}
	
	We conclude this section by presenting the estimate of the condition number in both cases of exact and inexact integration of the coupling term.
	
	\begin{teo}\label{theo:conditioning}
		Under Assumption~\ref{assump:mesh}, for $\ell=0,1$ depending on the definition of $\c$, we have
		\begin{equation*}
			\cond{\Sys} \le C h_\B^{-2+2\ell}( h_\Omega^{-2}+h_\B^{-2}).
		\end{equation*}
		Moreover, with the additional hypotheses stated in Assumption~\ref{assump:quadrature}, we have
		\begin{equation*}
			\cond{\Sys_h} \le C h_\B^{-2+2\ell}( h_\Omega^{-2}+h_\B^{-2}).
		\end{equation*}
	\end{teo}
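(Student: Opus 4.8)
The plan is to assemble the final statement directly from the machinery already developed in this section, treating the exact and inexact cases in parallel by means of the unified notation $\Sys_\square$ and $\Bil_\square$. The key observation is that Theorem~\ref{teo:ern_guermond} reduces the estimate of $\cond{\Sys_\square}$ to the ratio $\omega/\tau$, and that both $\omega$ and $\tau$ have already been bounded: in the exact case by Proposition~\ref{prop:tau_omega_ex}, and in the inexact case by Proposition~\ref{prop:tau_omega_inex}. Crucially, both propositions deliver \emph{identical} bounds, namely $\omega\le\overline{\gamma}\,(h_\Omega^{-2}+h_\B^{-2})$ and $\tau\ge\underline{\gamma}\,h_\B^{2-2\ell}$, so the two cases can be handled by one and the same computation.

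First I would invoke Theorem~\ref{teo:ern_guermond} to write $\cond{\Sys_\square}\le C^\star\,\omega/\tau$. Then I would substitute the upper bound for $\omega$ and the lower bound for $\tau$ from the appropriate proposition, obtaining
\begin{equation*}
\cond{\Sys_\square}\le C^\star\,\frac{\overline{\gamma}\,(h_\Omega^{-2}+h_\B^{-2})}{\underline{\gamma}\,h_\B^{2-2\ell}} = C\,h_\B^{-2+2\ell}\,(h_\Omega^{-2}+h_\B^{-2}),
\end{equation*}
where $C=C^\star\,\overline{\gamma}/\underline{\gamma}$ absorbs all the mesh-independent constants. For the first displayed bound on $\cond{\Sys}$ I would use Proposition~\ref{prop:tau_omega_ex}, which requires only Assumption~\ref{assump:mesh}; for the second bound on $\cond{\Sys_h}$ I would use Proposition~\ref{prop:tau_omega_inex}, which additionally requires Assumption~\ref{assump:quadrature}. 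This precisely matches the hypotheses stated in the theorem.

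There is really no substantive obstacle remaining at this point: all the hard analytic work—the $h$-dependent norm equivalence of Lemma~\ref{lem:equiv_norme}, the continuity of the (possibly inexact) coupling form in Proposition~\ref{prop:continuity_ch}, and the inf-sup stability feeding into the bound for $\tau$—has already been carried out in the preceding propositions. The only point deserving a word of care is to confirm that the exponent bookkeeping is correct, i.e.\ that dividing by $h_\B^{2-2\ell}$ indeed produces the factor $h_\B^{-2+2\ell}$ appearing in the statement, and that the parameter $\ell\in\{0,1\}$ is consistently tied to the choice $\c=\c_\ell$ throughout. Thus the proof is essentially a one-line combination of Theorem~\ref{teo:ern_guermond} with Proposition~\ref{prop:tau_omega_ex} or Proposition~\ref{prop:tau_omega_inex}, respectively.
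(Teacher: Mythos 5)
Your proposal is correct and follows exactly the paper's own argument: the paper likewise proves Theorem~\ref{theo:conditioning} as a direct consequence of Theorem~\ref{teo:ern_guermond} combined with Proposition~\ref{prop:tau_omega_ex} (exact coupling, under Assumption~\ref{assump:mesh} only) and Proposition~\ref{prop:tau_omega_inex} (inexact coupling, additionally under Assumption~\ref{assump:quadrature}). The exponent bookkeeping $\omega/\tau \le C\,h_\B^{-2+2\ell}(h_\Omega^{-2}+h_\B^{-2})$ is exactly as you describe, so nothing is missing.
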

	
	\begin{proof}
		The result is a direct consequence of Theorem~\ref{teo:ern_guermond} combined with Proposition~\ref{prop:tau_omega_ex} and  Proposition~\ref{prop:tau_omega_inex} for the exact and inexact integration, respectively.
	\end{proof}
	
	\red
	
	\begin{rem}\label{rem:cond_h}
		We observe that, if $h = h_\B\approx h_\Omega$, then Theorem~\ref{theo:conditioning} gives the following bounds
		\begin{equation}
			\begin{aligned}
				&\cond{\Sys_\square} \le C h^{-4} &&\quad \text{if } \c=\c_0 \text{ or } \c_h=\c_{0,h},\\
				&\cond{\Sys_\square} \le C h^{-2} &&\quad\text{if }\c=\c_1 \text{ or } \c_h=\c_{1,h}.
			\end{aligned}
		\end{equation} 
	\end{rem}
	
	\nored
	
	\begin{rem}
		The condition number only depends on the mesh sizes $h_\Omega$ and $h_\B$, while its growth rate depends on the choice of the coupling term, regardless of the implementation technique. We observed in Section~\ref{sec:coupling_term} that the well-posedness of the discrete problem is independent of the size of the intersected cell. This fact is still true for the condition number, thus its behavior does not depend on the position of the interface as we show later in our numerical experiments.
	\end{rem}
	
	\begin{rem}
		
		We observe that the choice of coupling form $\c$ and related assembly technique should be done with particular care. On the one hand, looking at Theorems~\ref{theo:c0h} and~\ref{theo:c1h}, we see that choosing $\c_0$ instead of $\c_1$ may be more convenient from the computational point of view: indeed, $\c_0$ can be cheaply assembled using the inexact procedure, without negative effects on the accuracy. On the other hand, Theorem~\ref{theo:conditioning} says that the condition number increases faster in $h$ when $\c_0$ or $\c_{0,h}$ are considered.
		
	\end{rem}
	
	\section{Numerical tests}\label{sec:tests}
	
	This section collects a wide range of numerical tests confirming our theoretical results for the condition number and the convergence of the method. We first describe the problem we are going to solve throughout the entire section.
	
	\begin{pro}
		\label{pro:tests}
		Given $\Xbar\in\Winftyd$ invertible with Lipschitz inverse and given
		$\f\in\LdOd$, ${\g\in \LdBd}$, and $\d\in\Hub$, find $(\u,p)\in\Huo\times\Ldo$,
		$\X\in\Hub$, and $\llambda\in\LL$ such that
		\begin{subequations}
			\begin{align}
				&\a_f(\u,\v)-(\div\v,p)_\Omega+\c(\llambda,\v(\Xbar))=(\f,\v)_\Omega&&
				\forall\v\in\Huo\\
				&(\div\u,q)_\Omega=0&&\forall q\in \Ldo\\
				&\a_s(\X,\Y)-\c(\llambda,\Y)=(\g,\Y)_\B&&\forall\Y\in\Hub\\
				&\c(\mmu,\X-\u(\Xbar))=\c(\mmu,\d)&&\forall\mmu\in\LL
			\end{align}
		\end{subequations}
		where $\a_f$ and $\a_s$ are defined in~\eqref{eq:forme}, with $\alpha = \frac{\rho_f}{\Delta t}$, $\beta = \frac{\dr}{\Delta t}$, $\gamma = \kappa\Delta t$.
	\end{pro}
	
	In each test, we compute the right hand sides $\f$, $\g$ and $\d$ starting from manufactured solutions. The viscosity $\nu$ is always equal to one, while the values of $\alpha$, $\beta$ and $\gamma$ differ from test to test.
	
	Since the definition of $\LL$ changes with the choice of coupling term $\c$, we point out that if $\c=\c_1$ then the error computation for $\llambda$ is done in the standard $\Hub$ norm. On the other hand, when $\c=\c_0$ the error is computed by using the norm of the dual space $\Hubd$: such norm is evaluated by solving the auxiliary problem $-\Delta\bm{\psi}+\bm{\psi}=\llambda-\llambda_h$ with homogeneous Neumann boundary conditions and by computing the $H^1$-norm of $\bm{\psi}$.
	
	We consider four different geometrical configurations: square, disk, flower, and stretched annulus. The first one is treated with more details, whereas the results for the other three are presented all together.
	
	\subsection{The shifted square}
	
	The goal of this first test is to assess the (in)dependence of the method from the interface position. We consider an immersed square placed at different positions with respect to the fluid mesh. We show that condition number and rate of convergence do not depend either on the interface position or on the presence of small intersections.
	
	\begin{figure}
		\centering
		\subfloat[Shift $\sigma$]{\includegraphics[width=0.207\linewidth]{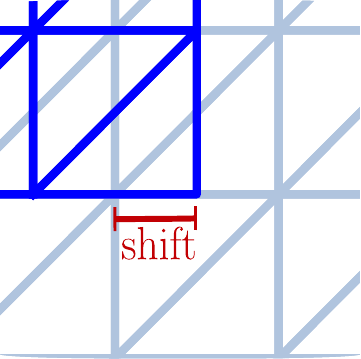}}
		\subfloat[$\sigma<0$]{\includegraphics[width=0.275\linewidth]{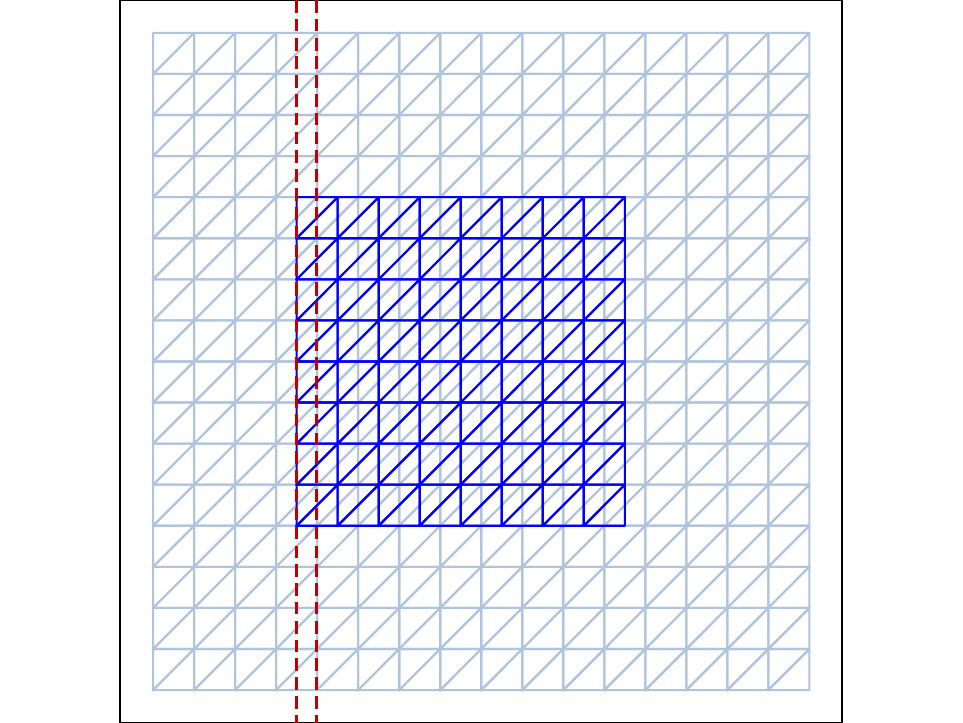}}
		\subfloat[$\sigma=0$]{\includegraphics[width=0.22\linewidth]{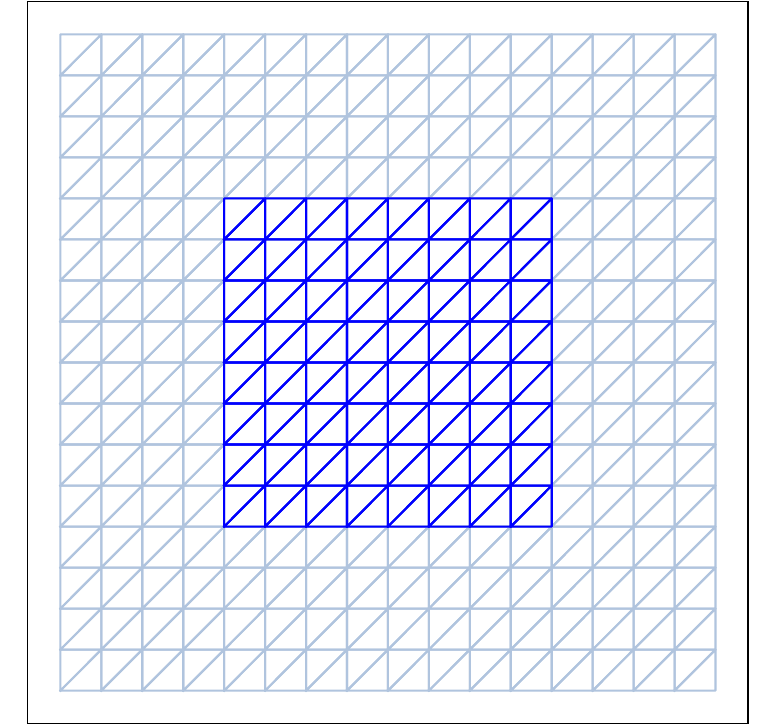}}
		\subfloat[$\sigma>0$]{\includegraphics[width=0.25\linewidth]{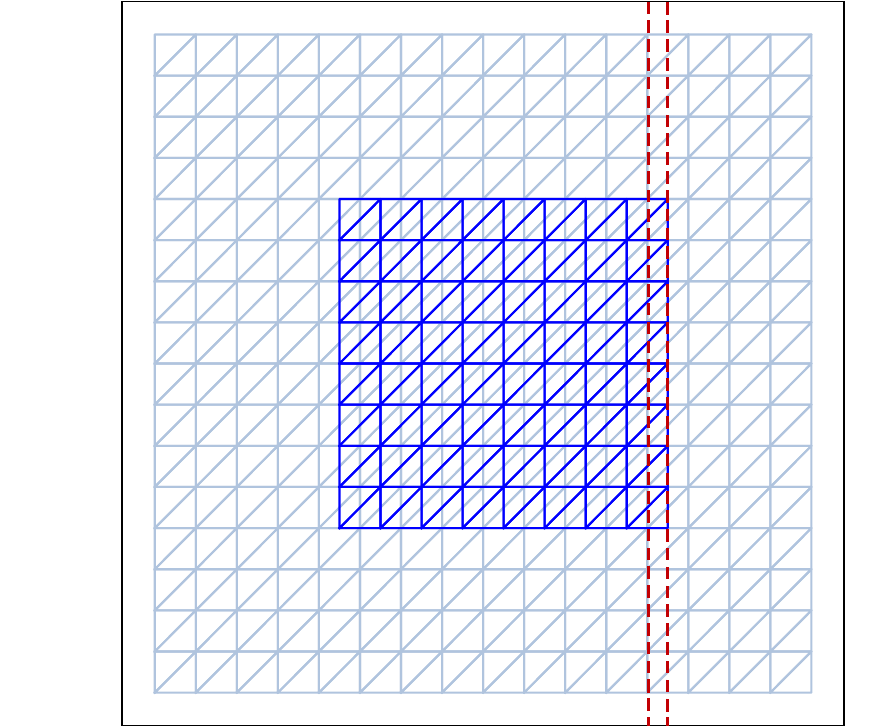}}
		\caption{The geometrical configuration of the shifted square. The value of $\sigma$ gives the shift between the fluid mesh and the mapped solid one.}
		\label{fig:shift}
	\end{figure}
	
	We consider $\Omega=[-2,2]^2$ and $\B=[0,1]^2$. $\Xbar$ is computed in such a way that the actual position of the solid is given by the square $\Os=[-1+\sigma,1+\sigma]\times[-1,1]$, where $\sigma$ is a shift parameter denoting the distance between the fluid and solid meshes. If $\sigma=0$, then $\T_{h/2}^\Omega$ and the mapped solid mesh $\Xbar(\T_h^\B)$ perfectly match. If $\sigma>0$ ($\sigma<0$), then the square $\Os$ is shifted to the right (left). A sketch of this geometrical configuration is shown in Figure~\ref{fig:shift}. Clearly, for small values of the shift, we have small intersected cells. From our theory, the presence of small cut cells should not affect the optimality of the method.
	
	We set $\alpha = \beta = 0$, so that no mass terms are present, and $\gamma=1$. We choose the following solution
	\begin{equation}
		\begin{aligned}
			&\u(x,y) = \curl\big( (4-x^2)^2(4-y^2)^2 \big)\\
			&p(x,y) = 150\,\sin(x)&&\qquad\text{for }\x=(x,y)\in\Omega\\
			&\X(s_1,s_2) = \u(s_1,s_2)\\
			&\llambda(s_1,s_2) = (e^{s_1},e^{s_2})&&\qquad\text{for }\s=(s_1,s_2)\in\B,
		\end{aligned}
	\end{equation}
	and we solve the problem for $\sigma=0$ and $\sigma=\pm10^{-j}$ with $j=3,4,\dots,15$.

	In the first test, both fluid and solid domains are discretized by fixed $256\times256$ uniform triangular meshes. The behavior of the condition number with respect to the shift $\sigma$ is reported in Figure~\ref{fig:shift_cond} for all choices of coupling term. Looking at the scale of the $y$ axis, it is clear that the condition number is not significantly affected by the value of $\sigma$. On the other hand, the condition number associated to the choice of $\c_0$ and $\c_{0,h}$ is five orders of magnitude larger than for $\c_1$ and $\c_{1,h}$. \red This is consistent with the theoretical analysis presented in Section~\ref{sec:conditioning}. \nored
	
	In Figure~\ref{fig:shift_err}, we report the value of the errors in terms of the shift. Also in this case, there is no influence of $\sigma$ for all possible choices of coupling term. We observe that, when $\c=\c_{1,h}$, the error for $\llambda$ is affected by the presence of the term $h_\B/h_\Omega$ in the quadrature error estimate as already discussed in our previous works~\cite{boffi2022interface,BCG24}.
	
	\begin{figure}
		\centering
		\includegraphics[width=0.45\linewidth]{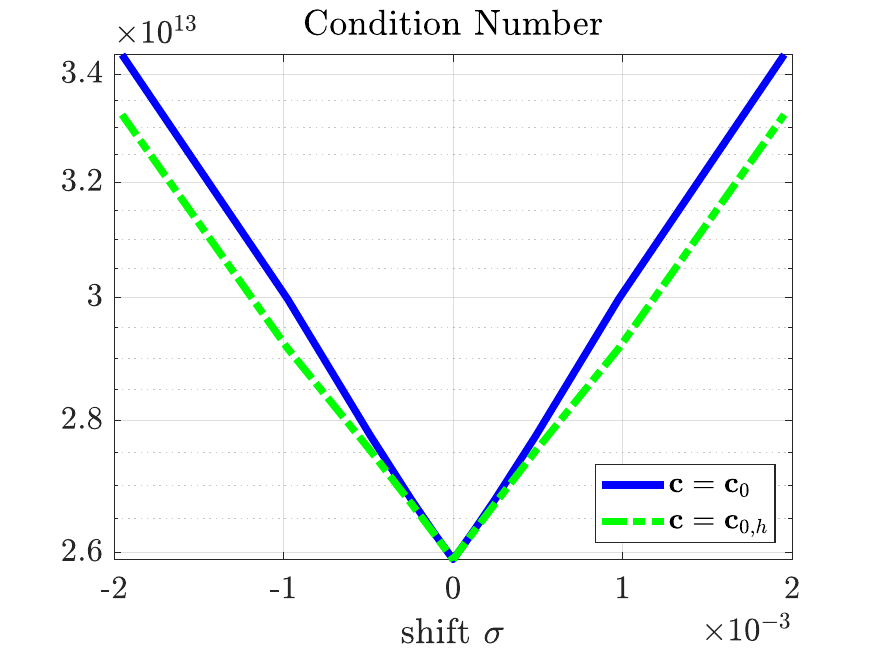}
		\includegraphics[width=0.45\linewidth]{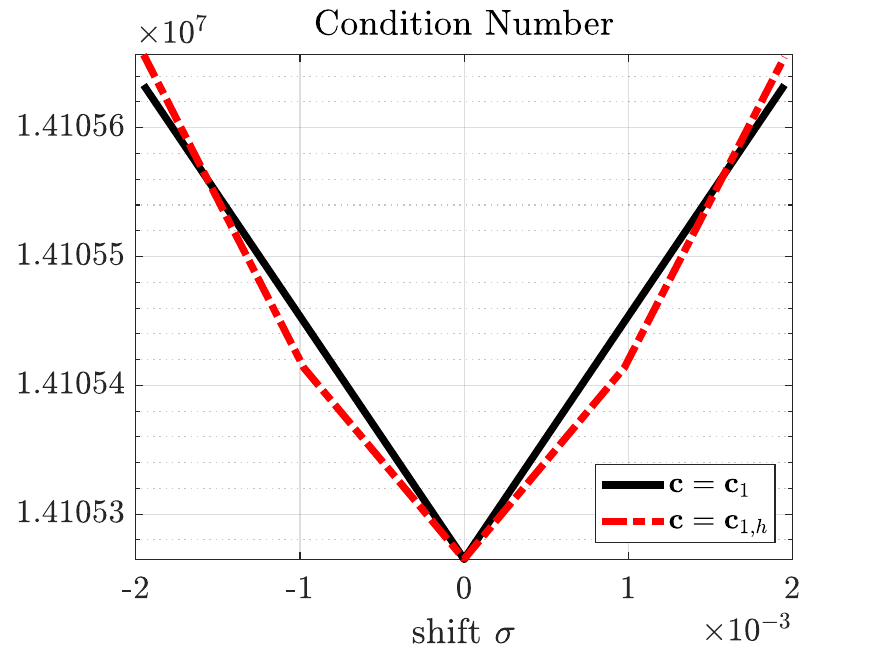}
		\caption{Condition number of the shifted square problem as a function of the shift $\sigma$. The value of $\sigma$ does not affect the condition number. For $\c=\c_0$ or $\c_{0,h}$, $\cond{\Sys_\square}$ is five order of magnitude larger than for $\c=\c_1$ or $\c_{1,h}$.}
		\label{fig:shift_cond}
	\end{figure}
	
	\begin{figure}
		\centering
		\includegraphics[width=0.24\linewidth]{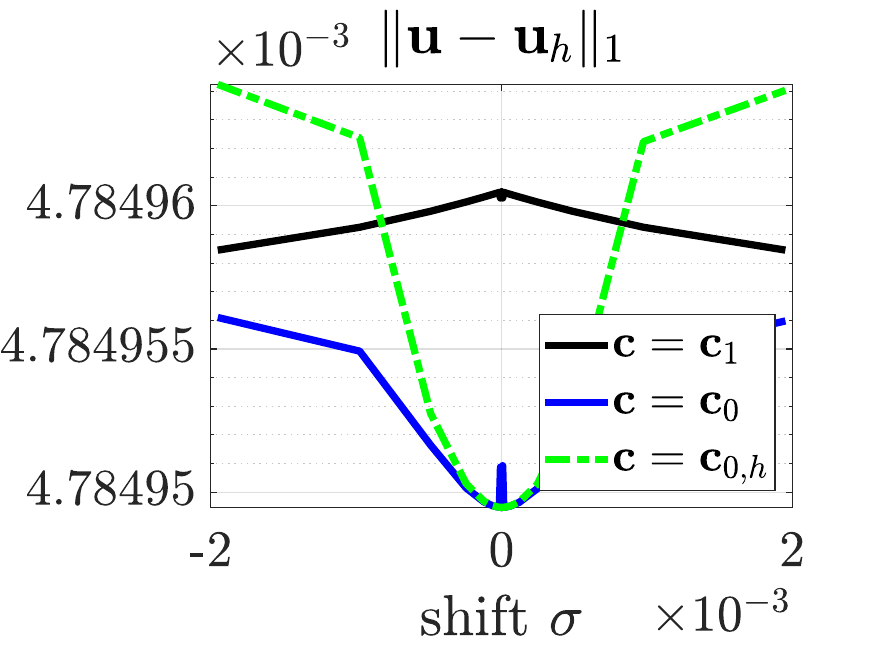}
		\includegraphics[width=0.24\linewidth]{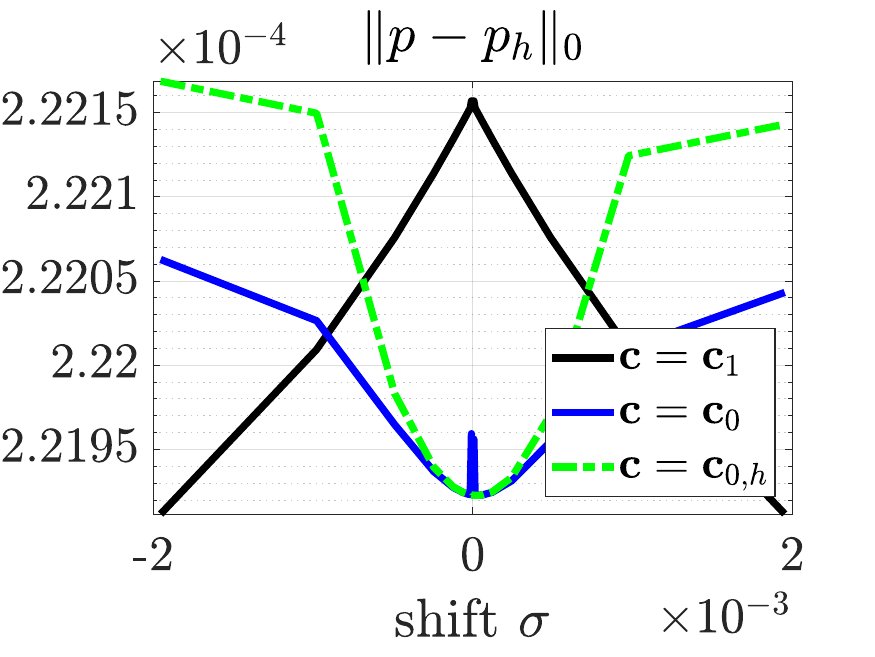}
		\includegraphics[width=0.24\linewidth]{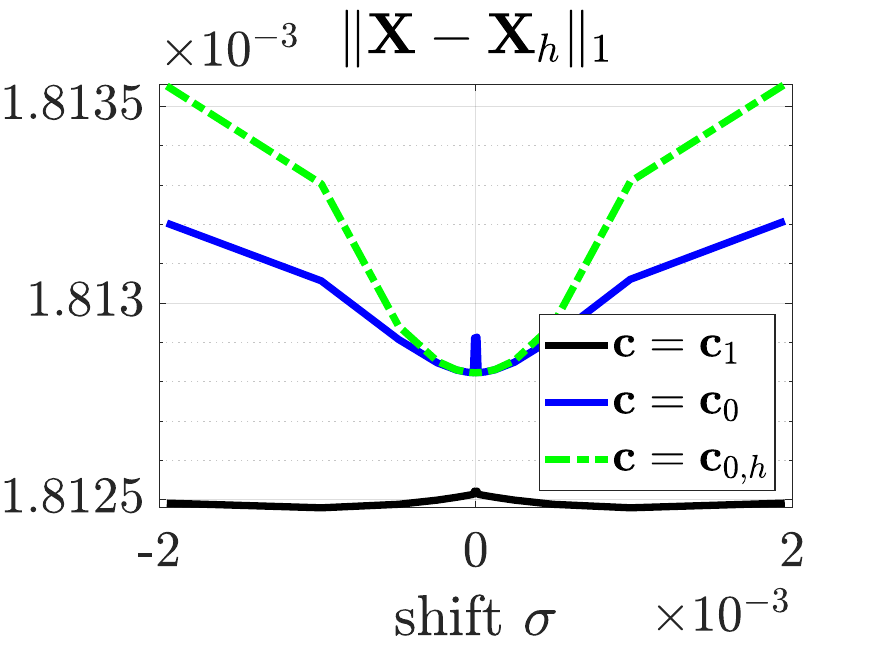}
		\includegraphics[width=0.24\linewidth]{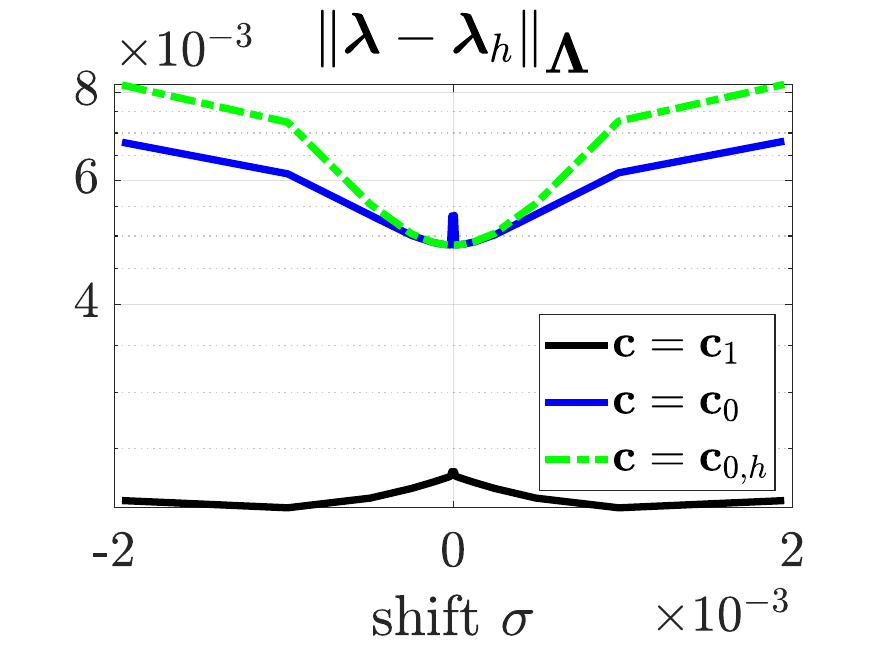}
		
		\
		
		\includegraphics[width=0.24\linewidth]{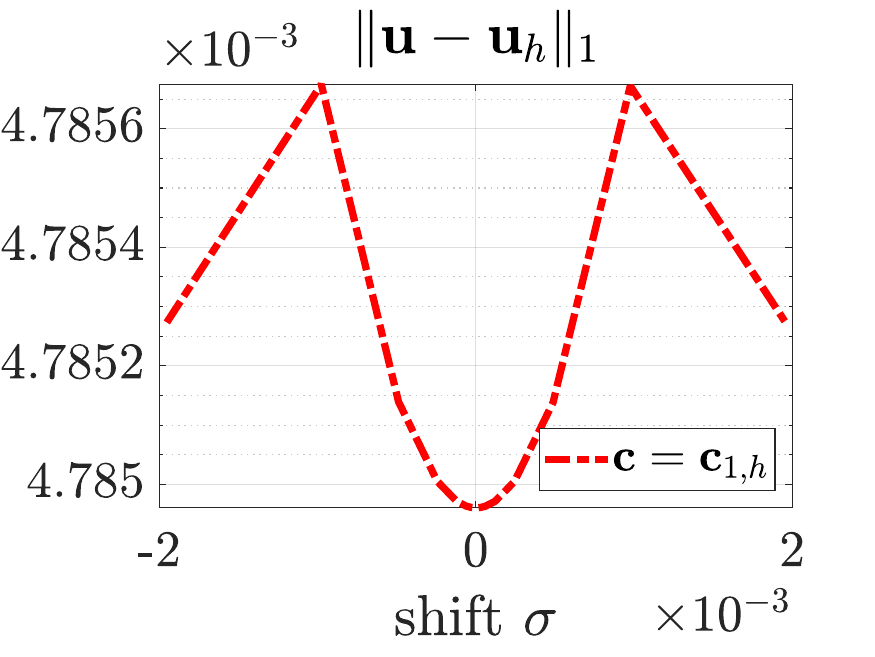}
		\includegraphics[width=0.24\linewidth]{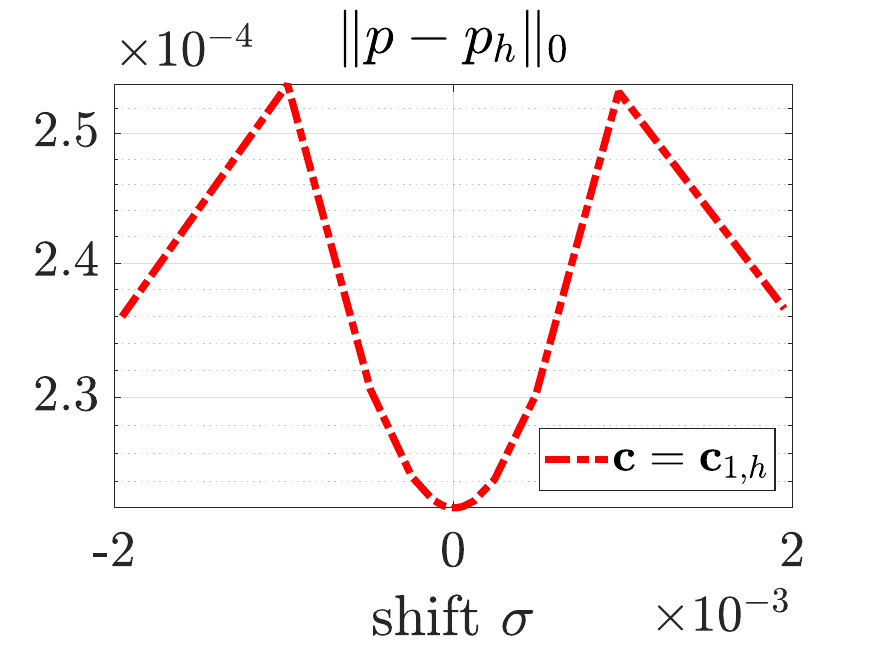}
		\includegraphics[width=0.24\linewidth]{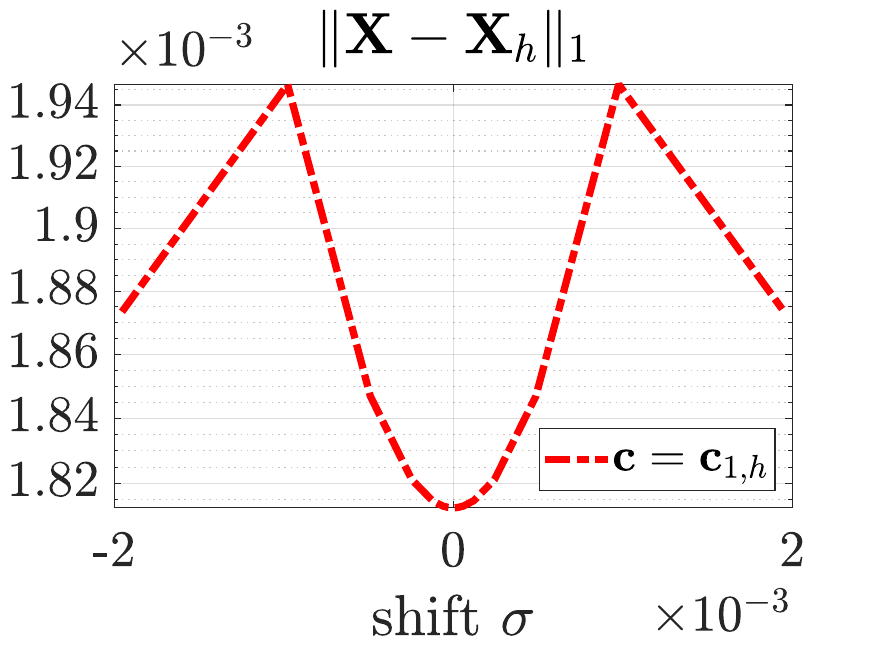}
		\includegraphics[width=0.24\linewidth]{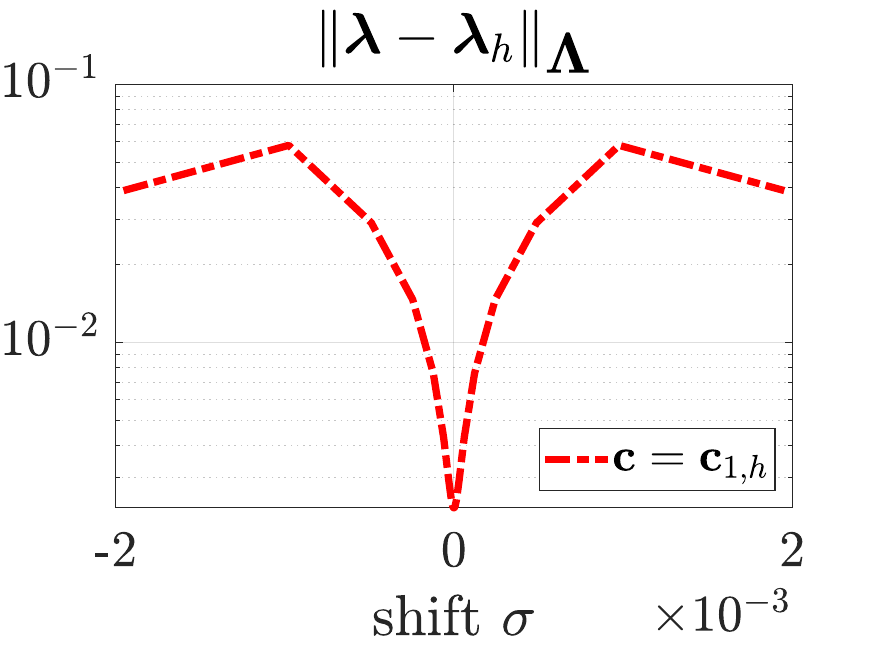}
		\caption{Error for the shifted square test plotted with respect to $\sigma$. In all cases, results are not affected by the value of the shift. The curves related to the coupling $\c_{1,h}$ are plotted separately since they have a different scale due to the quadrature error.}
		\label{fig:shift_err}
	\end{figure}
	
	At this point, we study error and condition number with respect to $h$ refinement for three fixed values of $\sigma$. In particular, we choose $\sigma=0$ (the matching situation), $\sigma=\pi\cdot10^{-13}$, and $\sigma=\pi\cdot10^{-3}$. Initially, $\T_h^\Omega$ and $\T_h^\B$ are both $8\times8$ uniform meshes, which are then refined five times in such a way that $h_\Omega/h_\B$ is kept constant.
	Since $h_\B=C\,h_\Omega\approx h$, Theorem~\ref{theo:conditioning} gives the following theoretical behavior of the condition number, \red as already observed in Remark~\ref{rem:cond_h}\nored
	\begin{equation}\label{eq:cond_tests}
		\begin{aligned}
			&\cond{\Sys_\square} \le C h^{-4} &&\quad \text{if } \c=\c_0 \text{ or } \c_h=\c_{0,h},\\
			&\cond{\Sys_\square} \le C h^{-2} &&\quad\text{if }\c=\c_1 \text{ or } \c_h=\c_{1,h}.
		\end{aligned}
	\end{equation} 
	
	The numerical growth rates of the condition number are reported in Figure~\ref{fig:shift_cond_h}. The results agree with the theoretical findings and do not change when $\sigma$ changes. Once again, the absolute value of the condition number is larger when $\c_0$ or $\c_{0,h}$ are considered.
	
	\begin{figure}
		\centering
		\includegraphics[width=0.328\linewidth]{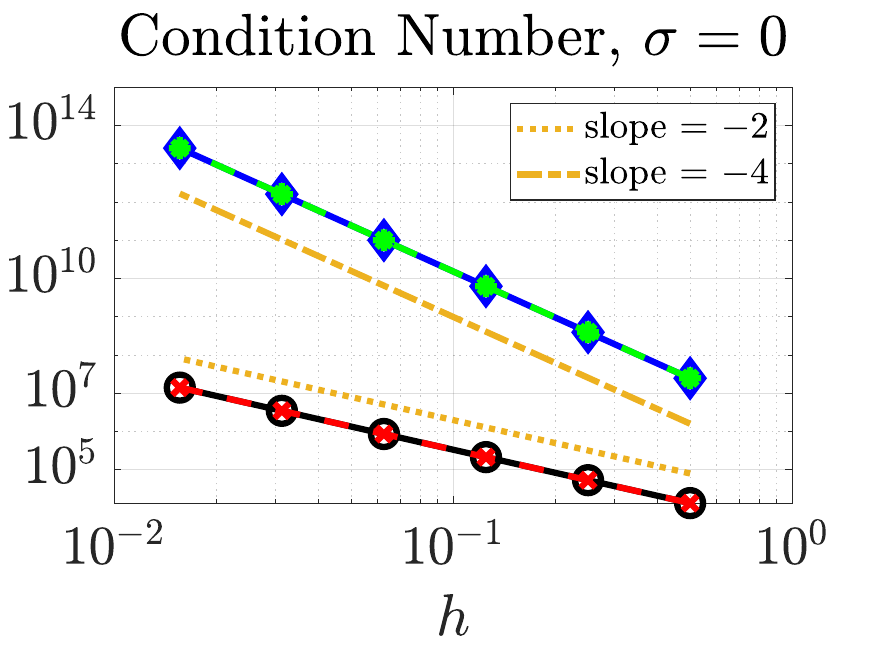}
		\includegraphics[width=0.328\linewidth]{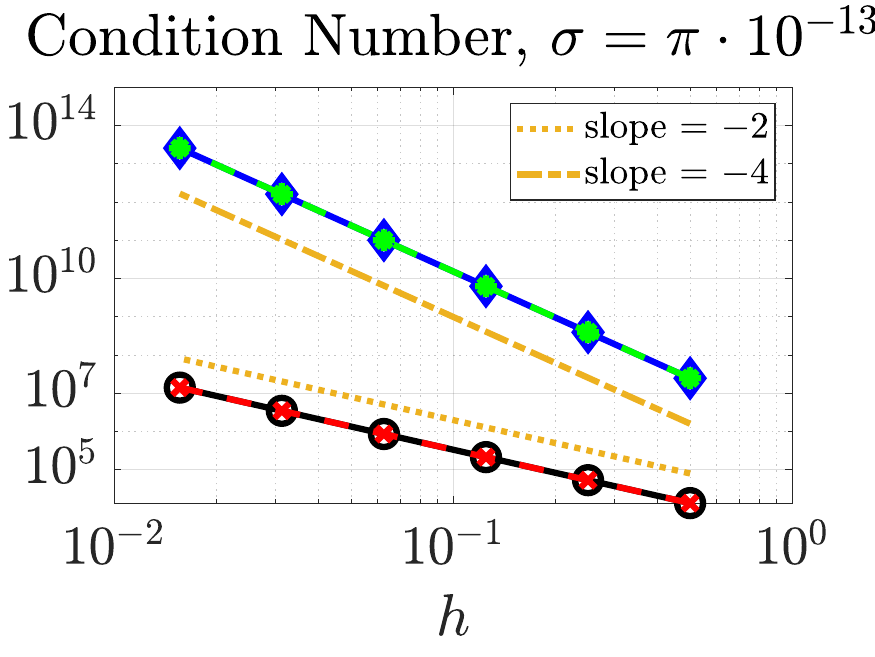}
		\includegraphics[width=0.328\linewidth]{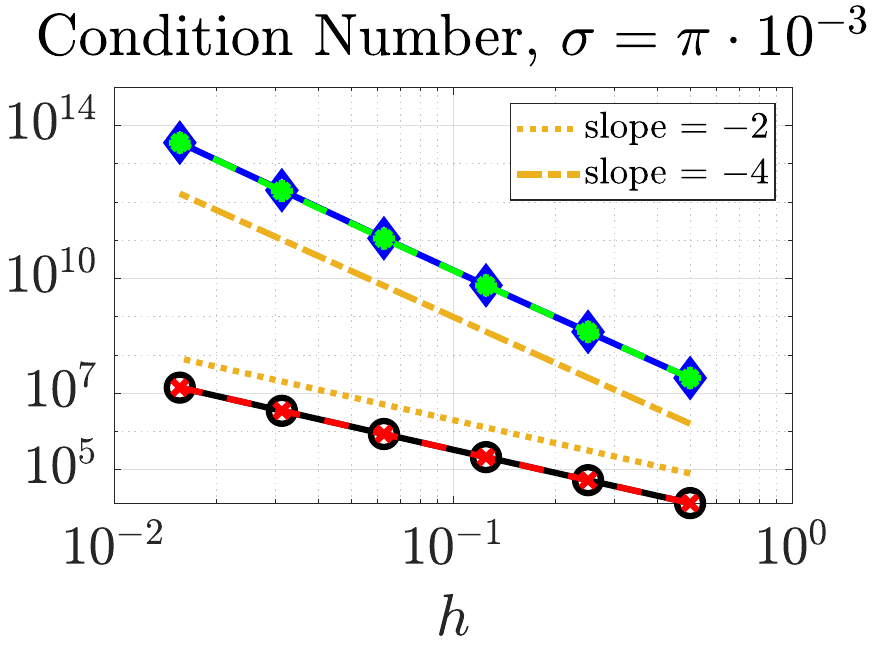}
		\includegraphics[width=0.35\linewidth]{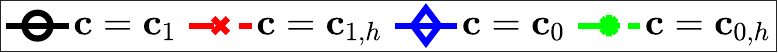}
		\caption{Condition number of the shifted square test plotted as a function of mesh size for fixed values of $\sigma$.}
		\label{fig:shift_cond_h}
	\end{figure}
	
	Regarding the error convergence, for $\sigma=0$ we obtain that the exact and inexact assembly of the coupling term produces the same results. This makes sense since the immersed solid mesh matches with the fluid one. Convergence plots are reported in Figure~\ref{fig:shift_sigma0} for both choices of $\c$ (the top line refers to $\c_0$, while the bottom line to $\c_1$); all variables converge with optimal rates. We obtain the same behavior for $\sigma=\pi\cdot10^{-13}$: this is justified by the fact that $\sigma$ is very close to machine precision. We do not report the associated convergence curves. Results for $\sigma=\pi\cdot10^{-3}$ are collected in Figure~\ref{fig:shift_sigma-3} (same format as for Figure~\ref{fig:shift_sigma0}). We first notice that $\c_{1,h}$ gives sub-optimal results in agreement with the chosen mesh refinement technique and Theorem~\ref{theo:c1h}. For all other cases, the method is optimal.
	
	We conclude that the value of the shift $\sigma$, i.e. the position of the interface between solid and fluid, does not affect either the condition number or the accuracy of the method. A lack of convergence can only be caused by an inappropriate choice of integration technique for the coupling matrix.
	
	\begin{figure}
		\centering
		\vspace{5mm}
		\textbf{Shifted square with $\sigma=0$: $\LdBd$ coupling \textit{vs} $\Hub$ coupling }\\
		\vspace{3mm}
		\includegraphics[width=0.24\linewidth]{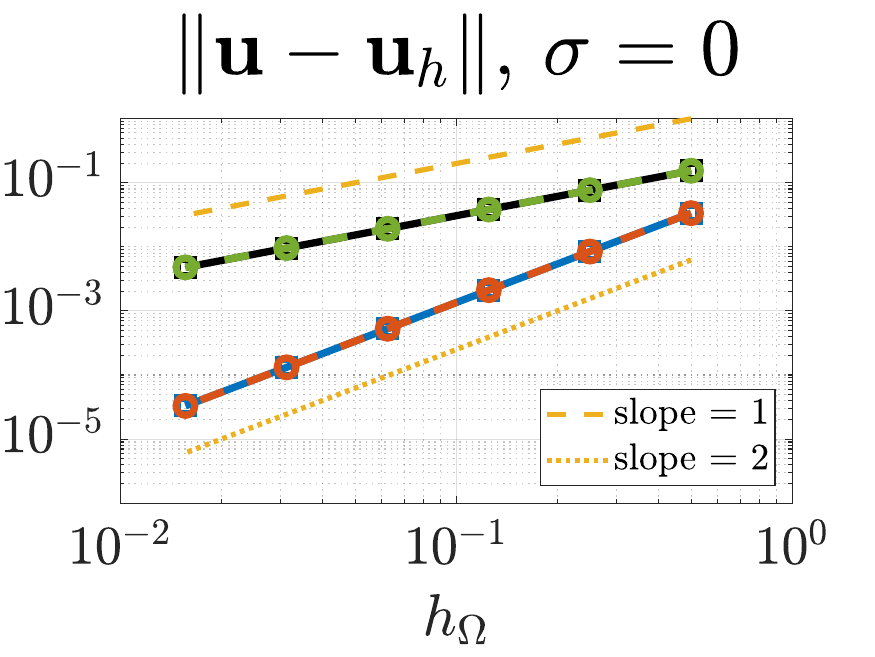}
		\includegraphics[width=0.24\linewidth]{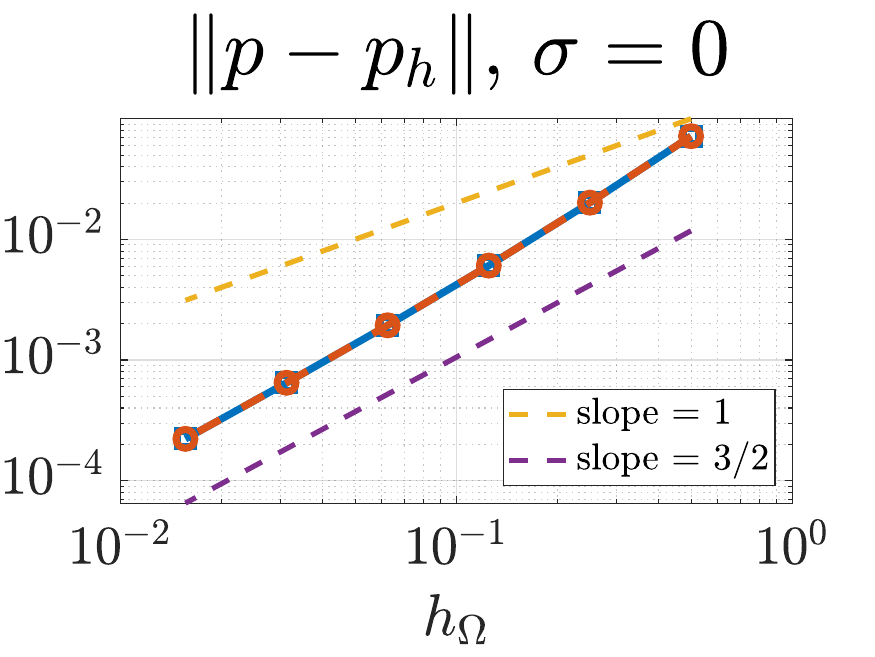}
		\includegraphics[width=0.24\linewidth]{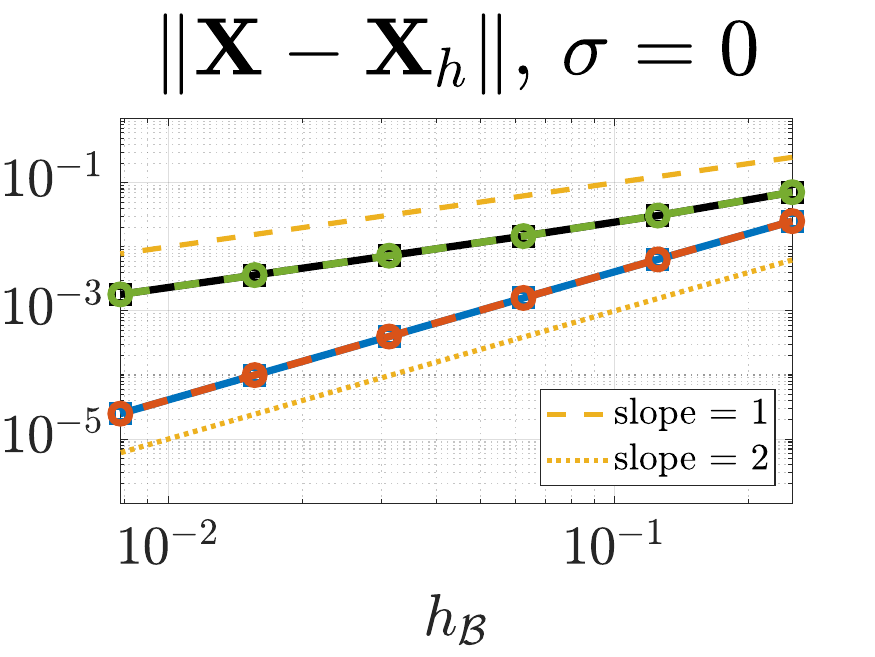}
		\includegraphics[width=0.24\linewidth]{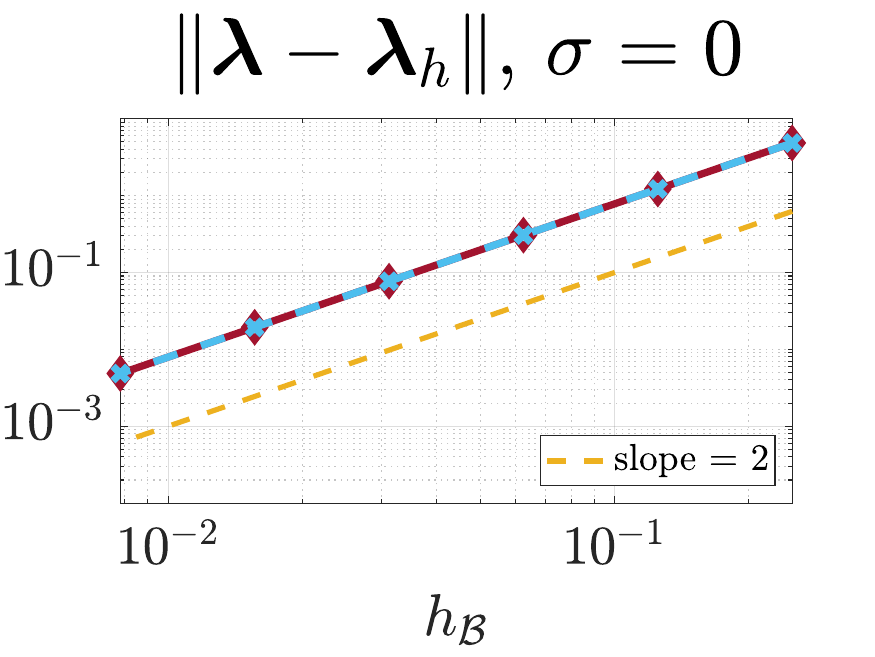}
		\includegraphics[width=0.94\linewidth]{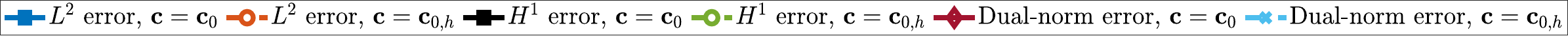}
		
		\
		
		\includegraphics[width=0.24\linewidth]{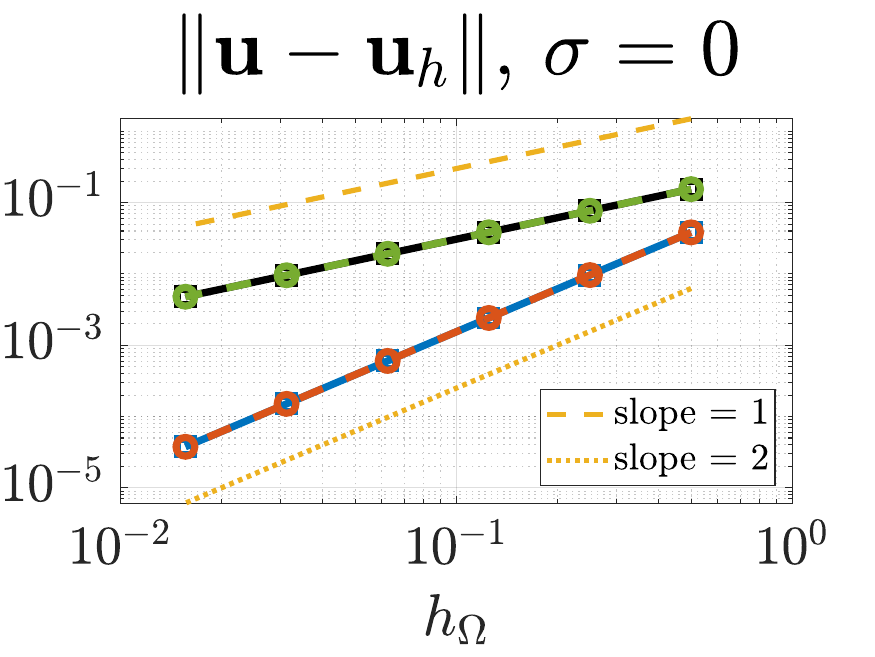}
		\includegraphics[width=0.24\linewidth]{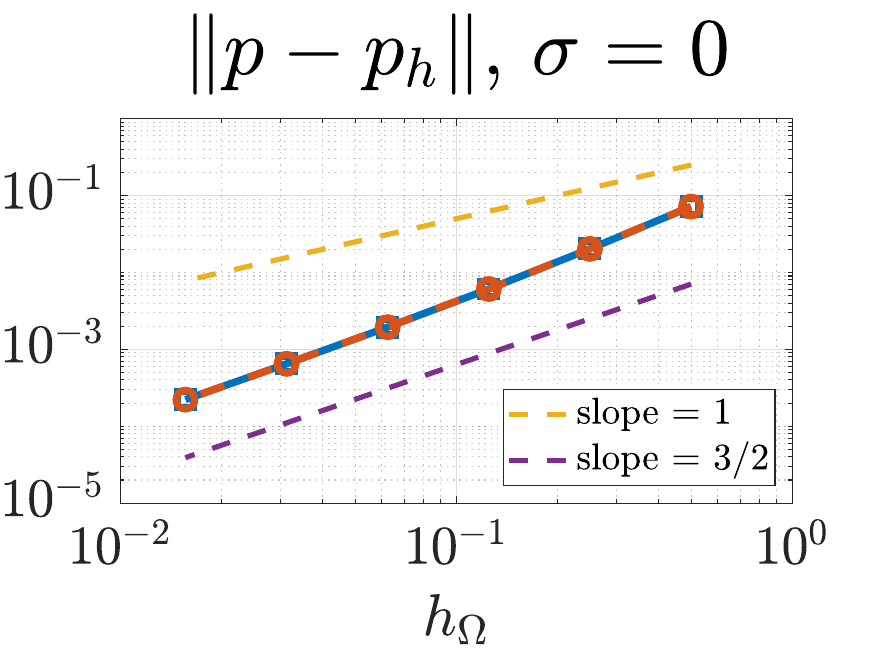}
		\includegraphics[width=0.24\linewidth]{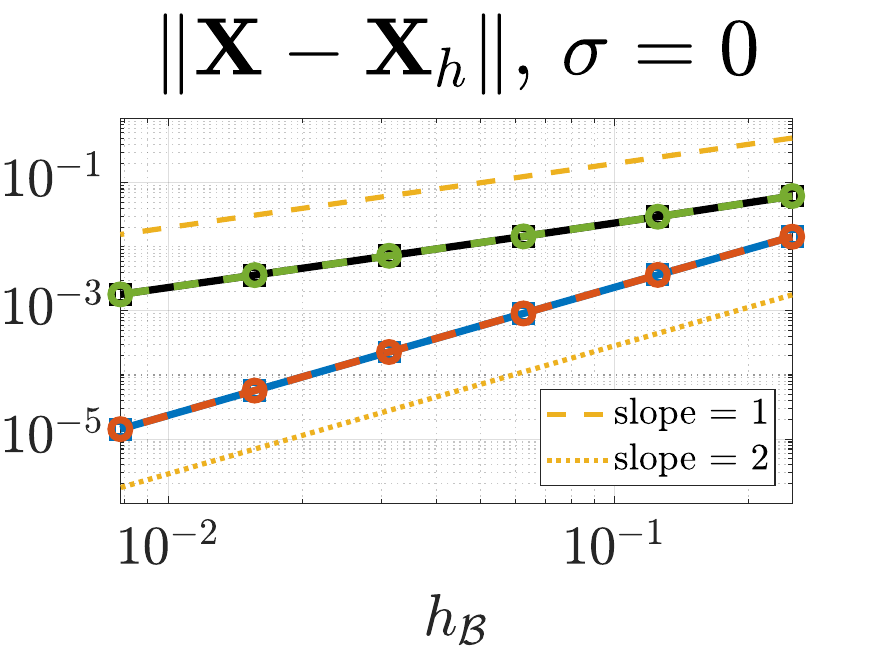}
		\includegraphics[width=0.24\linewidth]{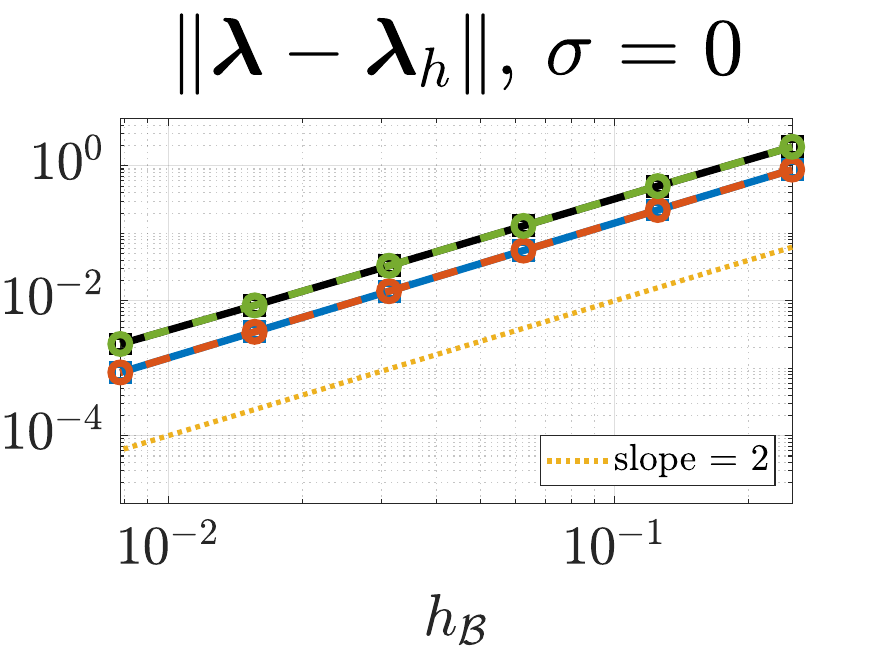}
		\includegraphics[width=0.55\linewidth]{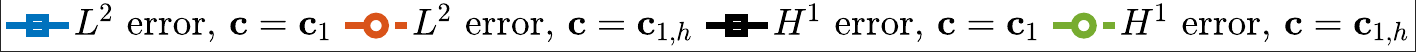}
		\caption{Error convergence for the shifted square test for $\sigma=0$. The immersed solid mesh perfectly matches with the fluid one. Thus, the coupling term is always assembled exactly provided that the employed quadrature rule is sufficiently precise.}
		\label{fig:shift_sigma0}	
	\end{figure}
	
	\begin{figure}
		\centering
		\vspace{5mm}
		\textbf{Shifted square with $\sigma=\pi\cdot10^{-3}$: $\LdBd$ coupling \textit{vs} $\Hub$ coupling}\\
		\vspace{3mm}
		\includegraphics[width=0.24\linewidth]{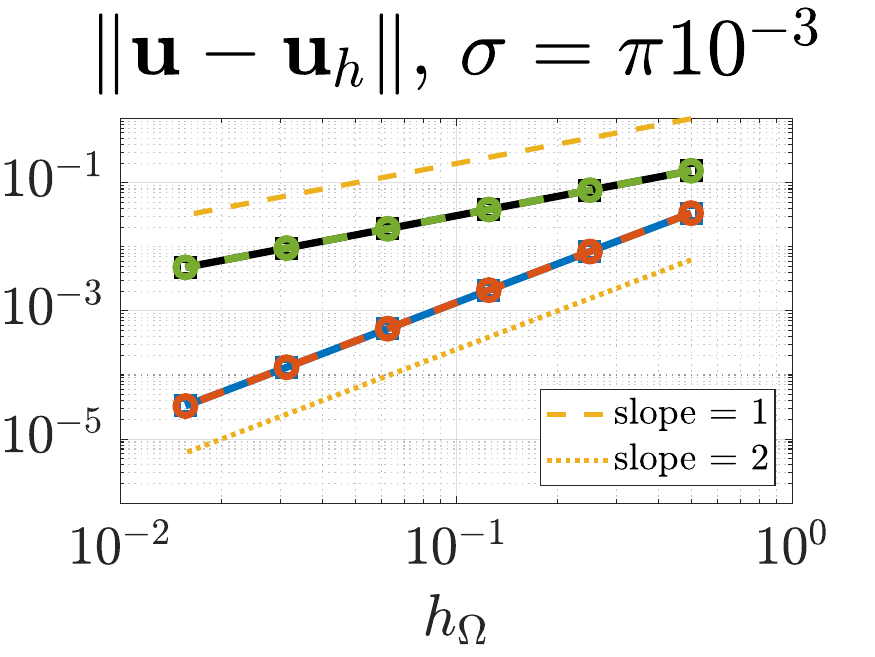}
		\includegraphics[width=0.24\linewidth]{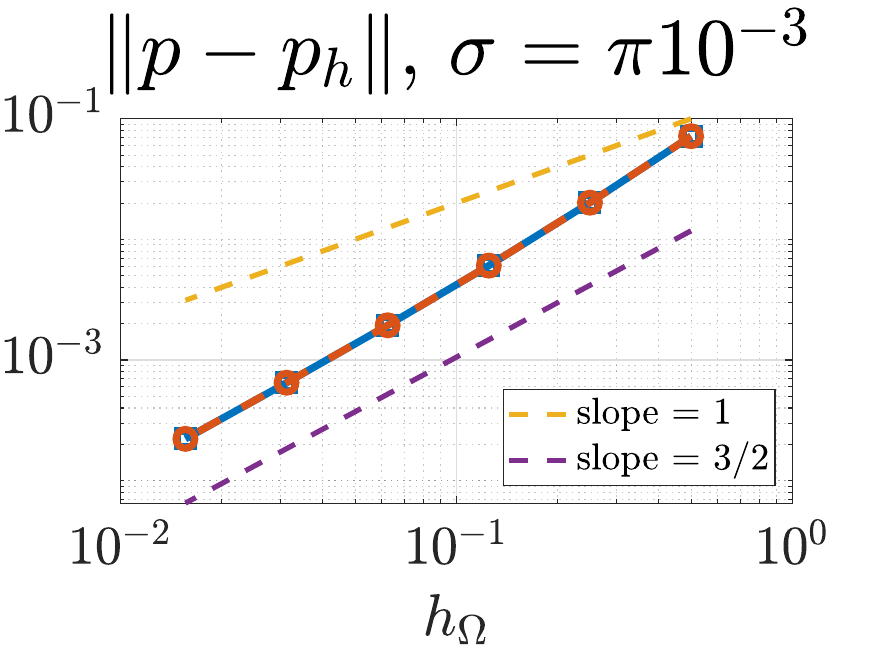}
		\includegraphics[width=0.24\linewidth]{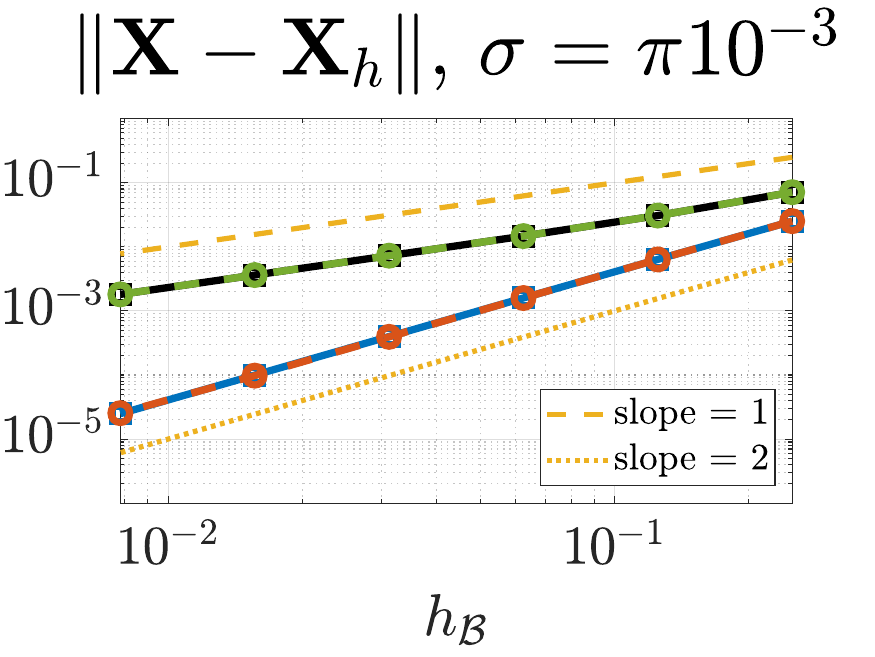}
		\includegraphics[width=0.24\linewidth]{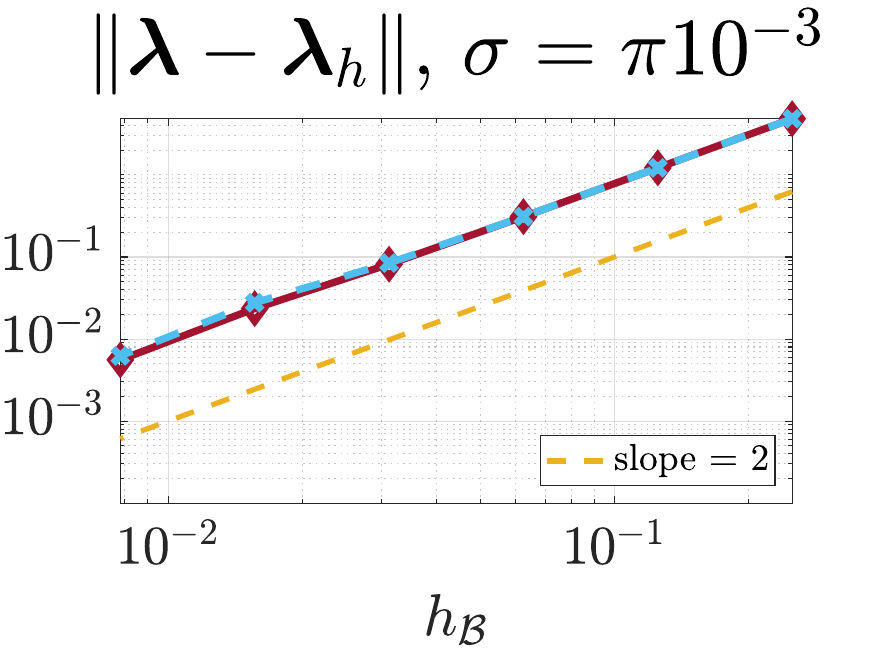}
		\includegraphics[width=0.94\linewidth]{figures_paper/legend_l2}
		
		\
		
		\includegraphics[width=0.24\linewidth]{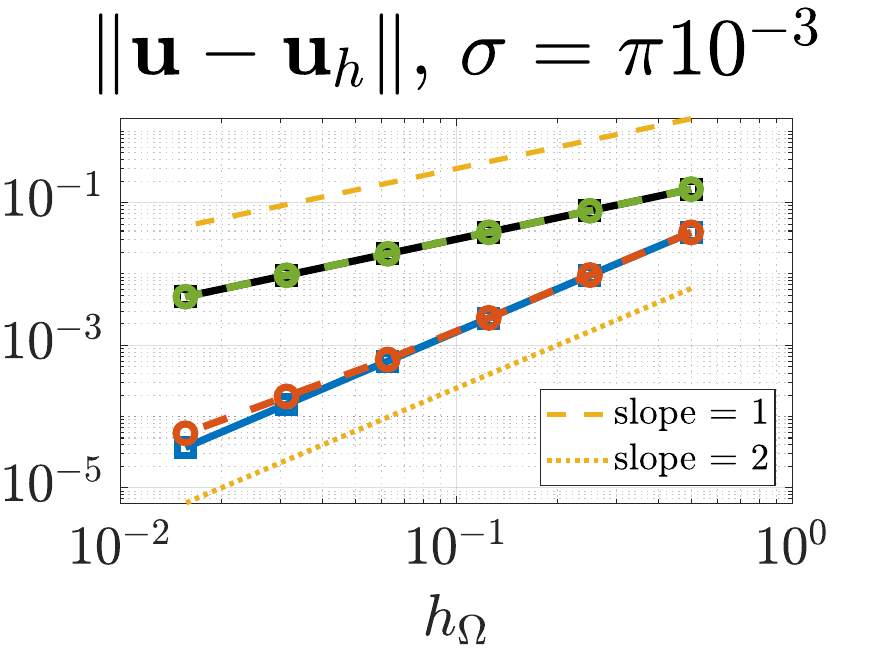}
		\includegraphics[width=0.24\linewidth]{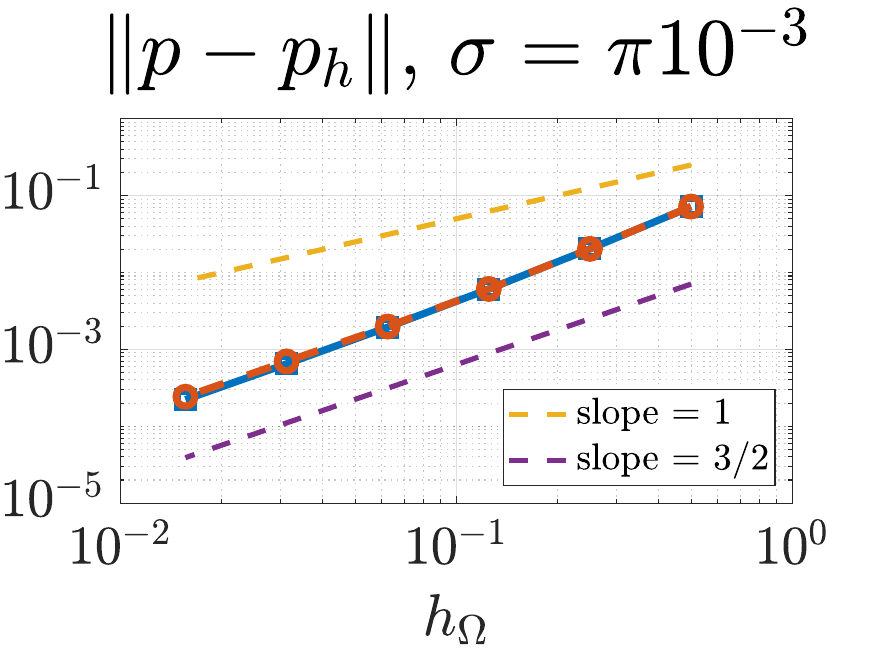}
		\includegraphics[width=0.24\linewidth]{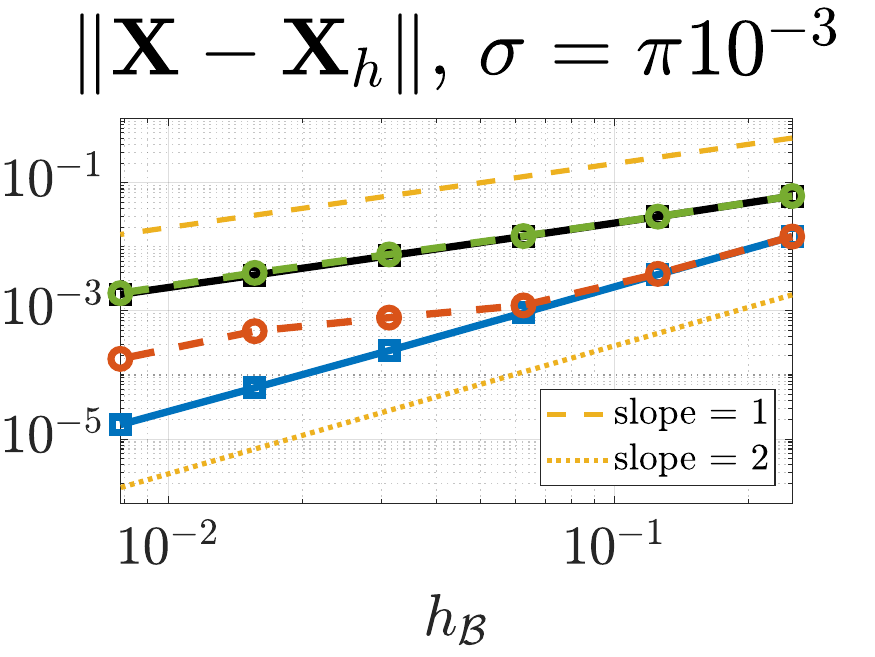}
		\includegraphics[width=0.24\linewidth]{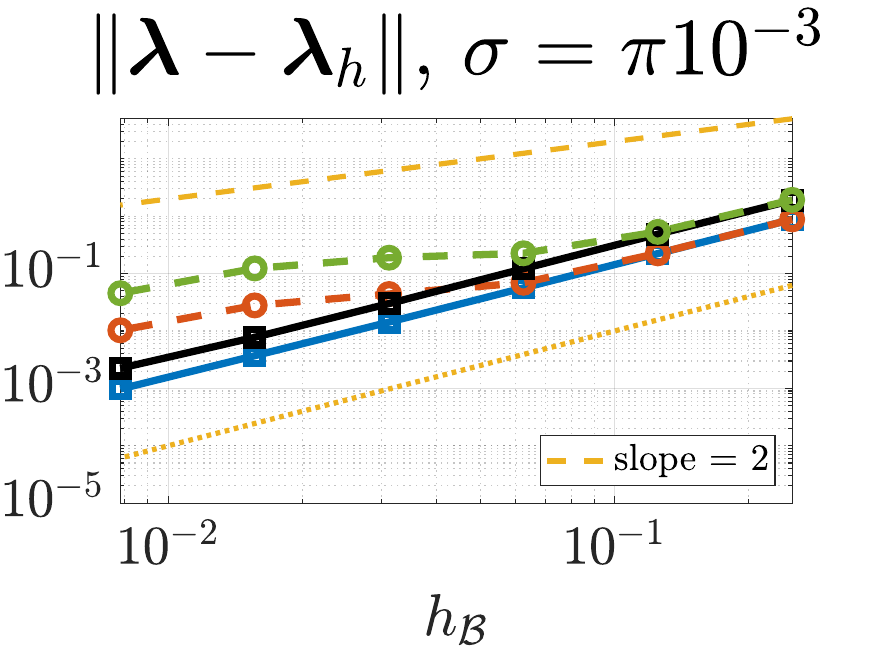}
		\includegraphics[width=0.55\linewidth]{figures_paper/legend_h1}
		\caption{Error convergence for the shifted square test for $\sigma=\pi\cdot10^{-3}$. The sub-optimality of $\c_{1,h}$ agrees with Theorem~\ref{theo:c1h} and $h_\Omega/h_\B$ constant.}
		\label{fig:shift_sigma-3}
		
	\end{figure}
	
	\subsection{Disk, flower and stretched annulus}
	
	We now consider three different geometries for the immersed solid: the disk, the flower-shaped domain and the stretched annulus. In all cases, the fluid domain is the unit square discretized by structured triangulations, whereas the solid mesh is unstructured, but quasi-uniform. The initial meshes are refined five times and $h_\Omega/h_\B$ is kept constant. 
	In Figure~\ref{fig:geo} we show the meshes corresponding to the first level. Moreover, 
	Table~\ref{tab:small_cells} reports the area of the smallest intersection of each refinement.
	
	\begin{figure}
		\centering
		\includegraphics[width=0.3\linewidth]{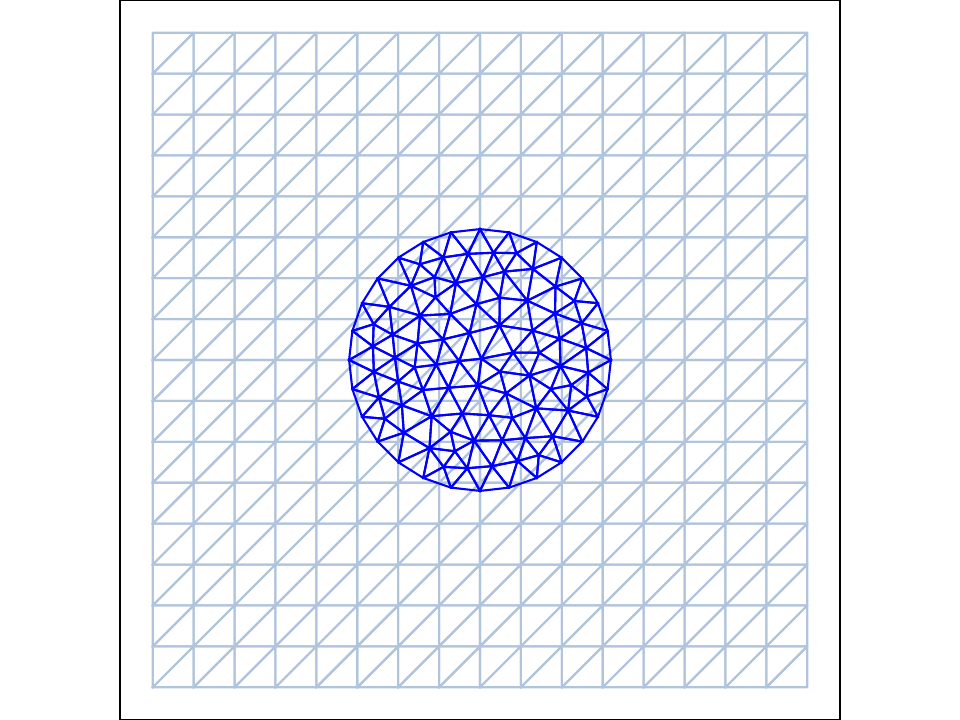}
		\includegraphics[width=0.3\linewidth]{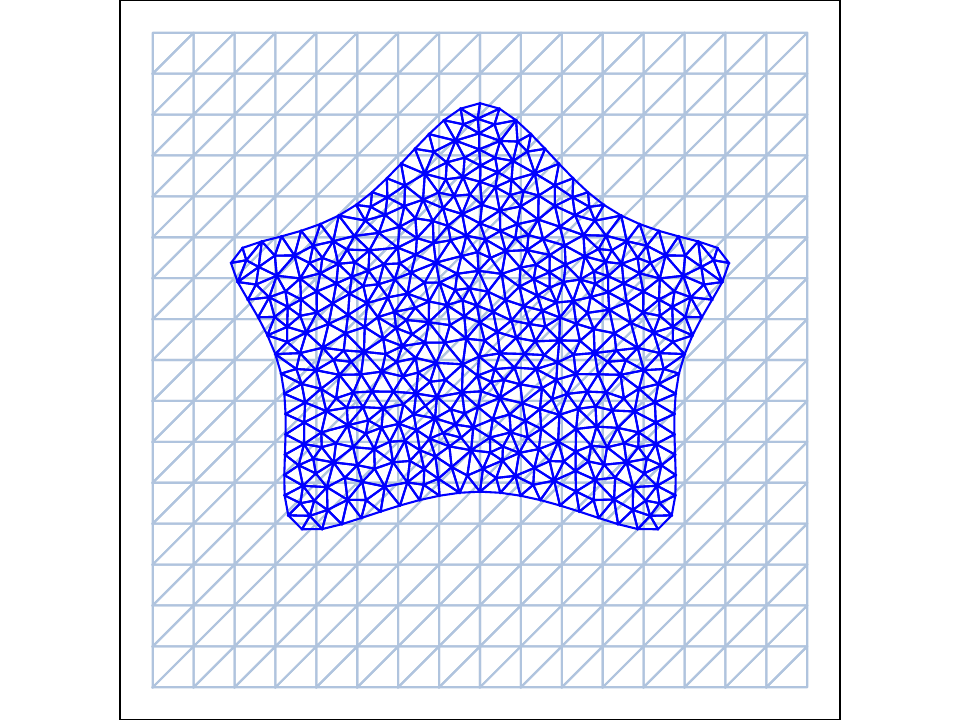}
		\includegraphics[width=0.3\linewidth]{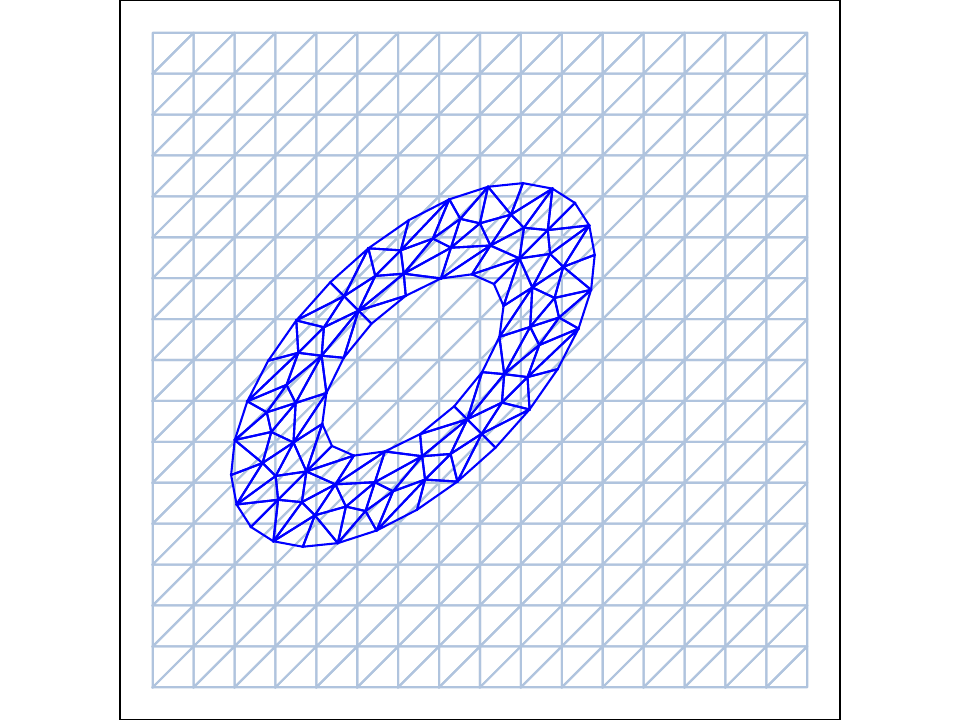}\\
		\caption{Geometric configuration of disk, flower, and annulus at the coarsest level. Solid meshes were generated with $\mathtt{Gmsh}$~\cite{geuzaine2009gmsh}.}
		\label{fig:disk_flower_annulus}
	\end{figure}
	
	\begin{table}[ht]\renewcommand{\arraystretch}{1.5}
		\centering
		\begin{tabular}{c|c|c|c}
			\multicolumn{4}{c}{\textbf{\textsc{Area of Smallest Intersection}}}\\
			\hline
			\textbf{Level} & \textbf{Disk} & \textbf{Flower} & \textbf{Annulus}\\
			\hline
			1 & 7.336\,e--10 & 1.149\,e--10 & 3.235\,e--10\\
			2 & 1.206\,e--10 & 2.043\,e--10 & 1.378\,e--10\\
			3 & 1.005\,e--10 & 1.112\,e--10 & 1.089\,e--10\\
			4 & 1.000\,e--10 & 1.009\,e--10 & 1.002\,e--10\\
			5 & 1.001\,e--10 & 1.001\,e--10 & 1.000\,e--10\\
			6 & 1.000\,e--10 & 1.000\,e--10 & 1.000\,e--10\\
			\hline
		\end{tabular}
		\caption{Area of the smallest cut cell arising from the mesh intersection $\Xbar(\T_h^\B)\cap\T_{h/2}^\Omega$. The first column indicates the level of mesh refinement.}
		\label{tab:small_cells}
	\end{table}

	\begin{figure}
		\centering
		\includegraphics[width=0.328\linewidth]{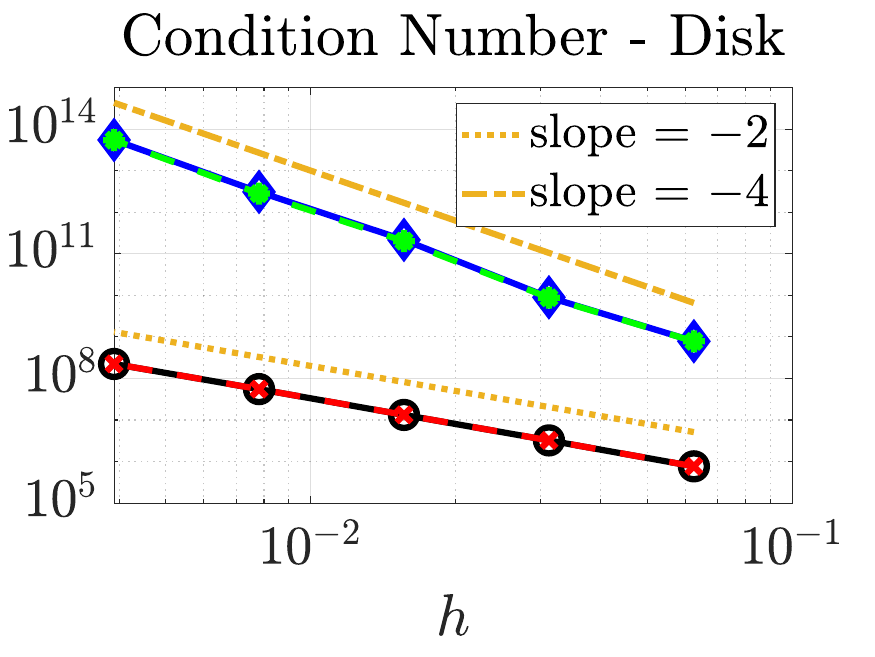}
		\includegraphics[width=0.328\linewidth]{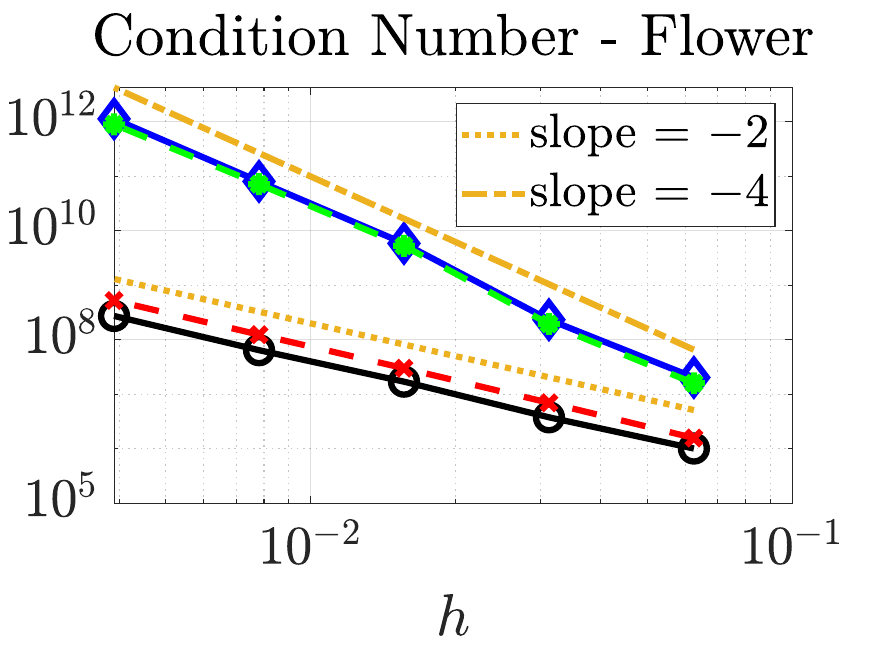}
		\includegraphics[width=0.328\linewidth]{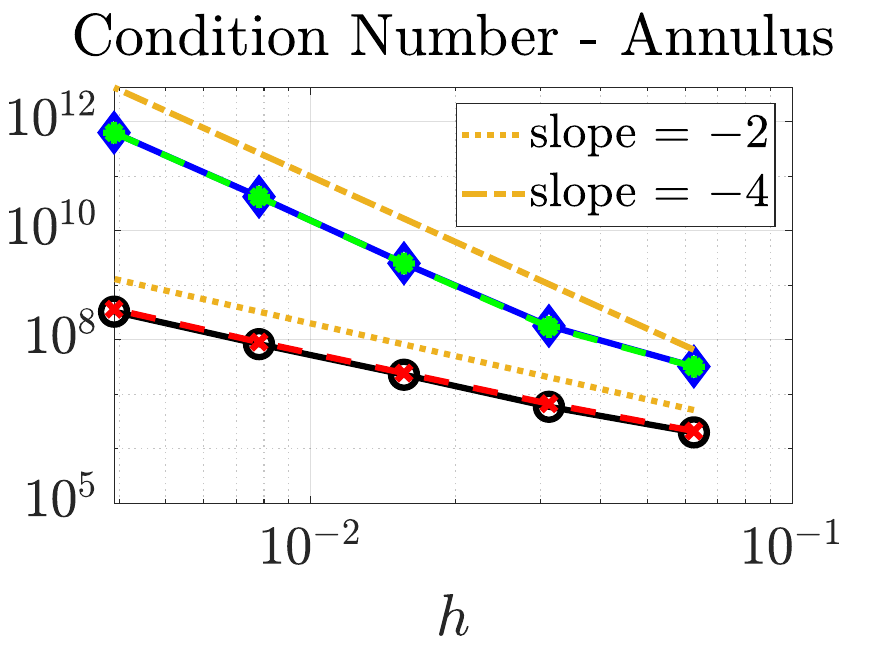}
		\includegraphics[width=0.35\linewidth]{figures_paper/legend_cond}
		\caption{Condition number of disk, flower, and annulus tests.}
		\label{fig:dfa_cond}
	\end{figure}
	
	We now briefly report the details regarding the geometric configuration and the manufactured solutions of each case.
	
	\subsubsection{The disk}
	
	The immersed solid is represented by the disk with radius $R=1/5$ centered at $(1/2,1/2)$. We assume that $\B$ corresponds to the actual position of the body, thus $\Xbar$ is the identity function. We set $\alpha=\beta=0$ and $\gamma=1$ and we compute the right hand sides according to the following solution
	\begin{equation*}
		\begin{aligned}
			&\u(x,y) = \big( 2x^2y(x-1)^2(y-1)(2y-1), -2xy^2(x-1)(2x-1)(y-1)^2 \big)\\
			&p(x,y) =
			\begin{cases}
				\sin(\pi x)\sin(\pi y) - {4}/{\pi^2} - |\Os|/(2|\Of|) & \text{in }\Of\\
				\sin(\pi x)\sin(\pi y) - {4}/{\pi^2} + 1/2 & \text{in }\Os=\B
			\end{cases}\\
			&\X(s_1,s_2) = \big(s_1^4 - 2s_1^3 + s_1^2,- 2s_2^3 + 3s_2^2 - s_2\big)\\
			&\llambda(s_1,s_2) = \big(s_2\sin(s_1),s_2\cos(s_1)\big).
		\end{aligned}
	\end{equation*}
	We point out that we are considering a discontinuous pressure.
	
	\subsubsection{The flower}
	
	For this test, the solid domain is identified by a flower-shaped boundary with inscribed circle centered at $(1/2,1/2)$ and radius $1/4$. We set again $\B=\Os$, $\alpha=\beta=0$ and $\gamma=1$. We choose the following solution
	\begin{equation*}
		\begin{aligned}
			&\u(x,y) = \big(-x\sin(xy),\,y\sin(xy) \big),
			&&\qquad p(x,y) = \cos(xy)-\int_\Omega\cos(x,y)\,\dx,\\
			&\X(s_1,s_2) = \u(s_1,s_2),
			&&\qquad\llambda(s_1,s_2) = (s_2\sin(s_1),s_2\cos(s_1)),
		\end{aligned}
	\end{equation*}
	and we compute the right hand sides accordingly.
	
	\begin{figure}
		\centering
		\textbf{Disk: $\LdBd$ coupling \textit{vs} $\Hub$ coupling }\\
		\vspace{3mm}
		\includegraphics[width=0.24\linewidth]{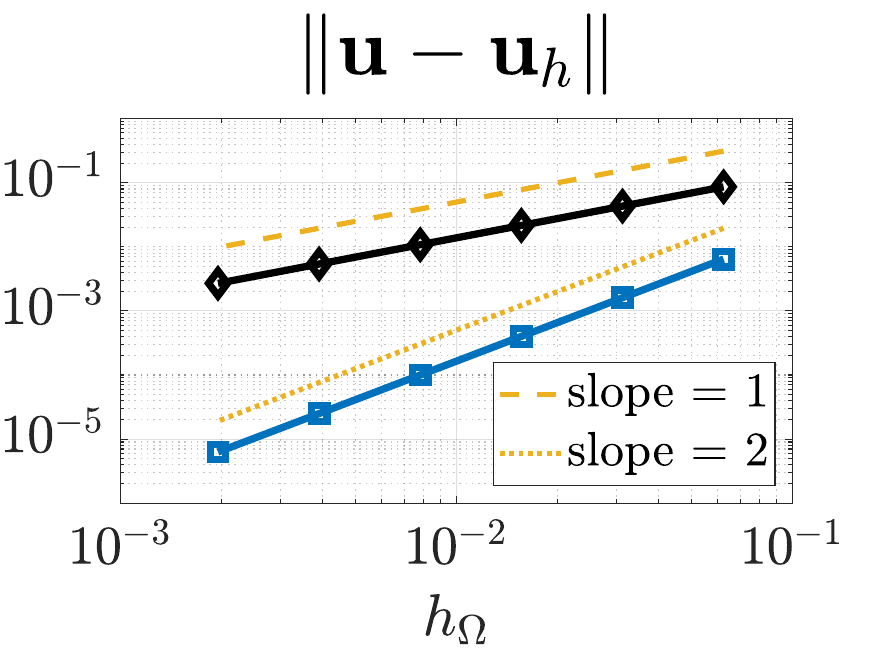}
		\includegraphics[width=0.24\linewidth]{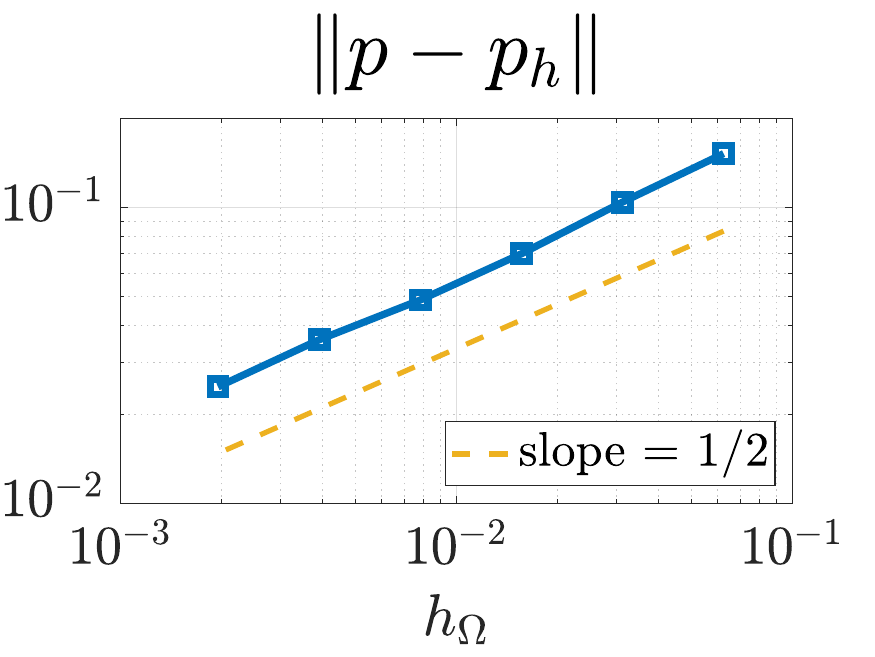}
		\includegraphics[width=0.24\linewidth]{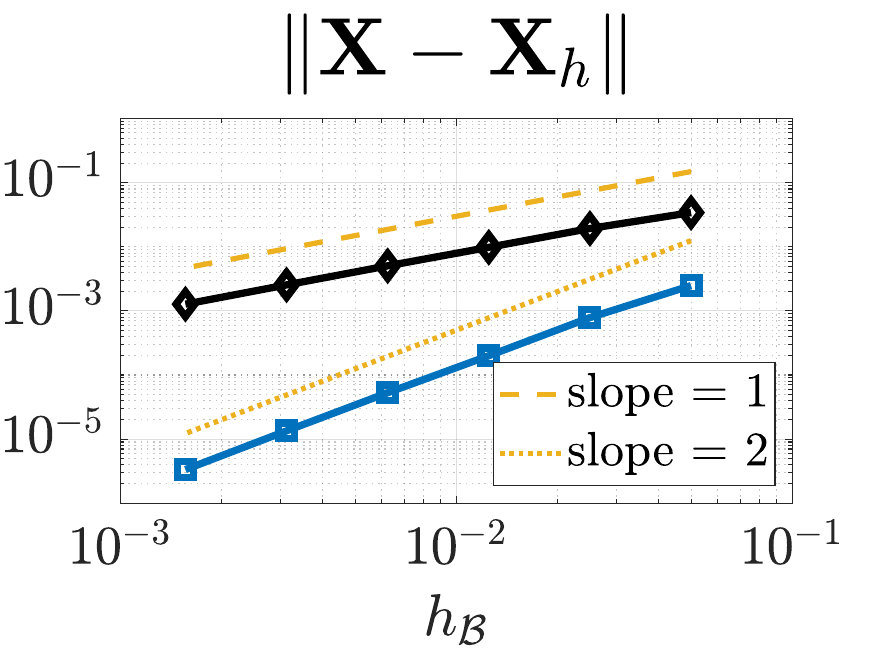}
		\includegraphics[width=0.24\linewidth]{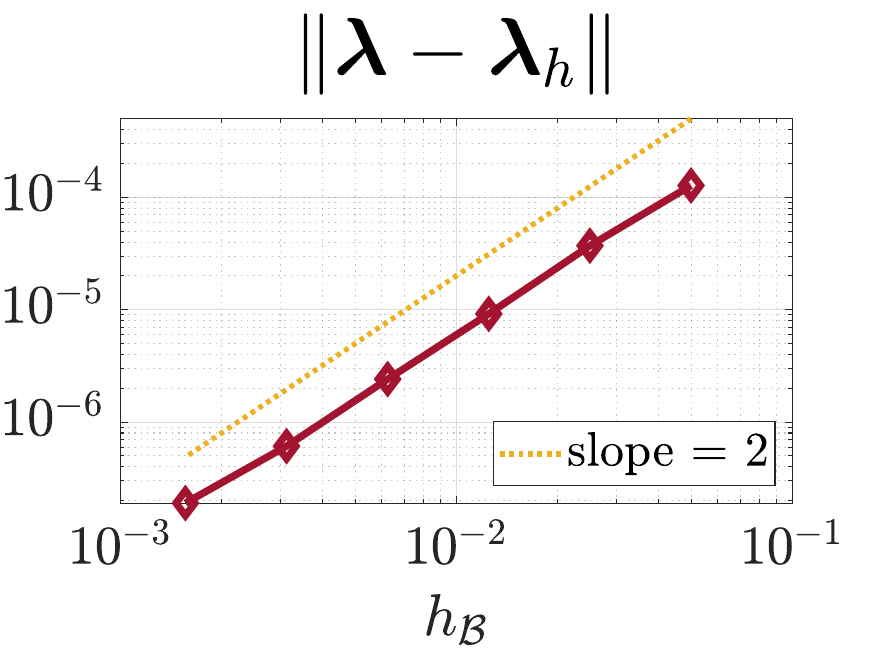}
		\includegraphics[width=0.425\linewidth]{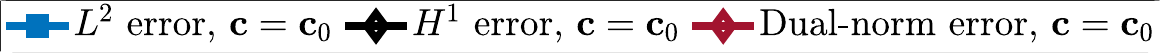}
		
		\
		
		\includegraphics[width=0.24\linewidth]{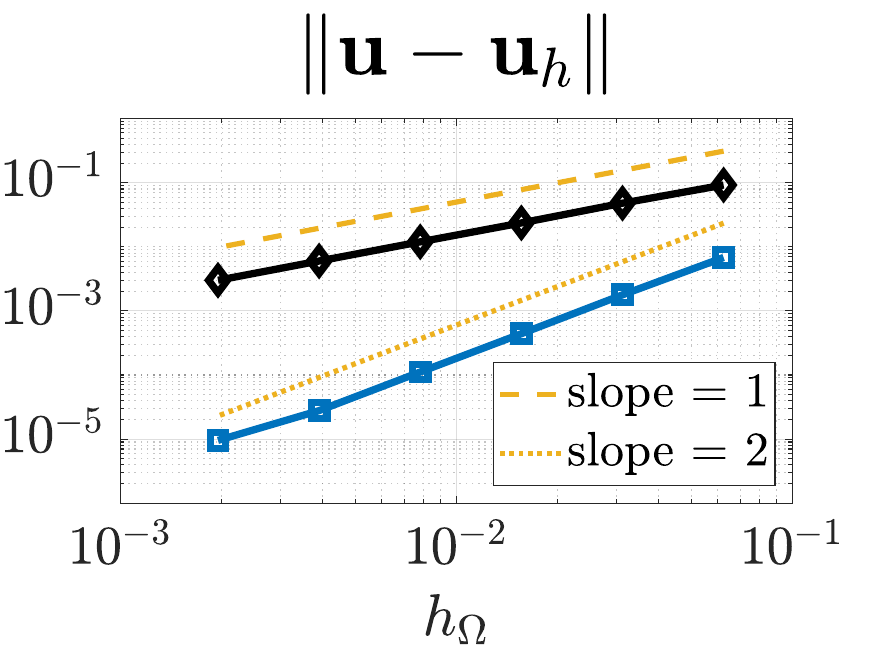}
		\includegraphics[width=0.238\linewidth]{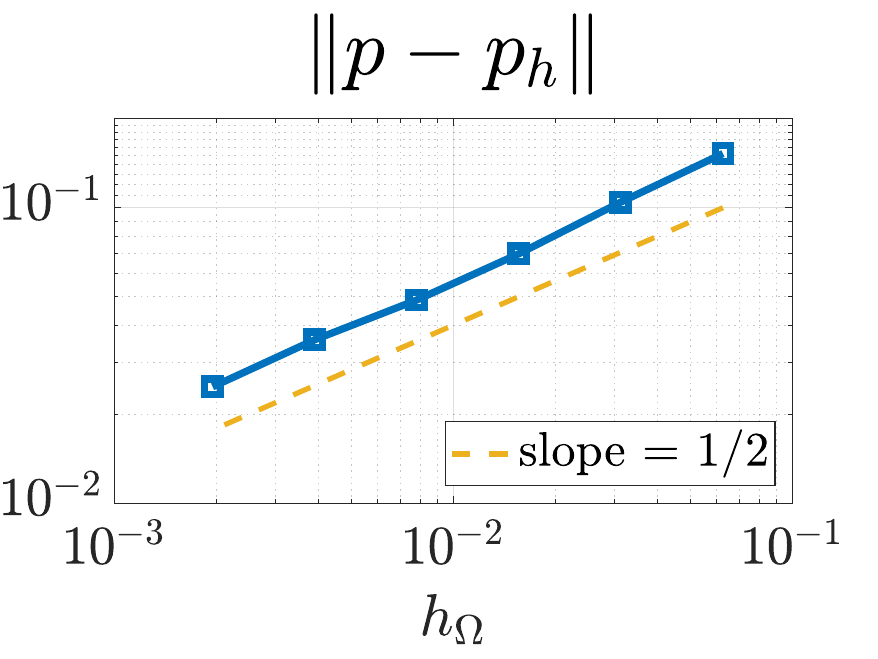}
		\includegraphics[width=0.24\linewidth]{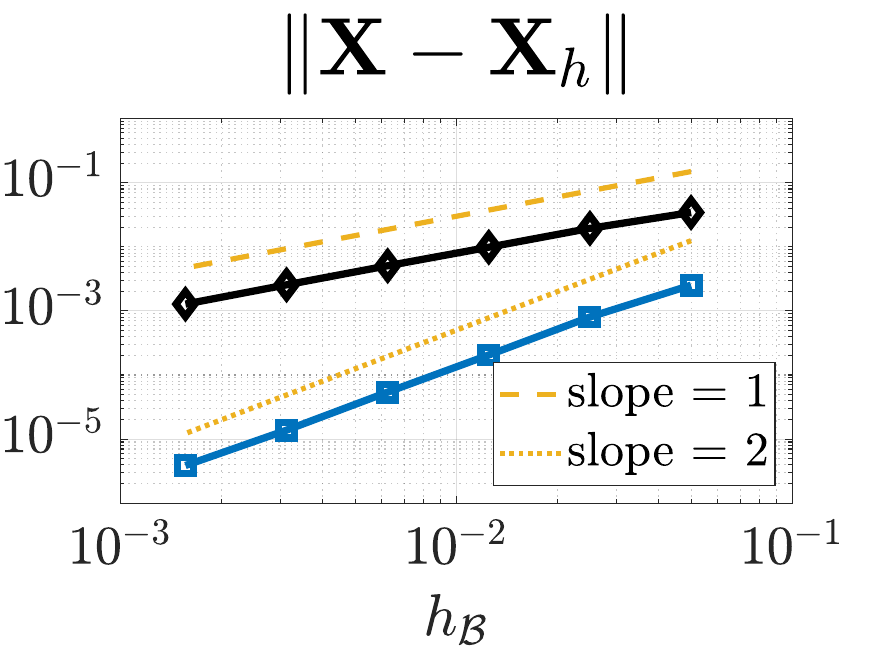}
		\includegraphics[width=0.24\linewidth]{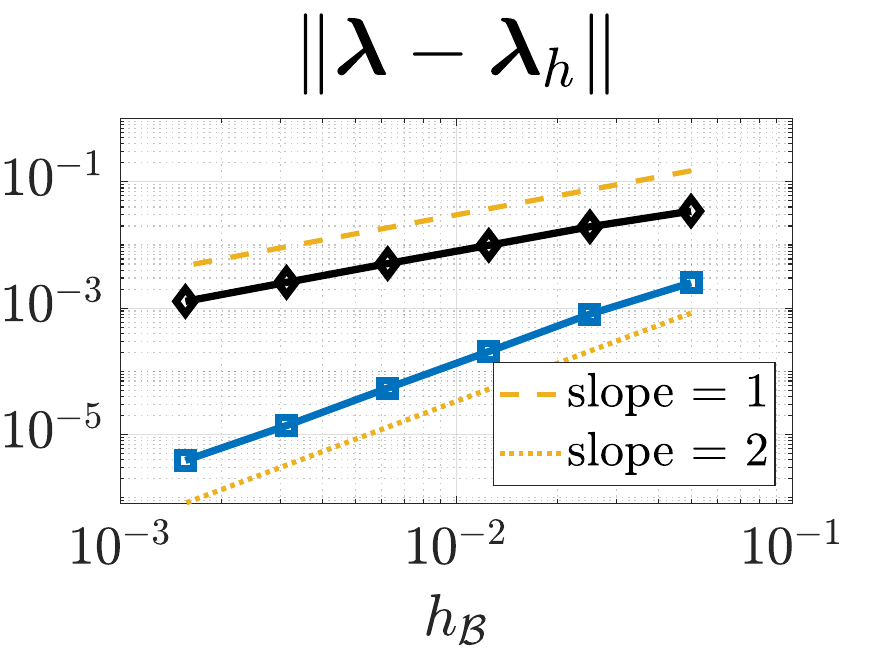}
		\includegraphics[width=0.25\linewidth]{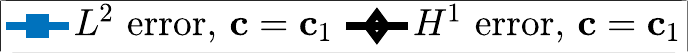}
		\caption{Error convergence for the disk test.}
		\label{fig:circle}
	\end{figure}
	
	\subsubsection{The stretched annulus}
	
	In this case, the actual position of the solid body is given by the stretched annulus obtained by mapping the reference annulus
	\begin{equation*}
		\B = \left\{\s\in\R^2: \frac1{16} \le \left(s_1-\frac12\right)^2+\left(s_2-\frac12\right)^2 \le \frac1{64}\right\},
	\end{equation*}
	by means of
	\begin{equation*}
		\Xbar(s_1,s_2) = \left( \frac{\sqrt{2}}{2}(s_1+s_2)-\frac7{20},\,\frac{\sqrt{2}}{2}(-s_1+s_2)-\frac14\right).
	\end{equation*}
	We set $\alpha=100$, $\beta=200$ and $\gamma=0.03$. We choose the right hand sides so that the problem is solved by the following functions
	\begin{equation*}
		\begin{aligned}
			&\u(x,y) = \big( 2x^2y(x-1)^2(y-1)(2y-1), -2xy^2(x-1)(2x-1)(y-1)^2 \big)\\
			&p(x,y) = x(x-1)(y-1)-1/12\\
			&\X(s_1,s_2) = (-s_1\sin(s_1s_2),\,s_2\sin(s_1s_2)))\\
			&\llambda(s_1,s_2) = (e^{s_1},e^{s_2}).
		\end{aligned}
	\end{equation*}
	
	Since the numerical results show similar behaviors, we describe our findings all together. In Figure~\ref{fig:dfa_cond} we display the condition number as a function of $h$. \lg We considered both the exact and approximate integration of the coupling term\gl. The growth rate is $4$ for $\c_0$ and $2$ for $\c_1$, thus giving the expected values reported in~\eqref{eq:cond_tests}. By combining the information collected in Table~\ref{tab:small_cells} with Figure~\ref{fig:dfa_cond}, it is clear that the condition number is not influenced by the presence of small cut cells.

	\begin{figure}
		\centering
		\textbf{Flower: $\LdBd$ coupling \textit{vs} $\Hub$ coupling }\\
		\vspace{3mm}
		\includegraphics[width=0.24\linewidth]{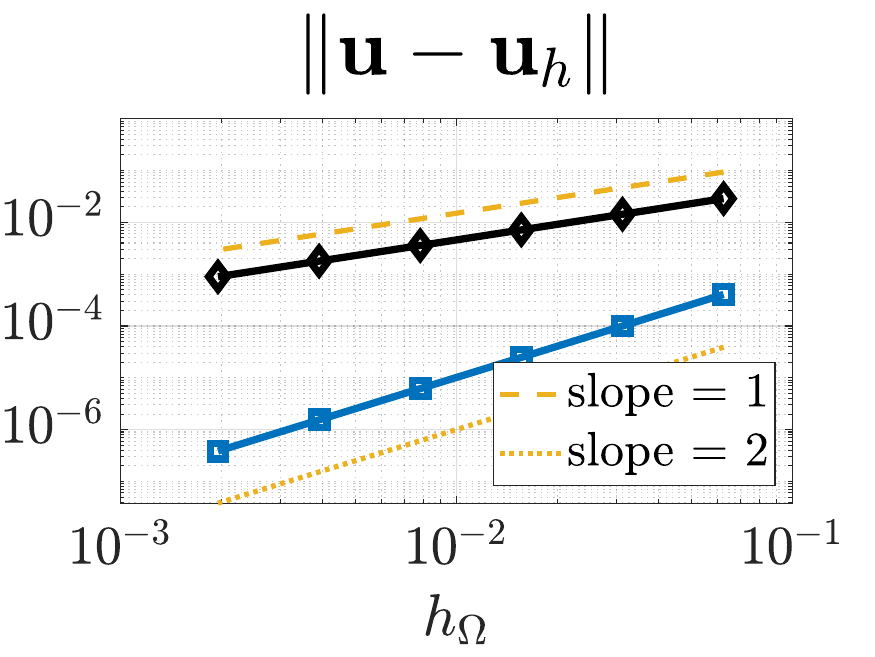}
		\includegraphics[width=0.24\linewidth]{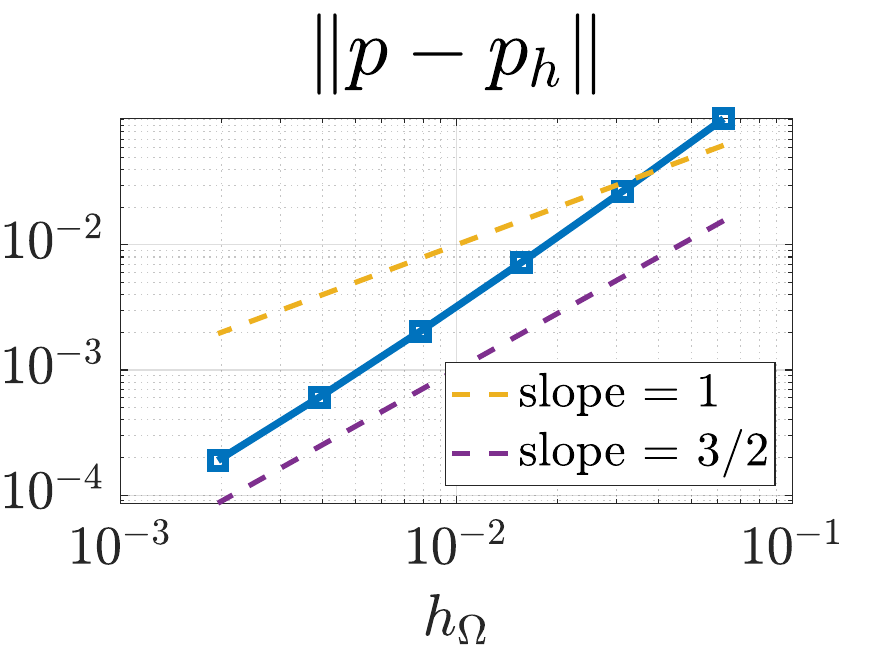}
		\includegraphics[width=0.24\linewidth]{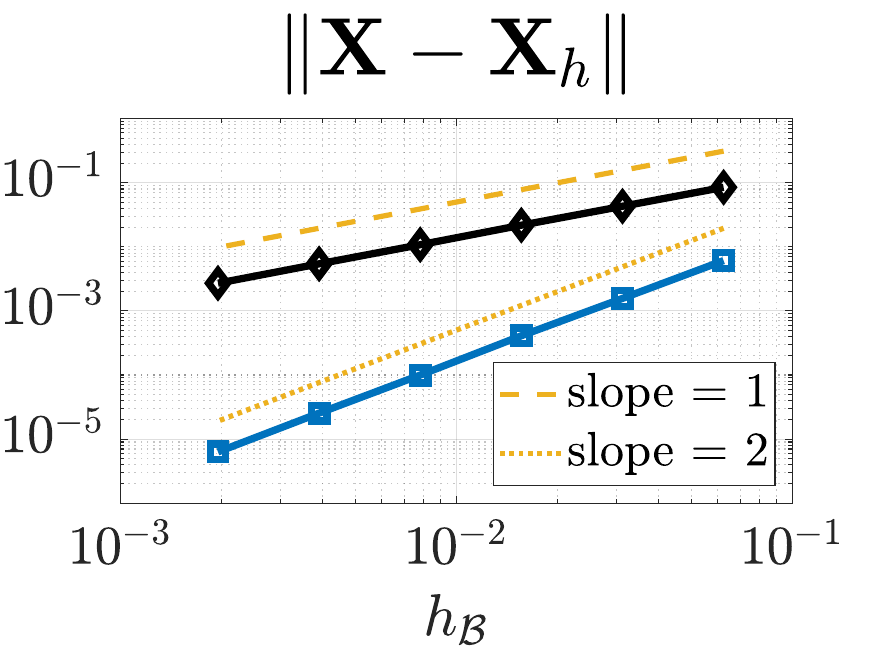}
		\includegraphics[width=0.24\linewidth]{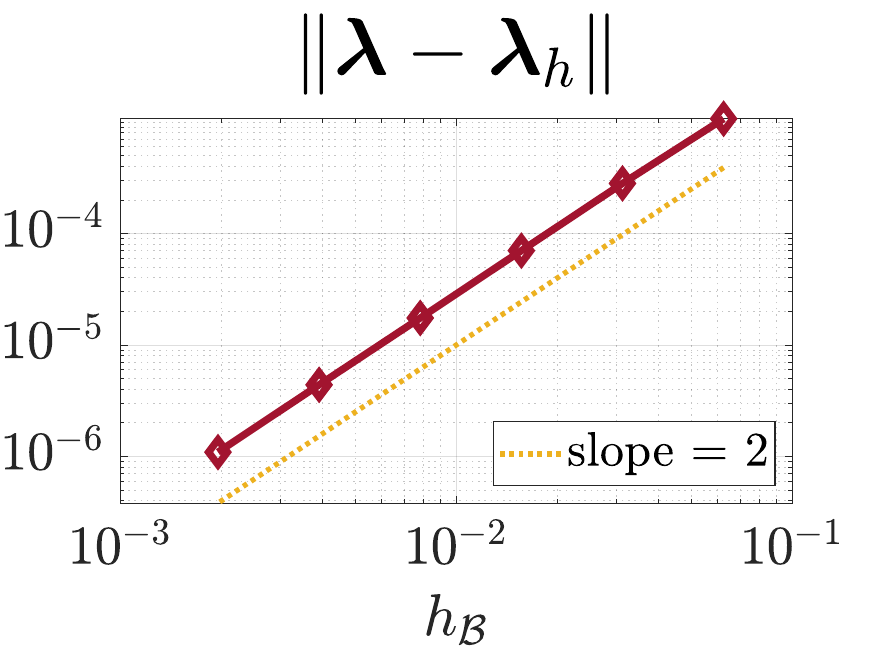}		
		\includegraphics[width=0.425\linewidth]{figures_paper/legend_l2_2}
		
		\
		
		\includegraphics[width=0.24\linewidth]{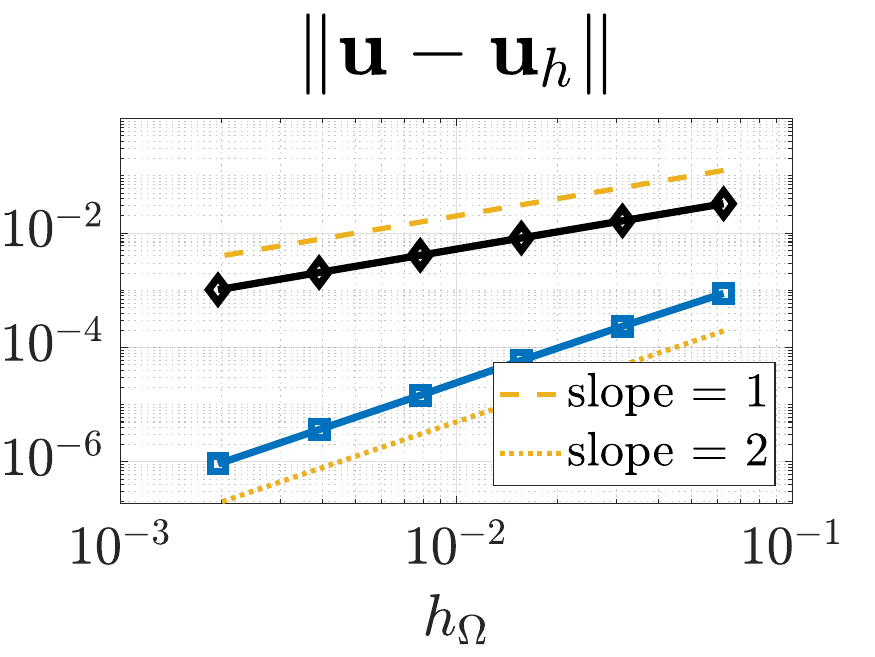}
		\includegraphics[width=0.24\linewidth]{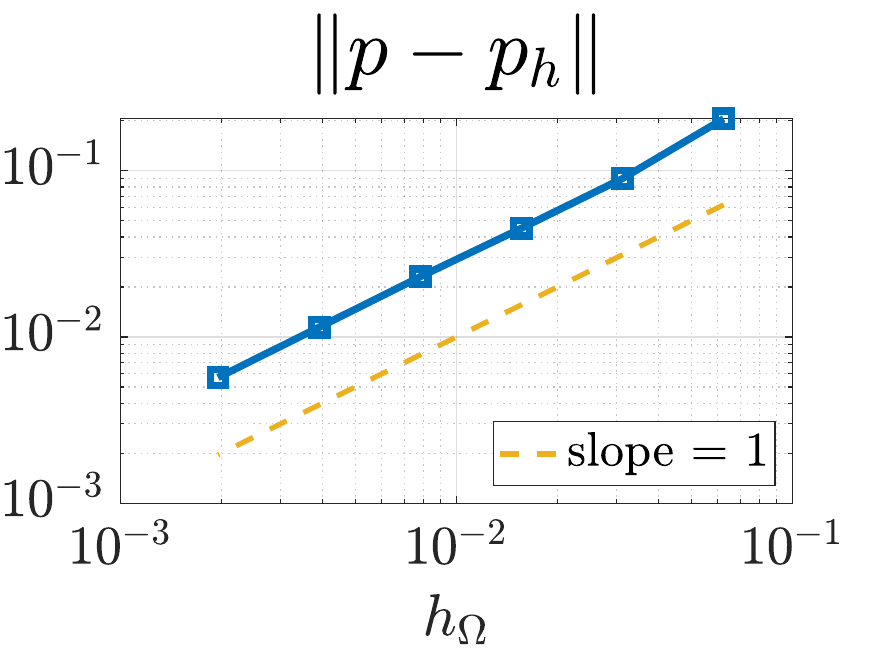}
		\includegraphics[width=0.24\linewidth]{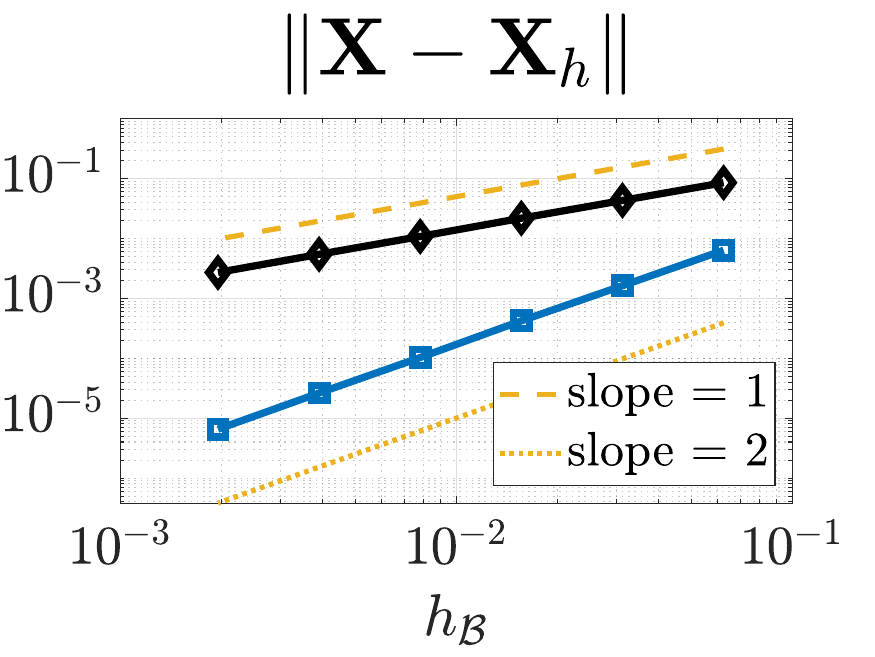}	
		\includegraphics[width=0.24\linewidth]{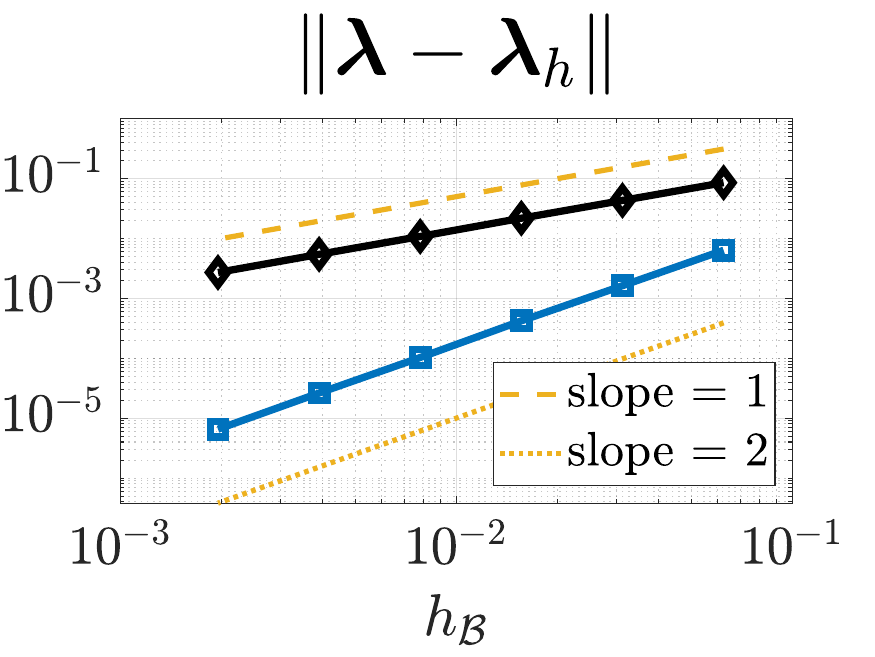}
		\includegraphics[width=0.25\linewidth]{figures_paper/legend_h1_2}
		\caption{Error convergence for the flower-shaped solid test.}
		\label{fig:flower}
	\end{figure}
	
	Figures~\ref{fig:circle}--\ref{fig:annulus} display the rate of
	convergence for the three examples under investigation \lg in case of exact computation of the coupling term\gl. As before, the results
	for $\c_0$ are in the top line, while those for $\c_1$ are in the bottom line.

	\begin{figure}
		\centering
		\textbf{Annulus: $\LdBd$ coupling \textit{vs} $\Hub$ coupling }\\
		\vspace{3mm}
		\includegraphics[width=0.24\linewidth]{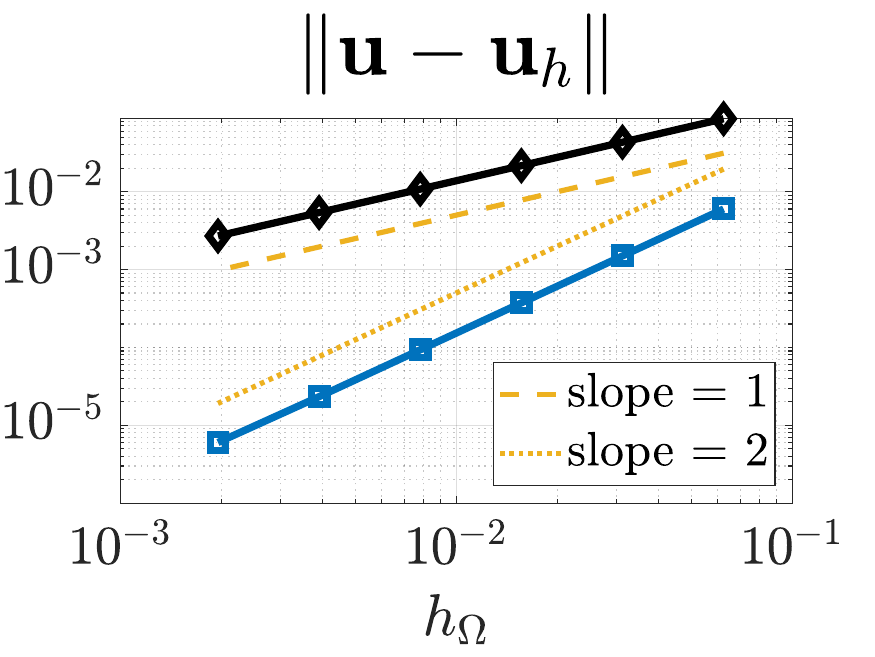}
		\includegraphics[width=0.24\linewidth]{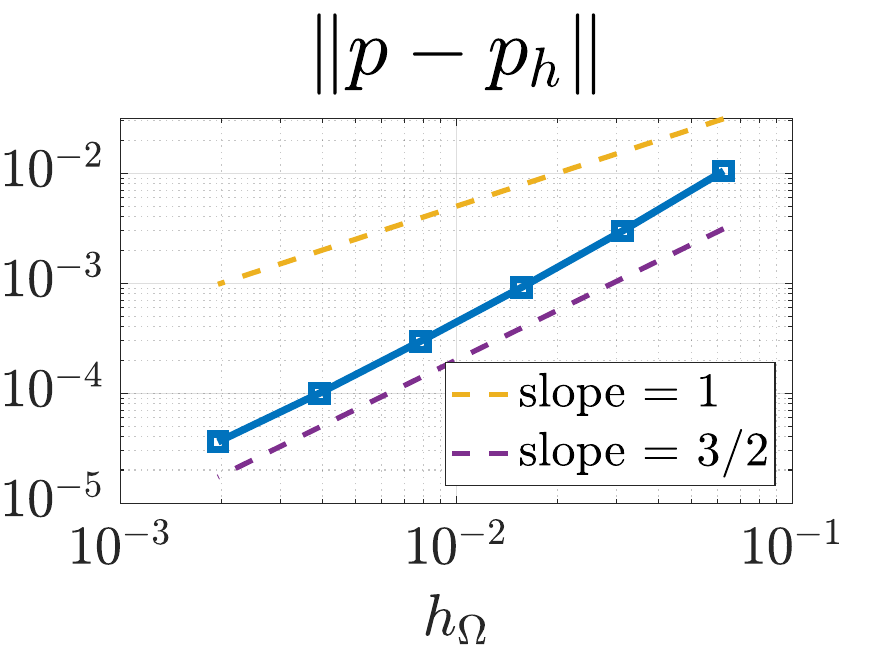}
		\includegraphics[width=0.24\linewidth]{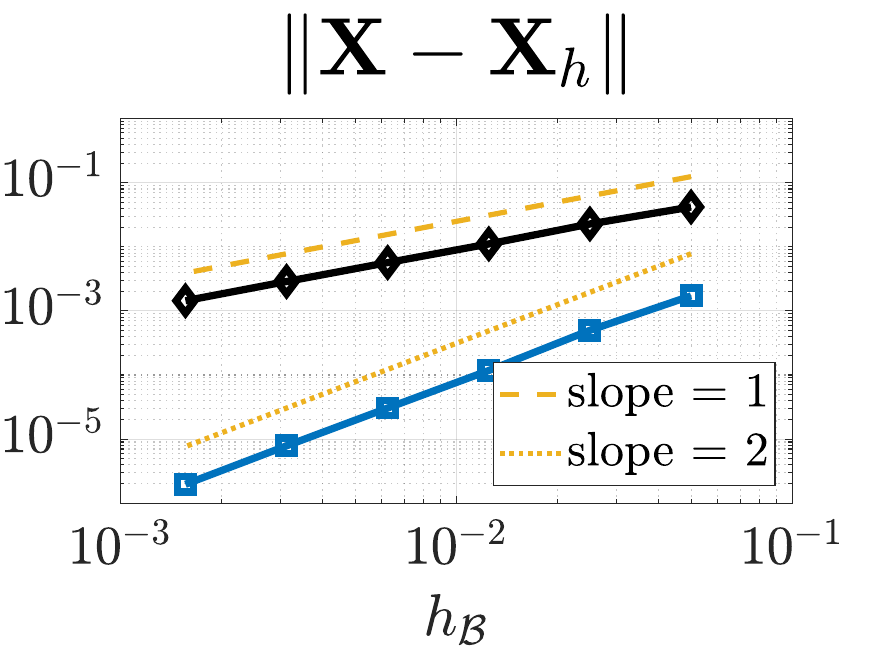}
		\includegraphics[width=0.24\linewidth]{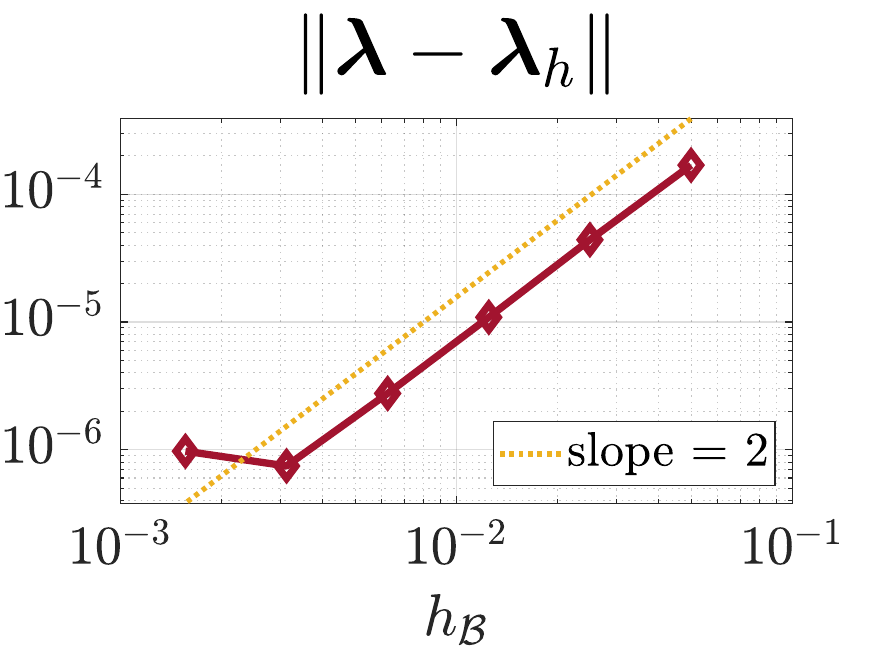}
		\includegraphics[width=0.425\linewidth]{figures_paper/legend_l2_2}
		
		\
		
		\includegraphics[width=0.24\linewidth]{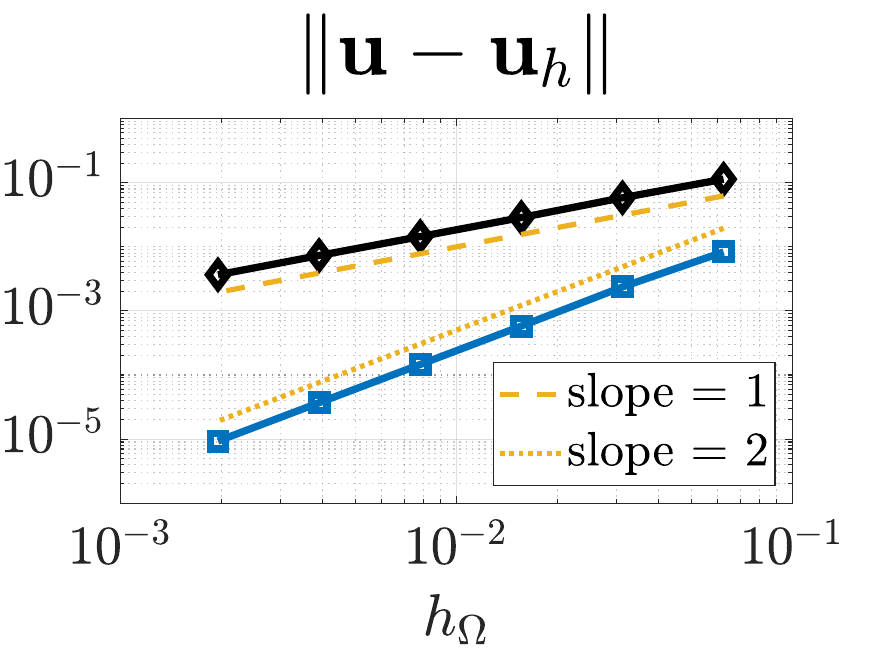}
		\includegraphics[width=0.24\linewidth]{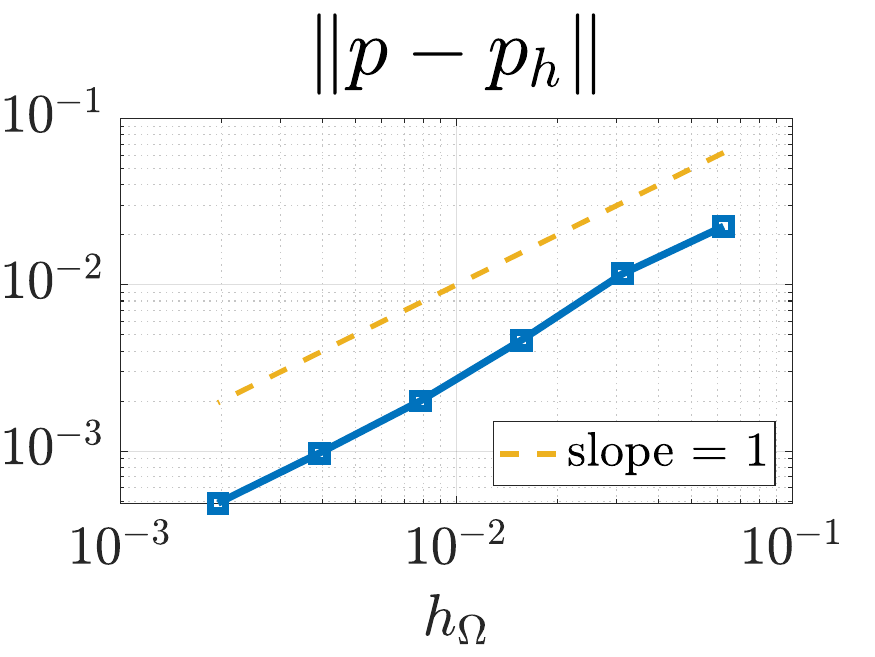}
		\includegraphics[width=0.24\linewidth]{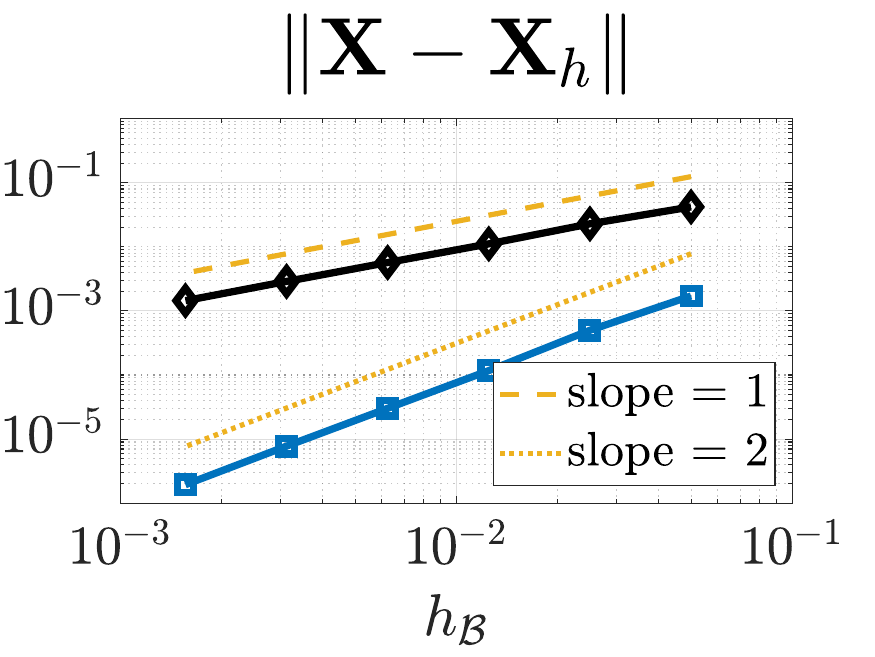}
		\includegraphics[width=0.24\linewidth]{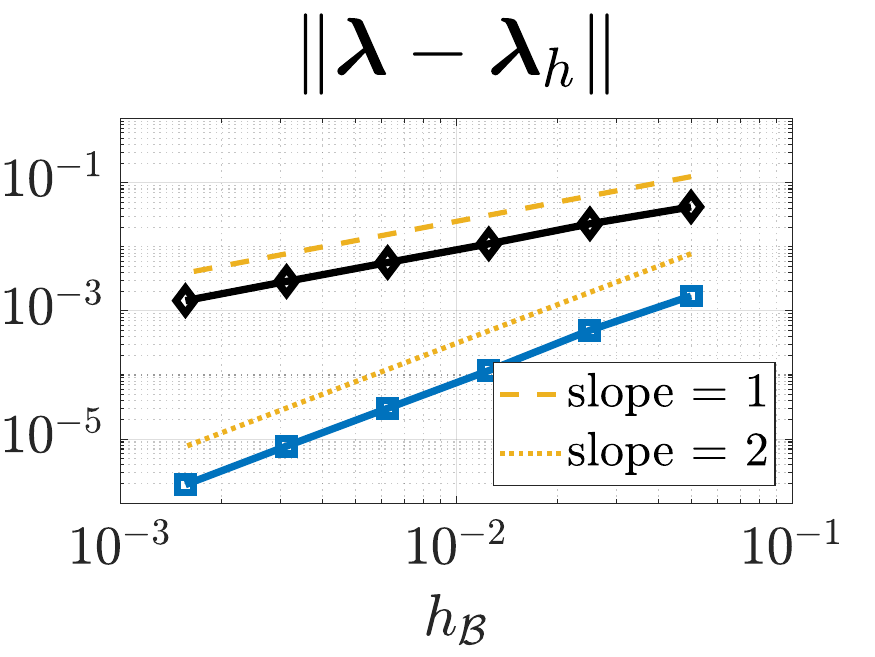}
		\includegraphics[width=0.25\linewidth]{figures_paper/legend_h1_2}
		\caption{Error convergence for the annulus test.}
		\label{fig:annulus}
	\end{figure}
	
	Concerning the disk example, we observe that $p$ is discontinuous along the interface, thus ${p\in H^{s}(\Omega)}$  with $0\le s<1/2$. Hence, we cannot expect optimal rate of convergence for this variable even if the coupling term is computed exactly.  In the other two cases, since the solution is regular, the method shows optimal convergence.
	The only critical case is given by the error estimate of the variable
	$\lambda$ in Figure~\ref{fig:annulus} that has a surprising behavior in the
	last level of refinement. This might be due to the ill-conditioning of the
	matrix: according to the proven estimate, the condition number is growing like
	$O(h^{-4})$ and in the last test we might have reached the critical value of
	$h$ so that the error stops decreasing.

	\subsection{Time dependent problem: the stretched annulus}
	\fc
	In this last subsection, we report some numerical results for the time dependent Problem~\ref{pro:problem_fictitious} of a stretched annulus immersed in a square fluid domain. We focus on the effect of small cut cells on the stability in time.
	
	Let $\Omega$ be the unit square and $\B=\{\s\in\R^2:\, 0.125\le|\s|\le0.25 \}$ the reference solid domain corresponding to the annulus at rest. 
	The physical parameters are densities $\rho_f=1.0$ and $\rho_s=1.1$, equal fluid and solid viscosities $\nu=0.01$, and elasticity constant $\kappa=0.2$. We assume that at $t=0$ the fluid is at rest, hence $\u_0=0$, while the annulus is stretched and its initial deformation is given by $\X_0 = \left({s_1}/{1.4}+0.5,1.4\,s_2+0.5\right)$. The internal elastic forces drive the annulus back to its resting position, thus generating the motion of the system, which is analyzed until $T=4$.
	
	The mesh for the pressure unknown, $\T_h^\Omega$, is obtained by dividing $\Omega$ into $N^2$ squares then split into two triangles, with $h_\Omega=1/N$. Consequently, the velocity unknown is approximated on $\T^\Omega_{h/2}$, see~\eqref{eq:element}. The grid $\T_h^\B$ is generated with \texttt{Gmsh}~\cite{geuzaine2009gmsh} in such a way that $h_\B\approx (4/3)h_\Omega$.
	We consider three configurations by setting $N=32\text{ (coarse)},\,64\text{ (medium)},\,128\text{ (fine)}$. For the time step, we choose $\dt=0.1,\,0.01$. For the coupling, we use $\c=\c_0$ computed exactly.
	
	In Figure~\ref{fig:snap}, some snapshots report the velocity contour lines, together with the position of the structure (top line) and the pressure profiles (bottom line). We plot the system configuration at time $t=0.5$, $t=1$ and $t=4$: this choice is due to the fact that the evolution of the immersed structure is faster at the beginning, while after $t=2$, it almost reaches the resting position. Moreover, since the pressure is discontinuous along the interface and it is discretized by continuous piecewise linear elements, the Gibbs phenomenon appears. Notice that the value of $p_h$ ranges in $(-0.05,0.05)$ at $t=0.5$, then it decreases to values in $(0,0.01)$ at $t=4$.
	
	In order to assess the unconditional stability with respect to the time step, we compute the system energy 
	\begin{equation*}
		\mathcal{E}(\u_h^n,\X_h^n) = \frac{\rho_f}{2}\|\u_h^n\|^2_{0,\Omega} + \frac{\dr}2\left\|\frac{\X_h^n-\X_h^{n-1}}{\dt}\right\|_{0,\B}^2+\frac\kappa 2|\X_h^n|_{1,\B}^2.
	\end{equation*}
	Figure~\ref{fig:energy} displays the behavior of the energy ratio $\mathcal{E}(\u_h^n,\X_h^n)/\mathcal{E}(\u_h^0,\X_h^0)$, computed by setting $\X_h^{-1}=0$. We observe that the energy is decreasing in the interval of interest and again that the evolution is faster for $t\in(0,2)$. After that, the resting position is almost reached. Essentially, the energy behaves independently of the considered meshes and time steps. It seems that the possible presence of small cut cells does not have any effect on the energy. In order to check if the computation of the coupling term actually produces small cut cells, we follow the evolution of a randomly chosen solid element. At each time step, the picked element is subdivided into several (polygonal) cells taking into account the intersections with the underlying velocity mesh. We evaluate the area of these sub-cells and we mark it in the semi-log scatter plots in Figure~\ref{fig:area}. We see that, for $\dt=0.1$, cells with area of the order $10^{-9}$ -- $10^{-10}$ appear for all meshes, while for $\dt=0.01$, the smallest cells can have an area ranging from $10^{-12}$ to $10^{-15}$, depending on the mesh refinement.
	
	\begin{figure}
		\centering
		
		\includegraphics[width=0.3\linewidth]{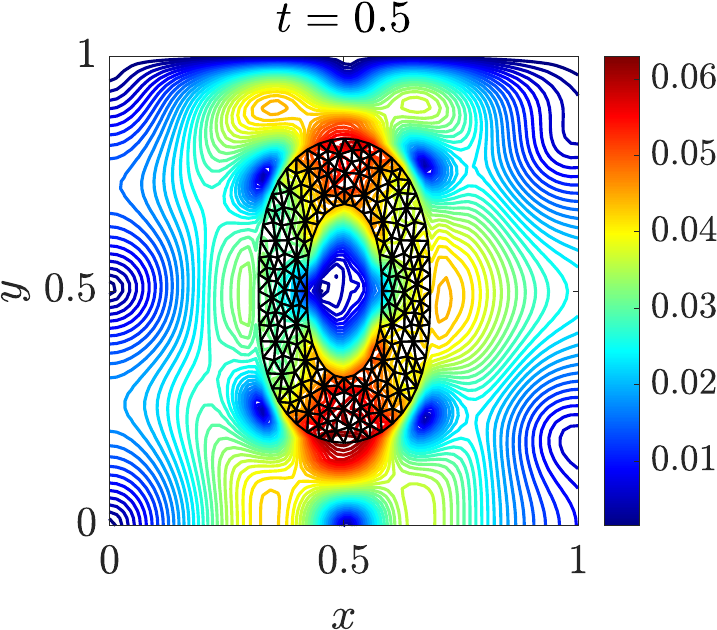}\quad
		\includegraphics[width=0.3\linewidth]{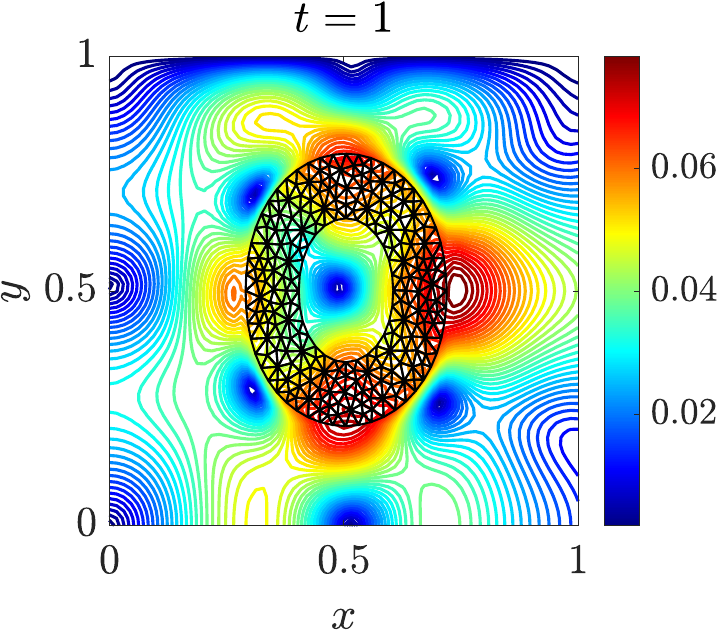}\quad
		\includegraphics[width=0.31\linewidth]{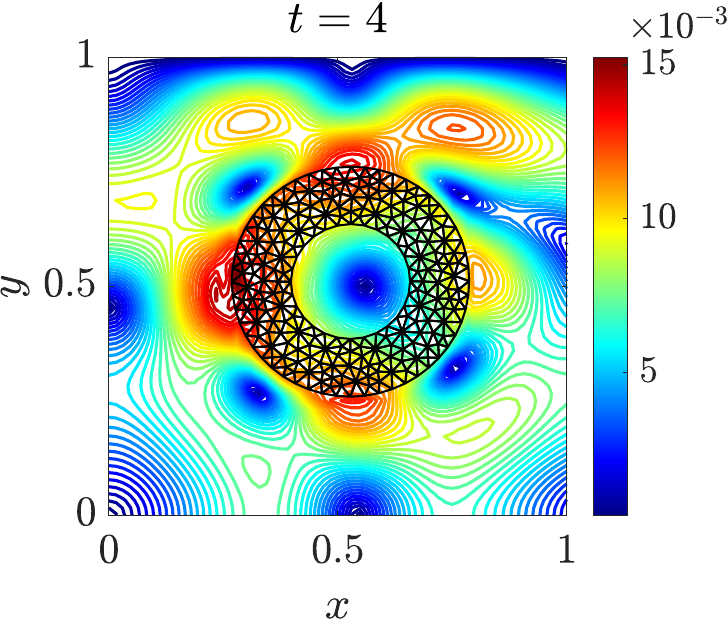}
		
		\includegraphics[width=0.3\linewidth]{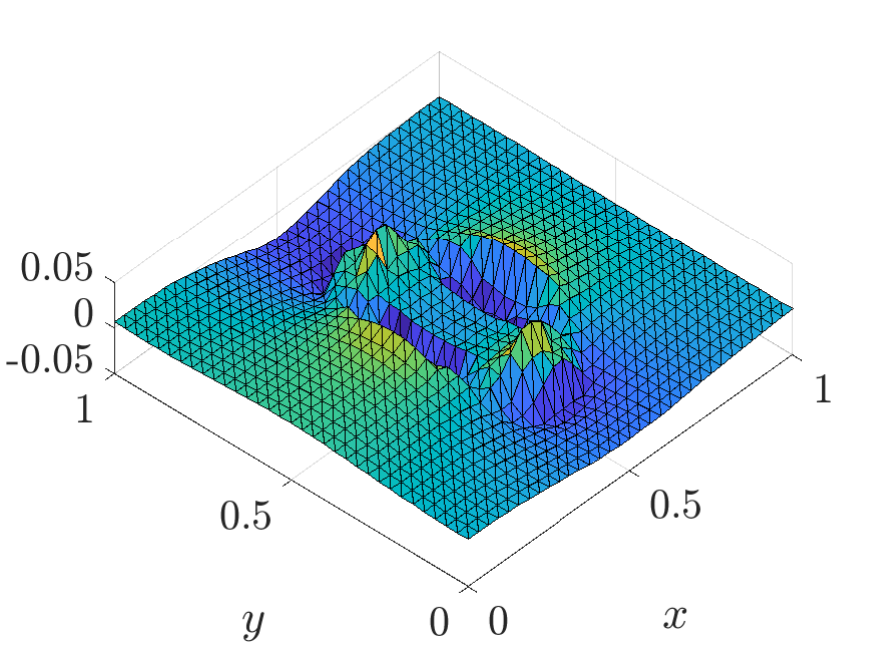}\quad
		\includegraphics[width=0.3\linewidth]{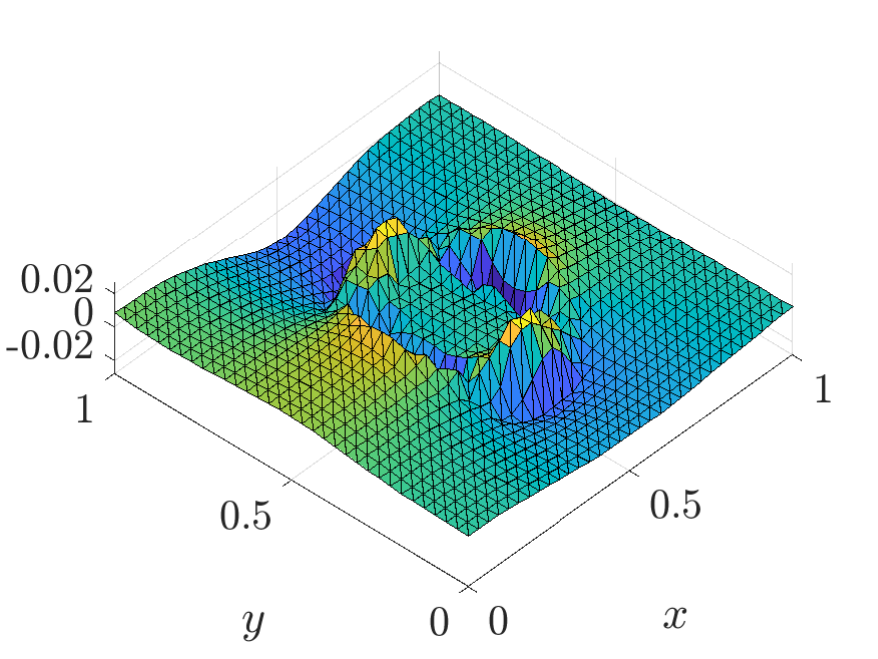}\quad
		\includegraphics[width=0.3\linewidth]{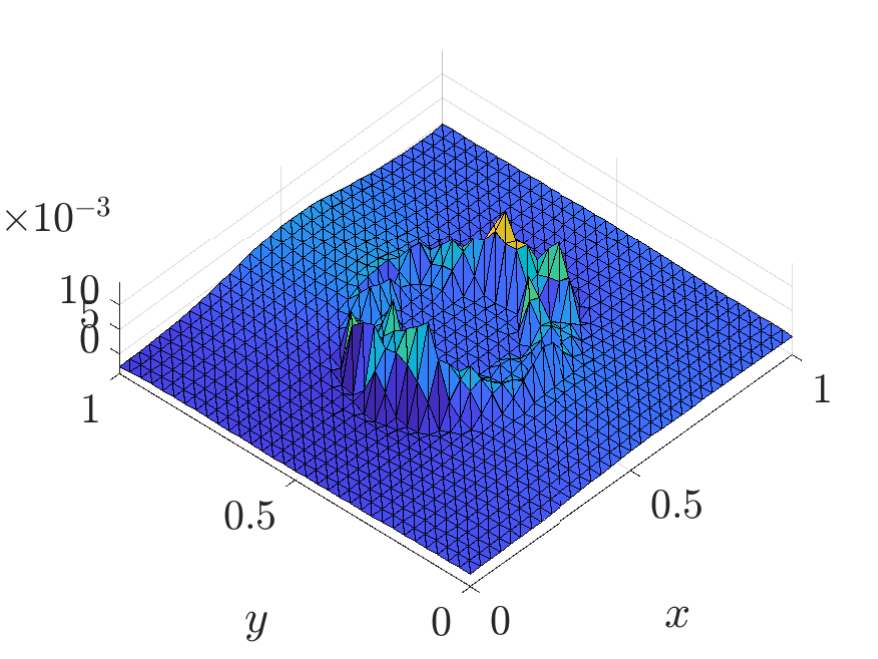}
		
		\caption{Snapshots of the annulus evolution (coarse test). Streamlines of $|\u_h|$ and structure position are reported on the top line, while the pressure field on the bottom line.}
		\label{fig:snap}
	\end{figure}
	
	\begin{figure}
		\centering
		\includegraphics[width=0.3\linewidth]{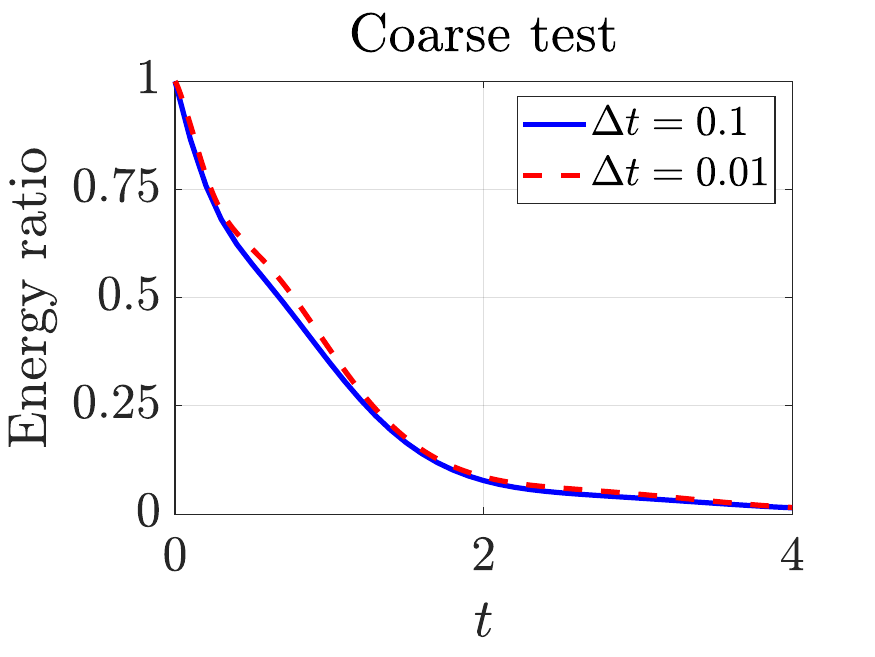}\quad
		\includegraphics[width=0.3\linewidth]{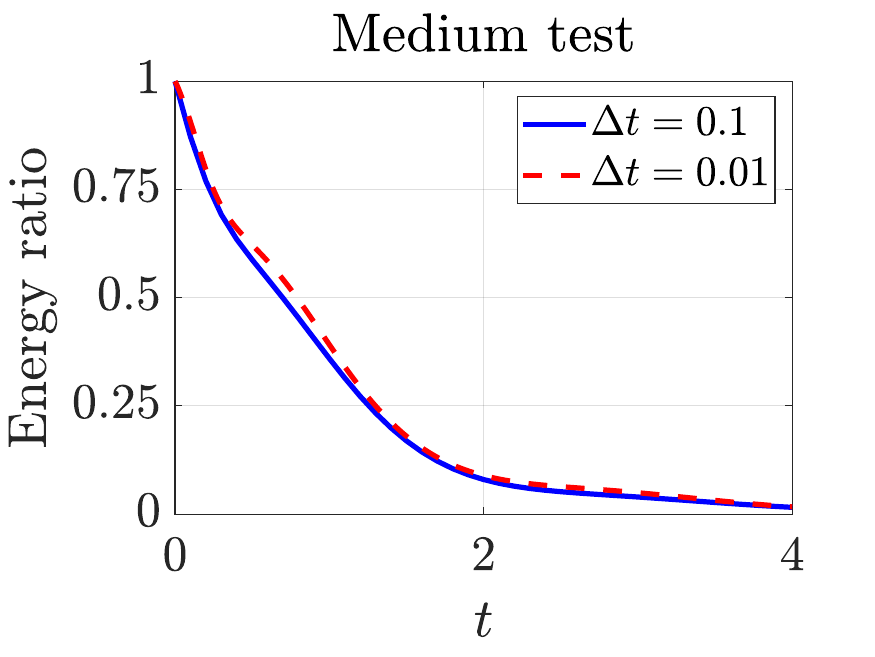}\quad
		\includegraphics[width=0.3\linewidth]{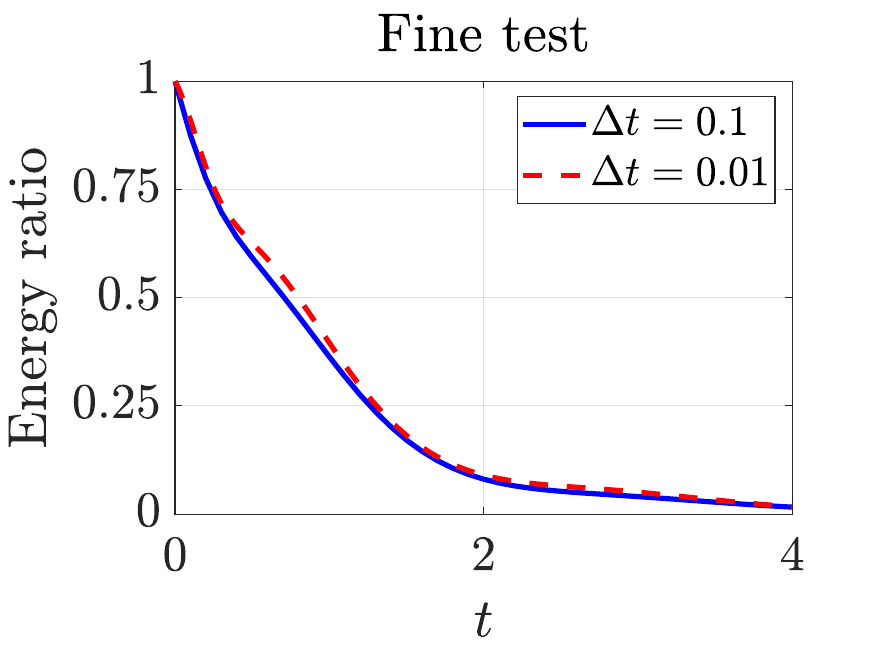}
		\caption{Energy ratio $\mathcal{E}(\u_h^n,\X_h^n)/\mathcal{E}(\u_h^0,\X_h^0)$ of the stretched annulus: evolution in time.}
		\label{fig:energy}
	\end{figure}

	\begin{figure}
		\includegraphics[width=0.3\linewidth]{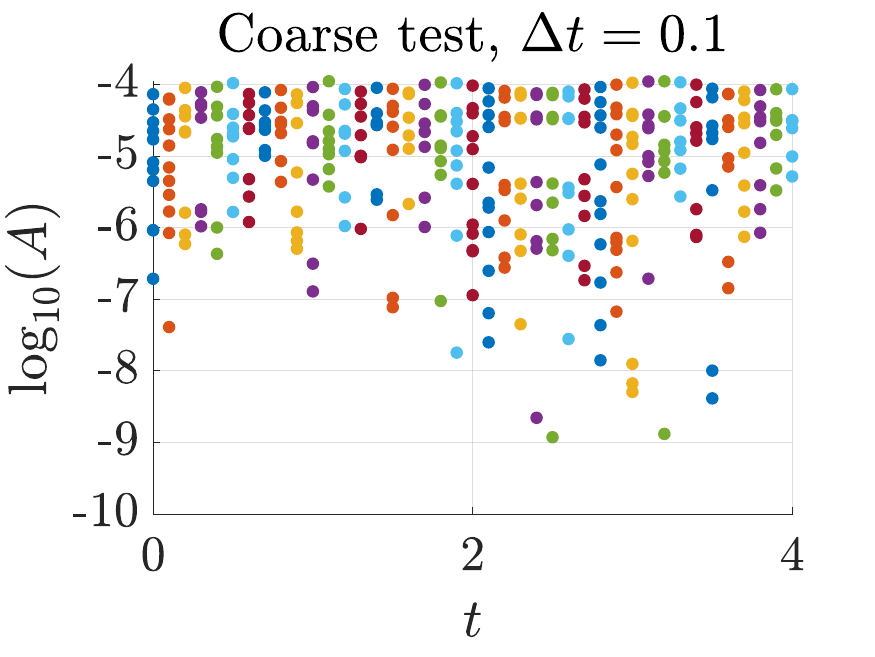}\qquad
		\includegraphics[width=0.3\linewidth]{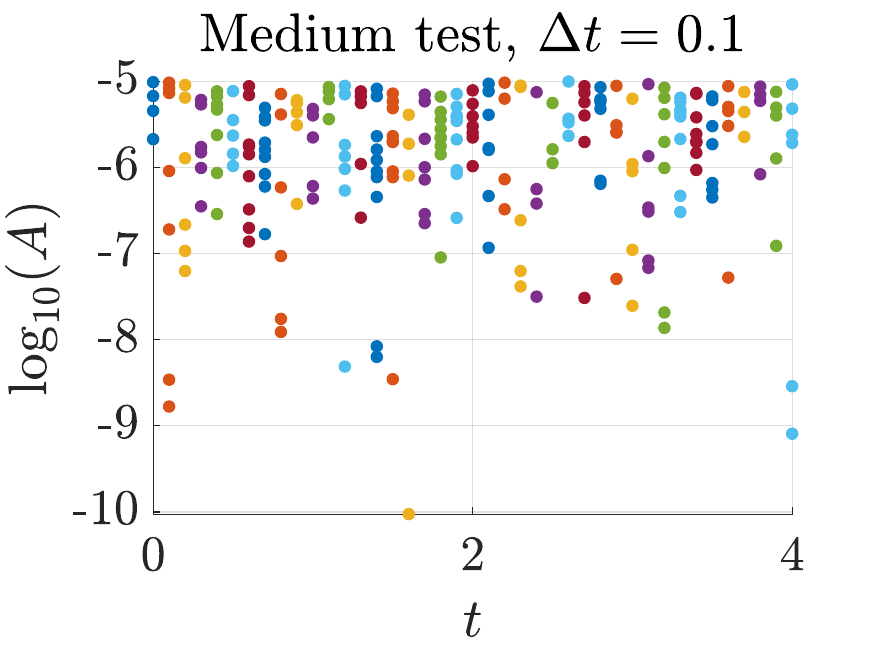}\qquad
		\includegraphics[width=0.3\linewidth]{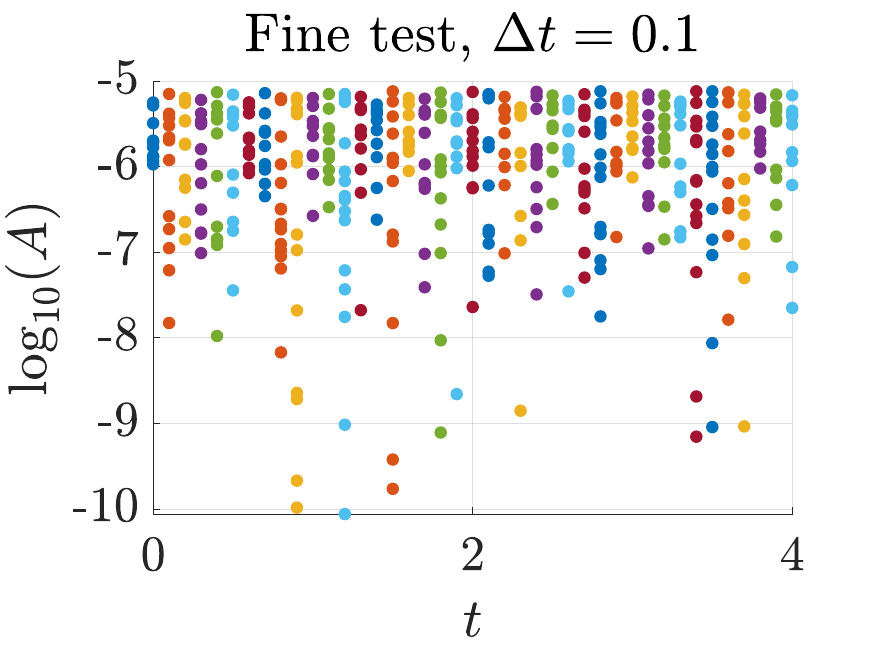}
		
		\
		
		\includegraphics[width=0.3\linewidth]{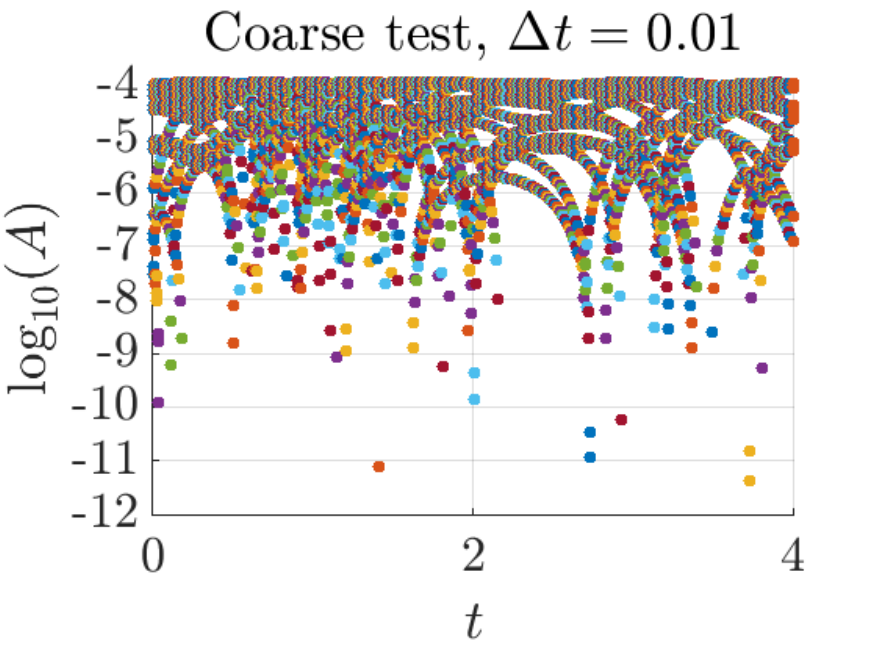}\qquad
		\includegraphics[width=0.3\linewidth]{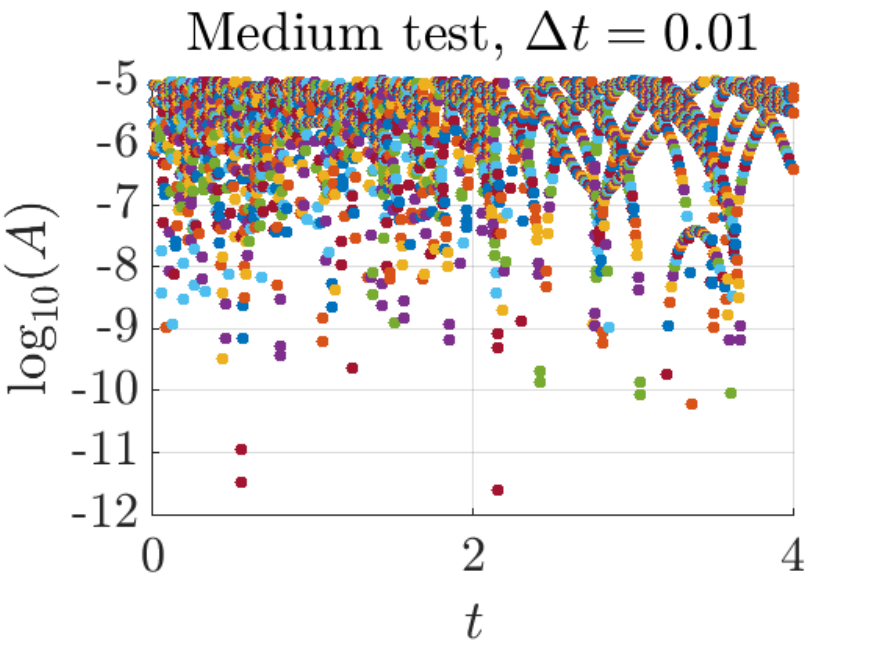}\qquad
		\includegraphics[width=0.3\linewidth]{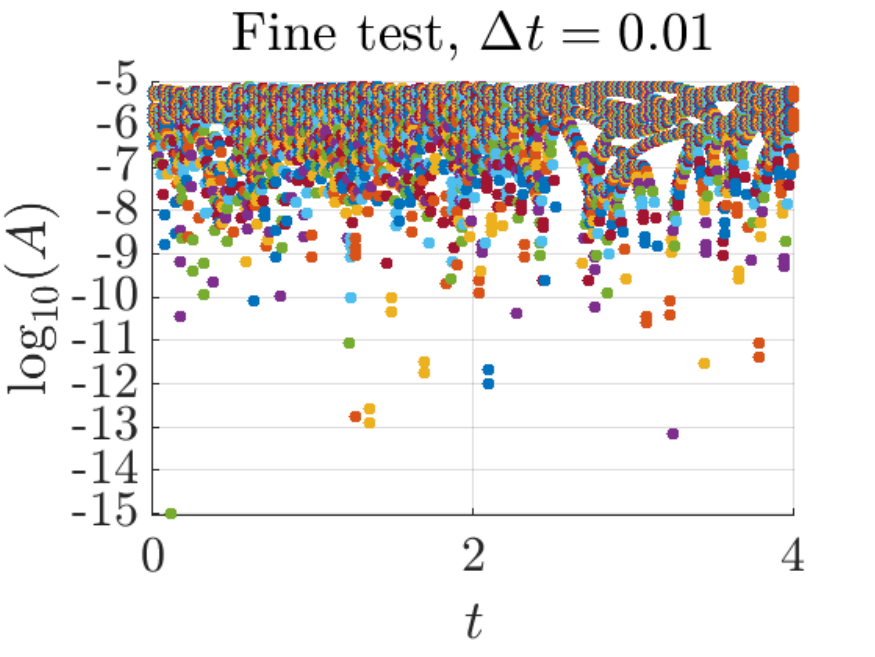}
	
		\caption{Area of the cut cells resulting from a fixed solid element. For each test, the solid element is randomly chosen and its evolution in time is studied.}
		\label{fig:area}
	\end{figure}
	\cf
	
	\section{Conclusion}\label{sec:conclusion}
	
	We considered the fictitious domain formulation for fluid-structure interaction problems introduced in~\cite{2015}. The approach uses a distributed Lagrange multiplier to enforce the kinematic condition and, consequently, to represent the interaction between fluid and solid. Thus, at discrete level, the so-called \textit{coupling term} is defined through functions constructed over non-matching grids. As extensively described in~\cite{boffi2022interface,BCG24}, the discrete coupling term can be constructed either exactly, i.e. by computing the intersection between the involved meshes, or in approximate way by accepting the presence of a quadrature error.
	
	After recalling the existing theoretical results concerning the unconditional stability in time and the well-posedness of the mixed finite element discretization, we emphasized that such results are independent of the position of the interface, hence not affected by the presence of small cut cells while assembling the coupling matrix. Our formulation is naturally stable without resorting to any penalization term.
	
	We then proved upper bounds for the condition number of the fluid-structure FEM system. To this aim, we made use of the theory provided by~\cite{ern-guermond}. We observed that the condition number depends on the choice of coupling term, but does not deteriorate in the presence of small cut cells. Several numerical tests confirmed our theoretical findings.
	
	\section*{Acknowledgments}
	The authors are member of INdAM Research group GNCS. The research of L. Gastaldi is partially supported by PRIN/MUR (grant No.20227K44ME) and IMATI/CNR.
	
	\bibliographystyle{abbrv}
	\bibliography{biblio}

\end{document}